\newcommand{\pH}{\operatorname{^pH}}
\newcommand{\wt}{\operatorname{wt}}
\newcommand{\conv}{\operatorname{conv}}
\newcommand{\Mod}{\operatorname{-mod}}
\newcommand{\Ind}{\operatorname{Ind}}
\newcommand{\Res}{\operatorname{Res}}
\newcommand{\supp}{\operatorname{supp}}
\newcommand{\ch}{\operatorname{ch}}
\newcommand{\Hom}{\operatorname{Hom}}
\newcommand{\Ext}{\operatorname{Ext}}
\newcommand{\rk}{\operatorname{rk}}
\newcommand{\g}{\mathfrak{g}}
\newcommand{\n}{\mathfrak{n}}
\newcommand{\h}{\mathfrak{h}}
\newcommand{\fl}{\mathfrak{l}}
\newcommand{\fs}{\mathfrak{s}}
\newcommand{\halpha}{\check{\alpha}}
\newcommand{\hpi}{\check{\pi}}
\newcommand{\R}{\ensuremath{\mathbb{R}}}
\newcommand{\C}{\ensuremath{\mathbb{C}}}
\newcommand{\Z}{\ensuremath{\mathbb{Z}}}
\newcommand{\lie}[1]{\ensuremath{\mathfrak{#1}}}
\newcommand{\OO}{\mathscr{O}}
\newcommand{\IC}{\mathscr{IC}}
\newcommand{\tu}{\tilde{U}}
\newcommand{\cV}{\mathscr{V}}
\newcommand{\cU}{\mathscr{U}}
\newcommand{\cP}{\mathscr{P}}
\newtheorem{theo}[equation]{Theorem}
\newtheorem{lem}[equation]{Lemma}
\newtheorem{cor}[equation]{Corollary}
\newtheorem{pro}[equation]{Proposition}
\theoremstyle{definition}
\newtheorem{re}[equation]{Remark}
\numberwithin{equation}{section}
\numberwithin{figure}{section}
\begin{document}
\title{The weights of simple modules in Category $\mathscr{O}$ for
Kac--Moody algebras}

\author{Gurbir Dhillon}
\address[G.~Dhillon]{Department of Mathematics, Yale University,
New Haven, CT 06511, USA}
\email{\tt gurbir.dhillon@yale.edu}

\author{Apoorva Khare}
\address[A.~Khare]{Department of Mathematics, Indian Institute of
Science; and Analysis and Probability Research Group; Bangalore --
560012, India}
\email{\tt khare@iisc.ac.in}

\date{\today}

\subjclass[2010]{Primary: 17B10; Secondary: 17B67, 22E47, 17B37, 20G42}

\keywords{Highest weight module, parabolic Verma module, integrable Weyl
group, Tits cone, Weyl--Kac formula, weight formula, Beilinson--Bernstein
localization, quantum group}

\begin{abstract}
We give the first positive formulas for the weights of every simple
highest weight module $L(\lambda)$ over an arbitrary Kac--Moody algebra.
Under a mild condition on the highest weight, we also express
the weights of $L(\lambda)$ as an alternating sum similar to the
Weyl--Kac character formula. 

To obtain these results, we show the following data attached to a highest
weight module are equivalent:
(i)~its integrability,
(ii)~the convex hull of its weights,
(iii)~the Weyl group symmetry of its character,
and (iv)~when a localization theorem is available, its behavior on
certain codimension one Schubert cells.
We further determine precisely when the above datum determines
the weights themselves.
Moreover, we use condition (iv)~to
relate localizations of the convex hull of the weights with the
introduction of poles of the corresponding $D$-module on certain
divisors, which answers a question of Brion.

Many of these results are new even in finite type. We prove similar
assertions for highest weight modules over a symmetrizable quantum group.

\end{abstract}
\maketitle

\settocdepth{section}
\tableofcontents

\section{Introduction}\label{Sintro}

A fundamental problem in the representation theory of semisimple and
Kac--Moody Lie algebras is to compute characters of simple highest weight
modules. In all cases where they are known even conjecturally, the
solutions proceed roughly by (i)~expressing simple characters as linear
combinations of Verma characters. The relevant block of Category $\OO$ is
then (ii)~identified as a categorification of a module for the Hecke
algebra. The two bases of simple and Verma characters correspond to two
canonical bases for the module, whose change of basis is determined via
the combinatorics of the Coxeter system.  

The subtlety in steps (i) and (ii) is already apparent for regular
integral blocks for affine Lie algebras. At noncritical levels, as in
finite type, one takes {\em bona fide} Verma modules, and the relevant
Hecke module is the regular representation. At the critical level, one
replaces each Verma module with a quotient, the restricted Verma module,
and the desired Hecke module is the periodic module. To our knowledge,
there is not even a conjectural description of critical blocks beyond
affine type along the lines of (i) and (ii).

Another issue of longstanding interest is to obtain manifestly positive
formulas for simple characters. I.e., in step (i) the coefficients of
Verma characters generically come with signs, as visible in the Weyl--Kac
and Kazhdan--Lusztig character formulas, and one would like alternative
formulae for simple characters without this issue. This problem is solved
via crystal bases for regular dominant and regular anti-dominant highest
weights, but is wide open in general. 

With these motivations, in this paper we address the simpler question of
determining not the character of a simple highest weight module but
rather its weights. I.e., one can ask for the eigenvalues of the Cartan's
action, but not the exact multiplicities. We give multiple formulas for
weights of an arbitrary simple highest weight module for any Kac--Moody
algebra. In light of the above discussion, we emphasize first that the
formulas hold in the cases where there are no predictions for characters
in the spirit of (i) and (ii), and moreover are remarkably uniform across
different levels.  We emphasize second that the formulas are visibly
positive. 

After conversations with experts, it seems the formulas described in this
paper likely did not appear earlier, even as folklore. This is rather
remarkable, considering both the accessibility and naturality of the
question and the transparency of its solution. 


\section{Statements of results}\label{st}

Throughout the paper, unless otherwise specified $\g$ is a Kac--Moody
algebra over $\mathbb{C}$ with fixed triangular decomposition $\g = \n^-
\oplus \h \oplus \n^+$. We will write $I$ for the vertices of its Dynkin
diagram, and denote its simple roots, simple coroots, and the simple
reflections in its Weyl group $W$ by 
\[
\alpha_i, \quad \halpha_i, \quad s_i, \quad \quad \text{for } i \in I,
\]
respectively. We will denote by $V$ a $\g$-module of highest weight
$\lambda \in \h^*$, and by $L(\lambda)$ the simple module with highest
weight $\lambda$.

\subsection{Three positive formulas for the weights of simple
modules}\label{Sbump-results} 

If for $\mu \in \h^*$ we consider the corresponding weight space of
$L(\lambda)$, namely
\[
L(\lambda)_\mu := \{ v \in L(\lambda): hv = \langle \lambda, h \rangle v
\text{ for all } h \in \h \},
\]
then our main result is several descriptions of the {set of}
nonzero weight spaces of $L(\lambda)$, i.e.,
\begin{equation}\label{e:wts}
\wt L(\lambda) := \{ \mu \in \h^*: L(\lambda)_\mu \neq 0 \}.
\end{equation}
In every case when the character of $L(\lambda)$ is known, an essential
role is played by a subgroup of $W$ known as the integral Weyl group. To
determine the weights of $L(\lambda)$, perhaps surprisingly we will need
much less, namely the intersection of the integral Weyl group with the
simple reflections. Accordingly, consider the subset of $I$ given by 
\begin{equation}
I_\lambda := \{ i \in I: \langle \halpha_i, \lambda \rangle \in
\mathbb{Z} \}.
\end{equation}

\subsubsection{}

Our first formula uses the restriction of $L(\lambda)$ to the maximal
standard Levi subalgebra whose action is integrable. To state it, write
$\fl_\lambda$ for the standard Levi subalgebra of $\g$ corresponding to
$I_\lambda$. For $\nu \in \h^*$, write $L_{\hspace{.5mm}
\fl_\lambda}(\nu)$ for the simple $\fl_\lambda$-module of highest weight
$\nu$, and as in \eqref{e:wts} write $\wt L_{\hspace{.5mm}
\fl_\lambda}(\nu)$ for its set of nonzero weights. 

\begin{pro}\label{p1}
{For all $\lambda \in \h^*$, we}
have the equality 
	\begin{equation}
	\wt L(\lambda) = \bigsqcup_{\mu \in \Z^{\geqslant 0} \{ \alpha_i:
	\hspace{1mm} i \in I \setminus I_\lambda \}} \wt
	L_{\mathfrak{l}_\lambda}(\lambda - \mu).
	\end{equation}
\end{pro}

\noindent In words, the proposition asserts that (i)~for every positive
integral combination $\mu$ of the simple roots not indexed by
$I_\lambda$, the weight space $L(\lambda)_{\lambda - \mu}$ is nonzero.
After restriction to $\fl_\lambda$, such a weight space consists of
highest weight vectors, and the proposition implies that (ii)~all the
weights of $L(\lambda)$ may be obtained as union of those of the
$L_{\hspace{.5mm} \fl_\lambda}(\lambda - \mu)$. The latter are integrable
$\fl_\lambda$-modules, and hence their weights are well known, cf.\
Proposition~\ref{nonde} below.

\subsubsection{}

Our second formula shows the relationship between $\wt L(\lambda)$ and
its convex hull, which we denote by $\conv L(\lambda)$. To state it,
recall the standard partial order $\leqslant$ on $\h^*$,
where $\mu \leqslant \lambda$ if $\lambda - \mu$ is a (possibly empty)
sum of simple roots.

\begin{pro}\label{p3}
{For all $\lambda \in \h^*$, we}
have the equality
	\begin{equation} \label{e:p3}
	\wt L(\lambda) = \conv L(\lambda) \cap \{\mu \in \h^*: \mu
	\leqslant \lambda \}.
	\end{equation} 
\end{pro}

\noindent In words, the proposition asserts that the left-hand side of
Equation \eqref{e:p3}, which necessarily lies in the intersection of the
two terms of the right-hand side, in fact coincides with it. The question
of whether Proposition~\ref{p3} holds, i.e., whether the weights of
simple highest weight modules are no finer an invariant than their convex
hull, was brought to our attention by Daniel Bump~\cite{Bump}. 

Moreover, we give an explicit description of $\conv L(\lambda)$, and more
generally of the convex hull of the weights of any highest weight module,
in Proposition~\ref{ray} below. 

\subsubsection{}

Our third formula describes the weights of $L(\lambda)$ using the group
{$W_{I_\lambda}$} generated by the simple reflections $s_i,$ for $i \in
I_\lambda$, i.e., the Weyl group of $\fl_\lambda$. We will also need the
dominant integral weights of $\fl_\lambda$, namely
\[
P^+_\lambda := \{ \mu \in \h^*: \langle \mu, \halpha_i \rangle \in
\mathbb{Z}^{\geqslant 0}, i \in I_\lambda \}.
\]
\begin{pro}\label{p2}
	If $\lambda$ has finite stabilizer in $W_{I_\lambda}$, then we
	have the equality
	\begin{equation} \label{e:p2}
	\wt L(\lambda) = {W_{I_\lambda}} \{ \mu \in P^+_\lambda: \mu
	\leqslant \lambda \}.
	\end{equation}
\end{pro}

\noindent We remind that the assumption that $\lambda$ has finite
stabilizer is very mild, as recalled in Proposition \ref{tits}(4). In
words, the proposition states that the left-hand side of \eqref{e:p2},
which necessarily lies in the right-hand side by the integrability of
$L(\lambda)$ as a $\fl_\lambda$-module, coincides with it.

\subsubsection{}

Propositions \ref{p1}, \ref{p3}, and \ref{p2} are well known for
integrable $L(\lambda)$, and Propositions \ref{p1} and \ref{p3} were
proved by the second named author in finite type \cite{Kh1}. The
remaining cases, in particular Proposition \ref{p2} in all types, are to
our knowledge new. 

Propositions \ref{p1}, \ref{p3}, and \ref{p2} are particularly striking
in infinite type. When specialized to affine Kac--Moody algebras, the
formulae are insensitive to whether the highest weight is critical or
non-critical. In contrast, critical level modules behave very differently
from noncritical modules, even at the level of characters. For
symmetrizable Kac--Moody algebras, we similarly obtain weight formulae
for highest weights on the critical hyperplanes. The authors are unaware
of even conjectural formulae for the simple characters in this case.
Finally, for non-symmetrizable Kac--Moody algebras, we recall that it is
unknown how to compute weight space multiplicities even for integrable
$L(\lambda)$. 

\subsection{Three invariants of highest weight modules}

The main input into our three formulas presented above is the equivalence
of three invariants of general highest weight modules. In what follows,
we denote the simple lowering operators by $f_i,$ for  $i \in I$.

\begin{theo}\label{maintheo}
The following invariants of a highest weight module $V$ determine one
another.
\begin{enumerate}
\item The {\em integrability} of $V$, i.e. the subset of the vertices of
the Dynkin diagram defined as
\[
I_V := \{i \in I: f_i \text{ acts locally nilpotently on } V \}.
\]

\item The convex hull of the weights of $V$.

\item The stabilizer of the character of $V$ in the Weyl group $W$.
\end{enumerate}

\noindent Explicitly, the convex hull in~(2) is always that of the
parabolic Verma module $M(\lambda, I_V)$ (cf. Equation \eqref{Epvm}
below), and the stabilizer in~(3) is always the parabolic subgroup
$W_{I_V}$. 
\end{theo}

\noindent To our knowledge, the equivalences of Theorem \ref{maintheo}
are new even for $\g$ of finite type. Moreover, when a
localization theorem is available, we describe an equivalent geometric
invariant of the perverse sheaf corresponding to $V$ in Proposition
\ref{gint} below. 

In the proof of Theorem \ref{maintheo}, the difficult implication is that
the natural map 
\[
M(\lambda, I_V) \otimes V_\lambda \rightarrow V,
\]
where $V_\lambda$ is the highest weight line of $V$, induces
an equality on convex hulls of weights.

For a general highest weight module $V$, its set of weights is a finer
invariant than the datum of Theorem \ref{maintheo}. For example, for $\g
= \mathfrak{sl}_2 \oplus \mathfrak{sl}_2$, the highest weight modules
$M(0)$ and $V = M(0)/M(s_1s_2 \cdot 0)$, where $s_1, s_2$ are the simple
reflections and $\cdot$ denotes the dot action, have different sets of
weights but the same convex hull. In the following theorem, we determine
precisely when the two invariants are equivalent. 

To state it, we consider the {\em potential integrability} of $V$, i.e.
the subset of the vertices of the Dynkin diagram given by
\[
 I_\lambda \setminus I_V.
\]
Note these index the simple roots for which the corresponding
$\mathfrak{sl}_2$ action is integrable on a nonzero quotient of $V$. The
following theorem shows that the phenomenon of the above example, namely
the potential integrability contains orthogonal roots, is the only
obstruction to $\conv V$ and $\wt V$ being distinct invariants. 

\begin{theo}\label{Tnoholes}
Fix $\lambda \in \h^*$ and $\mathring{I} \subset I_\lambda$. Every
highest weight module $V$ of highest weight $\lambda$ and integrability
$\mathring{I}$ has the same weights if and only if $I_\lambda \setminus
\mathring{I}$ is complete, i.e.,
\[
\langle \halpha_i, \alpha_j \rangle \neq 0, \quad \text{for all } i,j \in
I_\lambda \setminus \mathring{I}.
\]
\end{theo}

The question of whether Theorem \ref{Tnoholes} holds was brought to our
attention by James Lepowsky \cite{Lepowsky}. We are not aware of a
precursor to Theorem \ref{Tnoholes} in the literature, even in finite
type.


\subsection{An alternating formula for the weights of simple
modules}\label{S251}

We now discuss an alternating formula for the weights of $L(\lambda)$. We
will first write the formula, and then explain the appearing notation. 

\begin{theo}\label{wwks}
For $\lambda \in \h^*$ such that the stabilizer of $\lambda$ in
{$W_{I_\lambda}$} is finite, we have:
\begin{equation}\label{wks}
\wt L(\lambda) = \sum_{w \in {W_{I_\lambda}} } w
\frac{e^\lambda}{\prod_{i \in I} (1 - e^{-\alpha_i})}.
\end{equation}
\end{theo}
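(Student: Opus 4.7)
The plan is to reduce to the \emph{integrable case} of the formula itself, applied to the Kac--Moody subalgebra $\fl := \fl_J$ where $J := I_{L(\lambda)}$. Once that case is in hand, the general statement follows from Proposition \ref{p1} and a formal power-series manipulation.

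First, I would apply Proposition \ref{p1} to decompose
\begin{equation*}
\wt L(\lambda) = \bigsqcup_{\nu \in \Z^{\geqslant 0}(\pi \setminus \pi_J)} \wt L_\fl(\lambda - \nu),
\end{equation*}
and verify that each $\lambda - \nu$ is dominant integral for $J$ with finite stabilizer in $W_J$. Dominance follows from the fact that off-diagonal Cartan entries $(\alpha_i, \halpha_j)$ are $\leqslant 0$: for $j \in J$, $(\lambda - \nu, \halpha_j) = (\lambda, \halpha_j) - (\nu, \halpha_j) \geqslant (\lambda, \halpha_j) \geqslant 0$. For the stabilizer, if $s_j \in \St_{W_J}(\lambda - \nu)$ then the two non-negative summands in this decomposition must both vanish, so $s_j \in \St_{W_J}(\lambda)$; hence $\St_{W_J}(\lambda - \nu) \subseteq \St_{W_J}(\lambda)$, which is finite by hypothesis.

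Granting the integrable case of the theorem applied to each $L_\fl(\lambda - \nu)$, namely
\begin{equation*}
\sum_{\eta \in \wt L_\fl(\lambda - \nu)} e^\eta = \sum_{w \in W_J} w \,\frac{e^{\lambda - \nu}}{\prod_{j \in J}(1 - e^{-\alpha_j})},
\end{equation*}
I sum over $\nu$ and interchange the order of summation:
\begin{align*}
\sum_{\eta \in \wt L(\lambda)} e^\eta
&= \sum_\nu \sum_{w \in W_J} w \,\frac{e^{\lambda - \nu}}{\prod_{j \in J}(1 - e^{-\alpha_j})} \\
&= \sum_{w \in W_J} w \left(\frac{e^\lambda}{\prod_{j \in J}(1 - e^{-\alpha_j})} \cdot \prod_{i \in I \setminus J}(1 - e^{-\alpha_i})^{-1}\right) \\
&= \sum_{w \in W_J} w \,\frac{e^\lambda}{\prod_{i \in I}(1 - e^{-\alpha_i})}.
\end{align*}
The crucial compatibility here is that $W_J$ preserves $\Delta^+ \setminus \Delta^+_J$: for any $w \in W_J$ and $i \in I \setminus J$ the root $w\alpha_i$ is positive, so the expansion $\prod_{i \notin J}(1 - e^{-w\alpha_i})^{-1} = \sum_{\nu \in \Z^{\geqslant 0}(\pi \setminus \pi_J)} e^{-w\nu}$ is well-defined in the same completion used for the Weyl--Kac formula.

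The main obstacle is the integrable case itself: for a dominant integral $\mu$ with finite stabilizer in $W$,
\begin{equation*}
\sum_{\eta \in \wt L(\mu)} e^\eta = \sum_{w \in W} w \,\frac{e^\mu}{\prod_{i \in I}(1 - e^{-\alpha_i})}.
\end{equation*}
Both sides are $W$-invariant, so one may compare coefficients on the fundamental domain, where the left-hand side is the indicator of $\{\eta \in P^+ : \eta \leqslant \mu\}$ by Proposition \ref{p2}. In the expansion of each summand $w \,\frac{e^\mu}{\prod_i(1 - e^{-\alpha_i})}$, the factors for which $w\alpha_i$ is a negative root contribute signs, and the required identity of coefficients then reduces to a Weyl-group cancellation analogous in spirit to the proof of the Weyl--Kac denominator formula; this cancellation is the technical heart of the argument.
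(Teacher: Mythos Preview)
Your proposal is correct and takes essentially the same approach as the paper: the paper likewise reduces to the Integrable Slice Decomposition (your Proposition~\ref{p1}, equivalently the parabolic Verma version via $\wt L(\lambda) = \wt M(\lambda, I_{L(\lambda)})$), performs the identical formal manipulation pulling the factor $\prod_{i \notin J}(1-e^{-\alpha_i})^{-1}$ inside the $W_J$-sum, and invokes the integrable case. The only difference is that the paper simply cites the integrable case as an existing result of Kass~\cite{Kass} rather than sketching its proof, so you need not treat it as an obstacle; your explicit verification that each $\lambda - \nu$ has finite $W_J$-stabilizer (via Proposition~\ref{tits}(1) and the containment $\St_{W_J}(\lambda - \nu) \subset \St_{W_J}(\lambda)$) is a detail the paper leaves implicit.
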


\noindent On the left-hand side of \eqref{wks} we mean the
`multiplicity-free'
character 
\[
\sum_{\mu \in \h^*: L(\lambda)_\mu \neq 0} e^\mu.
\]
On the right-hand side of \eqref{wks}, in each summand we take the
`highest weight' expansion of $w(1 - e^{-\alpha_i})^{-1}$, i.e.:
\[
w \frac{1}{1 - e^{-\alpha_i}} := \begin{cases}
1 + e^{-w \alpha_i} + e^{-2 w \alpha_i} + \cdots, & \text{ if } w
\alpha_i > 0, \\
- e^{ w\alpha_i} - e^{2w \alpha_i} - e^{3w\alpha_i} - \cdots, & \text{ if
} w \alpha_i < 0.
\end{cases}
\]

We now make three qualitative comments on the appearance of \eqref{wks}.
First, as claimed in the introduction, each summand comes with signs but
no multiplicities as in Kazhdan--Lusztig theory. Second, we note that the
denominator in \eqref{wks} is not the usual Weyl denominator, but instead
a multiplicity-free variant which runs only over the simple roots, i.e.
replaces $\n$ by $\n/[\n, \n]$. Finally, the group
{$W_{I_\lambda}$} which indexes the sum is a parabolic
subgroup of $W$, rather than the integral Weyl group which appears in the
known cases of the characters of simple highest weight modules. 

Theorem \ref{wwks} was previously known for integrable modules by work of
Kass \cite{Kass}, Brion \cite{Brion}, Walton \cite{Walton2}, Postnikov
\cite{Postnikov}, and Sch\"utzer \cite{Sc}, where it closely resembles
the usual Weyl--Kac character formula \cite{Weyl}. As pointed out to us
by Michel Brion~\cite{Brion2}, in finite type Theorem \ref{wwks} follows
from a more general formula for exponential sums over polyhedra, cf.\
Remark~\ref{otherone} below. All other cases are to our knowledge new.

\subsection{A geometric interpretation of integrability}\label{Sbrion2}

In this section we use a topological manifestation of the datum in
Theorem \ref{maintheo} to answer a question brought to our attention by
Brion~\cite{Brion2} in geometric representation theory. 

To pose {the question}, let $\g$ be of finite type, $G$ the
corresponding simply-connected algebraic group, and $B$ the subgroup with
Lie algebra $\h \oplus \n^+$. Fix $\lambda$ a regular dominant integral
weight, and consider $\conv L(\lambda)$, i.e., the Weyl polytope. For any
$\mathring{I} \subset I$, if one intersects the tangent cones of $\conv
L(\lambda)$ at the vertices 
\[
w \lambda, \quad \text{for } w \in W_{\mathring{I}},
\]
one obtains the convex hull of $M(\lambda, \mathring{I})$. Brion first
asked whether an analogous formula holds for other highest weight
modules. We answer this question affirmatively in the companion paper
\cite{DK2}. 

Brion further observed that this procedure of localizing $\conv
L(\lambda)$ in convex geometry is the shadow of a localization in complex
geometry. More precisely, consider the flag variety $G/B$, and let
$\mathscr{L}_\lambda$ denote the line bundle with 
\[
H^0(G/B,
\mathscr{L}_\lambda) \simeq L(\lambda).
\]
As usual, for any $w \in W$ write $X_w$ for the closure of the Bruhat
cell $BwB/B$, and write $w_\circ$ for the longest element of $W$. Let
$\mathbb{D}$ denote the standard dualities on Category $\mathscr{O}$ and
regular holonomic $D$-modules, i.e. the contragredient and Verdier
dualities, respectively. Then for a union of Schubert divisors 
\[
Z = \bigcup_{i \in I \setminus \mathring{I}}
X_{s_i w_\circ}
\]
with complement $U$, we have 
\[
H^0(U, \mathscr{L}_\lambda) \simeq \mathbb{D} M(\lambda, \mathring{I}).
\]
Thus in this case localization of the convex hull could be recovered as
taking the convex hull of the weights of sections of
$\mathscr{L}_\lambda$ on an appropriate open set $U$. Brion raised in
\cite{Brion2} the question of whether a similar result holds for more
general highest weight modules. 

We answer this affirmatively for a regular integral infinitesimal
character. By translation, it suffices to examine the regular block
$\mathscr{O}_0$.

\begin{theo}\label{brgg}
Let $\g$ be of finite type, and $\lambda = w \cdot - 2\rho$. Let $V$ be a
$\g$-module of highest weight $\lambda$, and write $\cV$ for the
corresponding $D$-module on $G/B$. For $\mathring{I} \subset I_V$, set 
\[ Z = \bigcup_{i
\in I_V \setminus \mathring{I}}  X_{s_i w},\]
and write $U$ for its open complement in $G/B$. Then the $\g$-module
\[\mathbb{D} H^0(U, \mathbb{D}\cV)\]
has highest weight $\lambda$ and integrability $\mathring{I}$.
\end{theo}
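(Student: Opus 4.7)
The plan is to translate the formation of $\mathbb{D} H^0(\tu, \mathbb{D}\cV)$ into a $D$-module operation on $X$, show the resulting module is highest weight of weight $\lambda$, and then read off its integrability using Theorem~\ref{maintheo}(4). Throughout let $j\colon \tu \hookrightarrow X$ denote the open inclusion; since $Z$ is a union of divisors, $j$ is affine, so both $j_!$ and $j_*$ are exact on regular holonomic $D$-modules. I will also use that at the regular integral infinitesimal character under consideration, $\Gamma$ is an exact equivalence between the appropriately twisted $D$-modules on $X$ and the block of $\mathcal{O}$ containing $\lambda$.

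Using that $\mathbb{D}$ on $\mathcal{O}_0$ corresponds to Verdier duality under this equivalence ($\mathbb{D}\Gamma \simeq \Gamma\mathbb{D}$), together with the standard identities $\mathbb{D} j_* \simeq j_! \mathbb{D}$ and $\mathbb{D} j^* \simeq j^* \mathbb{D}$ for open embeddings, the first step is to identify
\[
\mathbb{D}H^0(\tu, \mathbb{D}\cV) \simeq \mathbb{D}\Gamma(X, j_* j^* \mathbb{D}\cV) \simeq \Gamma(X, j_! j^* \cV).
\]
So it remains to analyze $\Gamma(j_! j^* \cV)$.

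For the highest weight claim, I would choose a presentation $0 \to \mathcal{K} \to j_{w,!}\mathcal{L}_\lambda \to \cV \to 0$, where $j_{w,!}\mathcal{L}_\lambda$ is the $D$-module localizing $M(\lambda)$. Since $C_w$ is the unique open stratum of $X_w$, it meets no $X_{s_i w}$ and hence lies in $\tu$, which gives $j_! j^* (j_{w,!}\mathcal{L}_\lambda) \simeq j_{w,!}\mathcal{L}_\lambda$. Applying the exact functor $j_! j^*$ followed by $\Gamma$ then produces a surjection $M(\lambda) \twoheadrightarrow \Gamma(j_!j^*\cV)$; the target is nonzero because $\cV|_{C_w} \neq 0$ and $C_w \subset \tu$, so $\Gamma(j_!j^*\cV)$ is a highest weight module of highest weight $\lambda$.

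Finally, Theorem~\ref{maintheo}(4) reduces the integrability to the set of $i$ for which the stalk of $j_!j^*\cV$ along $C_{s_i w}$ is nonzero. Since $\lambda + \rho = -w\rho$, each $i \in I_V \subset I_{L(\lambda)}$ satisfies $s_i w < w$, so the cells $C_{s_i w}$ for $i \in I_V$ all have length $\ell(w) - 1$; in particular, for distinct $i, i' \in I_V$ one has $C_{s_i w} \not\subset X_{s_{i'} w}$. The cases now resolve cleanly: for $i \in J$, $C_{s_i w} \subset \tu$ so the stalk equals that of $\cV$, which is nonzero as $i \in I_V$; for $i \in I_V \setminus J$, $C_{s_i w} \subset X_{s_i w} \subset Z$ forces the stalk of the $j_!$-extension to vanish; for $i \notin I_V$ the stalk of $\cV$ already vanishes by Theorem~\ref{maintheo}(4) applied to $V$. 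I expect the main obstacle to be the bookkeeping in the first step --- pinning down the compatibility between $\mathbb{D}$ on the principal block and Verdier duality on the correctly twisted $D$-modules --- rather than anything in the subsequent geometric steps.
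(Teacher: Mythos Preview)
Your overall strategy---identify $\mathbb{D} H^0(\tu,\mathbb{D}\cV)$ with global sections of $j_! j^*\cV$, exhibit a surjection from $M(\lambda)$, then read off integrability from the stalks along the $C_{s_iw}$ via Proposition~\ref{poles}---is exactly the paper's. But there is a genuine gap in your first step: the open embedding $j\colon \tu \hookrightarrow G/B$ is \emph{not} affine. The set $Z = \bigcup_{i\in I_V\setminus J} X_{s_iw}$ consists of Schubert varieties of dimension $\ell(w)-1$; these are divisors in $X_w$, but have codimension $\ell(w_\circ)-\ell(w)+1$ in $G/B$, so for $w\neq w_\circ$ the complement $\tu$ contains complete curves and cannot be affine. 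Consequently $j_!$ and $j_*$ are only right- and left-$t$-exact respectively, you cannot treat $j_!j^*\cV$ as a single $D$-module, and your duality identification really lands in $\Gamma(G/B, \pH^0 j_!j^*\cV)$.

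The paper deals with this by carrying $\pH^0$ throughout and by working on $X_w$ rather than on $G/B$. The highest-weight step survives because $j_!$ is right $t$-exact, so the surjection from the Verma persists to $\pH^0 j_{U!}j_U^*\cP$. For integrability one cannot simply say ``stalks of the $j_!$-extension vanish on $Z$,'' since taking $\pH^0$ may reintroduce support there; instead the paper restricts to the smooth open $\cU = \cU_{I_{L(\lambda)}} \subset X_w$, where restriction is $t$-exact and hence commutes with $\pH^0$, and then computes $j_\cU^* \pH^0 j_{U!} j_U^* \cP$ directly by base change, landing on $j_!\C_{\cU_J}[\ell(w)]$ as required by Proposition~\ref{poles}. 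The passage between the operations on $G/B$ and on $X_w$ is handled by the comparison of distinguished triangles~\eqref{ddt}. Your dimension argument showing that the cells $C_{s_iw}$ for $i\in I_{L(\lambda)}$ are pairwise disjoint from each other's closures is correct and is implicitly what makes $\cU_{I_V}\cap U = \cU_J$; once you replace the false affineness claim with the $\pH^0$ bookkeeping above, the argument goes through.
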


Let us mention one ingredient in the proof of Theorem \ref{brgg}, namely
that the datum of Theorem \ref{maintheo} has the following geometric
manifestation. 

\begin{pro}\label{gint}
For $V$ and $\mathscr{V}$ as above, we have
\[
I_V = \{ i \in I: \text{the {$*$}-restriction of $\mathscr{V}$ to
$C_{s_iw}$ is nonzero} \}.
\]
\end{pro}

\subsection{Highest weight modules over symmetrizable quantum groups}

In the final section, we apply both the methods and the results from
earlier to study highest weight modules over quantum groups, and obtain
results similar to those discussed above.

\subsection{A further problem -- multiplicity-free Macdonald identities
}\label{Sfuture}

We conclude this section by calling attention to a problem suggested by
our results. Namely, the statement of Theorem \ref{wwks} includes the
assumption that the highest weight has finite integrable stabilizer.
However, the asserted identity otherwise tends to fail in interesting
ways. For example, we found the following by a direct calculation (see
also the extended abstract \cite{DK-fpsac17}).

\begin{pro}\label{Pweylkac}
For $\g$ of rank $2$ and the trivial module $L(0)$, we have
\begin{equation}\label{Eimaginary}
\sum_{w \in W} w \frac{1}{(1 - e^{-\alpha_1})(1 - e^{-\alpha_2})} = 1 +
\sum_{\alpha \in \Delta_-^{im}}e^\alpha,
\end{equation}
where $\Delta_-^{im}$ denotes the set of negative imaginary roots.
\end{pro} 

\noindent It would be interesting to extend Theorem \ref{wwks} to more
general objects, e.g. the trivial representation, which may be thought of
as multiplicity-free Macdonald identities. Indeed, Equation
\eqref{Eimaginary} suggests correction terms coming from imaginary roots,
akin to \cite{Kac74}.

\begin{re}
Since the writing of this paper, such a multiplicity-free Macdonald
identity was established in untwisted affine type by Niu--Zhang
\cite{SPUR}.
\end{re} 

\subsection*{Organization of the paper}

After introducing notation in Section \ref{not}, we prove in Section
\ref{S4} the main result. In Sections \ref{Sbump}--\ref{Sgeometry}, we
develop different applications of the main result. Finally, in Section
\ref{Squantum}, we discuss extensions of the previous sections to
quantized enveloping algebras.

\subsection*{Acknowledgments}

It is a pleasure to thank Asilata Bapat, Michel Brion, Daniel Bump,
Galyna Dobrovolska, Ian Grojnowski, Shrawan Kumar, James Lepowsky, Sam
Raskin, Geordie Williamson, and Zhiwei Yun for valuable discussions.
The work of G.D.~is partially supported by the Department of Defense
(DoD) through the NDSEG fellowship.
A.K.~was partially supported by Ramanujan Fellowship grant
SB/S2/RJN-121/2017, MATRICS grant MTR/2017/000295, and SwarnaJayanti
Fellowship grants SB/SJF/2019-20/14 and DST/SJF/MS/2019/3 from SERB and
DST (Govt.~of India), by grant F.510/25/CAS-II/2018(SAP-I) from UGC
(Govt.~of India), and by a Young Investigator Award from the Infosys
Foundation.

\section{Preliminaries and notations}\label{not}

The contents of this section are mostly standard. We advise the reader to
skim Subsection \ref{wwey}, and refer back to the rest only as needed.

\subsection{Notation for numbers and sums}

We write $\Z$ for the integers, and $\R, \C$ for the real
and complex numbers respectively. For a subset $S$ of a real vector space
$E$, we write $\Z^{\geqslant 0} S$ for the set of finite linear
combinations of $S$ with coefficients in $\Z^{\geqslant 0}$, and
similarly $\Z S, \R^{\geqslant 0} S$, etc.

\subsection{Notation for Kac--Moody algebras, standard parabolic and Levi
subalgebras}\label{rat}

The basic references are \cite{Kac} and \cite{Kumar}.
In this paper we work throughout over $\C$. Let $I$ be a finite set,
and $A = (a_{ij})_{i,j \in I}$ a generalized Cartan matrix.
Fix a realization $(\h, \pi, \hpi)$, with simple roots $\pi = \{\alpha_i
\}_{i \in I} \subset \h^*$ and coroots $\hpi = \{\halpha_i \}_{i \in I}
\subset \h$ satisfying $(\halpha_i, \alpha_j) = a_{ij}, \forall i,j \in
I$.

Let $\g := \g(A)$ be the associated Kac--Moody algebra generated by $\{
e_i, f_i : i \in I \}$ and $\h$, modulo the relations:
\begin{align*}
[e_i, f_j] = &\ \delta_{ij} \halpha_i, \
[h, e_i] = (h, \alpha_i) e_i, \
[h, f_i] = - (h, \alpha_i) f_i, \
[\h,\h] = 0, \quad \forall h \in \h, \ i,j \in I,\\
& ({\rm ad}\ e_i)^{1 - a_{ij}}(e_j) = 0, \quad
({\rm ad}\ f_i)^{1 - a_{ij}}(f_j) = 0, \quad \forall i,j \in I,
\ i \neq j.
\end{align*}

Denote by $\overline{\g}(A)$ the quotient of $\g(A)$ by the largest ideal
intersecting $\h$ trivially; these coincide when $A$ is symmetrizable.
When $A$ is clear from context, we will abbreviate these to $\g,
\overline{\g}$.

In the following we establish notation for $\g$; the same apply for
$\overline{\g}$ \textit{mutatis mutandis}.
Let $\Delta^+, \Delta^-$ denote the sets of positive and negative roots,
respectively.
We write $\alpha > 0$ for $\alpha \in \Delta^+$, and
similarly $\alpha < 0$ for $\alpha \in \Delta^-$. For a sum of roots
$\beta = \sum_{i \in I} k_i \alpha_i$ with all $k_i \geqslant 0$, write
$\supp \beta := \{ i \in I: k_i \neq 0 \}$. Write
\[
\n^- := \bigoplus_{ \alpha < 0} \g_\alpha, \qquad
\n^+ := \bigoplus_{\alpha > 0 } \g_{\alpha}.
\]

Let $\leqslant$ denote the standard partial order on $\h^*$, i.e.~for
$\mu, \lambda \in \h^*$, $\mu \leqslant \lambda$ if and only if $\lambda
- \mu  \in \Z^{\geqslant 0} \pi$. 

For any $J \subset I$, let $\fl_J$ denote the associated Levi subalgebra
generated by $\{ e_i, f_i : i \in J \}$ and $\h$. For $\lambda \in \h^*$
write $L_{\hspace{.5mm} \fl_J}(\lambda)$ for the simple $\fl_J$-module of
highest weight $\lambda$. Writing $A_J$ for the principal submatrix
$(a_{i,j})_{i,j \in J}$, we may (non-canonically) realize $\g(A_J) =:
\g_J$ as a subalgebra of $\g(A)$. Now write $\pi_J, \Delta^+_J,
\Delta^-_J$ for the simple, positive, and negative roots of $\g(A_J)$ in
$\h^*$, respectively (note these are independent of the choice of
realization).
Finally, we define the associated Lie subalgebras $\mathfrak{u}^+_J,
\mathfrak{u}^-_J, \n^+_J, \n^-_J$ by:
\[
\mathfrak{u}^\pm_J := \bigoplus_{\alpha \in \Delta^\pm \setminus
\Delta^\pm_J} \g_\alpha, \qquad \n^\pm_J := \bigoplus_{\alpha \in
\Delta^\pm_J} \g_\alpha,
\]

\noindent and $\mathfrak{p}_J := \fl_J \oplus \mathfrak{u}^+_J$ to be the
associated parabolic subalgebra.

\subsection{Weyl group, parabolic subgroups, Tits cone}\label{wwey}

Write $W$ for the Weyl group of $\g$, generated by the simple reflections
$\{ s_i, i \in I \}$, and let $\ell: W \rightarrow \Z^{\geqslant 0}$ be
the associated length function. For $J \subset I$, let $W_J$ denote the
parabolic subgroup of $W$ generated by $\{ s_j, j \in J \}$.

Write $P^+$ for the dominant integral weights, i.e.~$\{\mu \in \h^*:
(\halpha_i, \mu) \in \Z^{\geqslant 0}, \forall i \in I \}$. The following
choice is non-standard. Define the real subspace $\h^*_\R := \{ \mu \in
\h^*: (\halpha_i, \mu) \in \R, \forall i \in I \}$. Now define the {\em
dominant chamber} as $D := \{ \mu \in \h^*: (\halpha_i, \mu) \in
\R^{\geqslant 0}, \forall i \in I \} \subset \h^*_\R$, and the {\em Tits
cone} as $C := \bigcup_{w \in W} wD$. 

\begin{re}
In \cite{Kumar} and \cite{Kac}, the authors define $\h^*_\R$ to be a real
form of $\h^*$. This is smaller than our definition whenever the
generalized Cartan matrix $A$ is non-invertible, and has the consequence
that the dominant integral weights are not all in the dominant chamber,
unlike for us. This is a superficial difference, but our convention helps
avoid constantly introducing arguments like \cite[Lemma 8.3.2]{Kumar}.
\end{re}

We will also need parabolic analogues of the above. For $J \subset I$,
define $\h^*_\R(J) := \{ \mu \in \h^*: (\halpha_j, \mu) \in \R, \forall j
\in J \}$, the $J$ {\em dominant chamber} as $D_J := \{\mu \in \h^*:
(\halpha_j, \mu) \in \R^{\geqslant 0}, \forall j \in J \}$, and the $J$
{\em Tits cone} as $C_J := \bigcup_{w \in W_J} wD_J$. Finally, we write
$P^+_J$ for the $J$ dominant integral weights, i.e.~$\{ \mu \in \h^*:
(\halpha_j, \mu) \in \Z^{\geqslant 0}, \forall j \in J\}.$ The following
standard properties will be used without further reference in the paper:

\begin{pro}\label{tits}
For $\g(A)$ with realization $(\h, \pi, \hpi)$, let $\g(A^t)$ be the dual
algebra with realization $(\h^*, \hpi, \pi)$. Write $\check{\Delta}^+_J$
for the positive roots of the standard Levi subalgebra $\fl_J^t \subset
\g(A^t)$. 
\begin{enumerate}
\item For $\mu \in D_J$, the isotropy group $\{w \in W_J: w\mu = \mu \}$
is generated by the simple reflections it contains.

\item The $J$ dominant chamber is a fundamental domain for the action of
$W_J$ on the $J$ Tits cone, i.e., every $W_J$ orbit in $C_J$ meets $D_J$
in precisely one point. 

\item $C_J = \{ \mu \in \h^*_\R(J): (\halpha, \mu) < 0 \text{ for at most
finitely many } \halpha \in \check{\Delta}^+_J \}$; in particular, $C_J$
is a convex cone.

\item Consider $C_J$ as a subset of  $\h^*_\R(J)$ in the analytic
topology, and fix $\mu \in C_J$. Then $\mu$ is an interior point of $C_J$
if and only if the isotropy group $\{w \in W_J: w\mu = \mu \}$ is finite. 
\end{enumerate}
\end{pro}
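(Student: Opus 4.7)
The plan is to deduce each of (1)--(4) from its classical $J = I$ counterpart, namely \cite[Proposition 3.12]{Kac} and \cite[Proposition 1.4.2]{Kumar}, by exploiting that $W_J$ is precisely the full Weyl group of the sub-Kac--Moody algebra $\g(A_J)$, and reducing the $W_J$-action on $\h^*$ to its action on the dual Cartan of a realization of $A_J$.

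First I would set up this reduction. Fix a subspace $\h^{(J)} \subset \h$ containing $\hpi_J$ for which $(\h^{(J)}, \pi_J|_{\h^{(J)}}, \hpi_J)$ is a realization of $A_J$; such a choice exists by the same construction used to realize $\g(A_J)$ as a subalgebra of $\g(A)$ in Section \ref{rat}. Let $r: \h^* \to (\h^{(J)})^*$ denote the restriction map. Since $s_j\mu = \mu - (\halpha_j, \mu)\alpha_j$ for $j \in J$ and $\halpha_j \in \h^{(J)}$, the map $r$ is $W_J$-equivariant and surjective, with kernel
\[
\ker r = \{\mu \in \h^* : \mu|_{\h^{(J)}} = 0\}
\]
on which $W_J$ acts trivially. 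The conditions defining $D_J$, $C_J$, $P^+_J$, and the finiteness condition in (3) depend only on pairings $(\halpha, \mu)$ with $\halpha \in \hpi_J$ or $\halpha \in \check{\Delta}^+_J$; the latter lie in $\h^{(J)}$ under the identification of $\check{\Delta}^+_J$ with the positive roots of $\g(A_J^t) = \g(A_J)^t$. Thus $D_J = r^{-1}(\overline{D})$ and $C_J = r^{-1}(\overline{C})$, where $\overline{D}$, $\overline{C}$ are the dominant chamber and Tits cone of $\g(A_J)$ in $(\h^{(J)})^*$.

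Given this reduction, each assertion is an immediate transcription. For (1), the isotropy of $\mu \in D_J$ in $W_J$ coincides with that of $r(\mu) \in \overline{D}$, which by \cite[Proposition 3.12(b)]{Kac} is generated by the simple reflections it contains. For (2), since $\overline{D}$ is a fundamental domain for $W_J$ on $\overline{C}$ by \cite[Proposition 1.4.2(b)]{Kumar}, its preimage $D_J$ is a fundamental domain for $W_J$ on $C_J$. For (3), the coroot description of $\overline{C}$, \cite[Proposition 1.4.2(c)]{Kumar}, pulls back to the asserted description of $C_J$. For (4), a point $\mu \in C_J$ is interior in $\h^*_\R(J)$ if and only if $r(\mu)$ is interior in $\overline{C} \subset (\h^{(J)})^*_\R$, and by \cite[Proposition 1.4.2(e)]{Kumar} this is equivalent to finite isotropy.

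The main obstacle lies entirely in setting up the reduction, especially in verifying that all the structures in (1)--(4) factor through $r$. This verification is essentially tautological once one observes that $\ker r$ annihilates every $\halpha_j$ for $j \in J$, so that $W_J$ acts trivially on $\ker r$ and all relevant pairings are insensitive to elements of $\ker r$; after this, the four parts follow formally from the classical results cited.
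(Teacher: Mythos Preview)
Your proposal is correct and takes essentially the same approach as the paper: the paper's proof is the single sentence ``the reader can easily check that the standard arguments, cf.\ \cite[Proposition 1.4.2]{Kumar}, apply in this setting \emph{mutatis mutandis},'' and your argument is precisely a careful execution of this, reducing to the classical $J=I$ case via the restriction map $r$ to a realization of $A_J$. Your write-up is more explicit than what the paper provides, but the content is the same.
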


\begin{proof}
The reader can easily check that the standard arguments, cf.\
\cite[Proposition 1.4.2]{Kumar}, apply in this setting {\em mutatis
mutandis}.
\end{proof}

We also fix $\rho \in \h^*$ satisfying $(\halpha_i, \rho) = 1, \forall i
\in I$, and define the {\em dot action} of $W$ via $w \cdot \mu := w(\mu
+ \rho) - \rho$; this does not depend on the choice of $\rho$.

\subsection{Representations, integrability, and parabolic Verma
modules}\label{weyl}

Given an $\h$-module $M$ and $\mu \in \h^*$, write $M_\mu$ for the
corresponding simple eigenspace of $M$, i.e.~$M_\mu := \{ m \in M : hm =
(h,\mu) m\ \forall h \in \h \}$, and write $\wt M := \{ \mu \in \h^* :
M_\mu \neq 0 \}$.

Let $V$ be a highest weight $\g$-module with highest weight $\lambda \in
\h^*$. For $J \subset I$, we say $V$ is {\em $J$ integrable} if $f_j$
acts locally nilpotently on $V$, $\forall j \in J$. The following
standard lemma may be deduced from \cite[Lemma 1.3.3]{Kumar} and the
proof of \cite[Lemma 1.3.5]{Kumar}.

\begin{lem}\label{wint}\hfill
\begin{enumerate}
\item If $V$ is of highest weight $\lambda$, then $V$ is $J$ integrable
if and only if $f_j$ acts nilpotently on the highest weight line
$V_\lambda$, $\forall j \in J$. 

\item If $V$ is $J$ integrable, then $\ch V$ is $W_J$-invariant. 
\end{enumerate}
\end{lem}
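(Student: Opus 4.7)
For part (1), the forward direction is immediate, since $V_\lambda$ is one-dimensional. For the converse, my plan is to show that
\[ V^{(j)} := \{v \in V : f_j^N v = 0 \text{ for some } N \geqslant 0\} \]
is a $\g$-submodule; since $V^{(j)}$ contains a highest weight vector by hypothesis and $V = U(\g) v_\lambda$, this forces $V^{(j)} = V$. Stability under $\h$ and $f_j$ is automatic. For each $f_i$, the Serre relation $(\mathrm{ad}\, f_j)^{1-a_{ji}}(f_i) = 0$ together with the binomial identity
\[ f_j^N x = \sum_{k \geqslant 0} \binom{N}{k} (\mathrm{ad}\, f_j)^k(x) \cdot f_j^{N-k} \]
reduces $f_j^N(f_i v)$ to a finite sum of terms that each vanish for $N \gg 0$. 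For $e_i$ with $i \neq j$ the bracket vanishes outright, while for $e_j$ one uses the $\mathfrak{sl}_2(j)$-commutator identity $[f_j^N, e_j] = -N f_j^{N-1}(\halpha_j - N + 1)$ and restricts to the (graded) weight component $V_\mu$, where $\halpha_j$ acts by the scalar $(\halpha_j, \mu)$.

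For part (2), fix $j \in J$. Since all weights of $V$ lie in $\lambda - \Z^{\geqslant 0}\pi$, the coefficient of $\alpha_j$ in any weight is bounded above, so $e_j$ automatically acts locally nilpotently on $V$. Combined with the hypothesis on $f_j$, the subalgebra $\mathfrak{sl}_2(j) := \langle e_j, \halpha_j, f_j \rangle$ acts locally finitely on $V$. By Weyl's complete reducibility, $V$ is a (possibly infinite) direct sum of finite-dimensional irreducible $\mathfrak{sl}_2(j)$-submodules, each of which has $s_j$-symmetric $\h$-weight structure: the weights $\mu, \mu - \alpha_j, \ldots, \mu - (\halpha_j, \mu)\alpha_j$ of such a summand form an $s_j$-orbit. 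Since the weight spaces of $V$ are finite-dimensional, we read off $\dim V_\mu = \dim V_{s_j \mu}$ for all $\mu \in \h^*$. Running over $j \in J$ gives the $W_J$-invariance of $\ch V$.

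The main obstacle is the $e_j$-stability step in part (1): the bracket $[f_j, e_j] = -\halpha_j$ injects $\h$-terms into the commutator expansion, so the naive binomial identity does not immediately close. One has to extract the explicit $\mathfrak{sl}_2$-identity for $[f_j^N, e_j]$ and track the weight-dependent scalar $(\halpha_j, \mu) - N + 1$ that it produces on a weight vector; one then wins by choosing $N$ larger than both the nilpotence order on $v$ and this scalar. Everything else is formal manipulation with the Serre presentation and elementary $\mathfrak{sl}_2$-theory, essentially as in \cite[Lemmas 1.3.3 and 1.3.5]{Kumar}.
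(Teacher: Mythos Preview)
Your proof is correct and follows exactly the argument the paper defers to, namely \cite[Lemmas 1.3.3 and 1.3.5]{Kumar}; the paper itself gives no proof beyond that citation. One small cleanup: in the $e_j$-stability step you do not actually need $N$ larger than the scalar $(\halpha_j,\mu)-N+1$ --- once $N-1$ exceeds the nilpotence order of $f_j$ on $v$, the term $-N\bigl((\halpha_j,\mu)-N+1\bigr)f_j^{N-1}v$ already vanishes regardless of that scalar.
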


Let $I_V$ denote the maximal $J$ for which $V$ is $J$ integrable, i.e.,
\begin{equation}
I_V = \{ i \in I : (\halpha_i, \lambda) \in \Z^{\geqslant 0}, f_i^{
(\halpha_i, \lambda) + 1} V_\lambda = 0 \}.
\end{equation}
We will call $W_{I_V}$ the {\em integrable Weyl group}. 

We next remind the basic properties of parabolic Verma modules over
Kac--Moody algebras. These are also known in the literature as
generalized Verma modules, e.g.~in the original papers by Lepowsky (see
\cite{lepo1} and the references therein).

Fix $\lambda \in \h^*$ and a subset $J$ of $I_\lambda = \{ i \in
I: (\halpha_i, \lambda) \in \Z^{\geqslant 0} \}$. The parabolic Verma
module $M(\lambda, J)$ co-represents the following functor from $\g\Mod$
to Set:
\begin{equation}\label{Epvm}
M \rightsquigarrow \{ m \in M_\lambda : \n^+ m = 0,\ f_j \text{ acts
nilpotently on } m, \forall j \in J \}.
\end{equation}

\noindent When $J$ is empty, we simply write $M(\lambda)$ for the Verma
module.
From the definition and Lemma \ref{wint}, it follows that
$M(\lambda, J)$ is a highest weight module that is $J$ integrable.

More generally, for any subalgebra $\fl_J \subset \fs \subset \g$, the
module $M_\fs(\lambda,J)$ will co-represent the functor from $\fs\Mod$ to
Set:
\begin{equation}\label{Ecorep}
M \rightsquigarrow \{ m \in M_\lambda : (\n^+ \cap \fs) m = 0,\ f_j
\text{ acts nilpotently on } m, \forall j \in J \}.
\end{equation}

\noindent We will only be concerned with $\fs$ equal to a Levi or
parabolic subalgebra. In accordance with the literature, in the
case of ``full integrability'' we define
$L^{\max}(\lambda) := M(\lambda,I)$ for $\lambda \in P^+$, and similarly
$L^{\max}_{\mathfrak{p}_J}(\lambda) := M_{\mathfrak{p}_J}(\lambda,J)$ and
$L^{\max}_{\fl_J}(\lambda) := M_{\fl_J}(\lambda,J)$ for $\lambda \in
P_J^+$. Finally, write $L_{\hspace{.5mm} \fl_J}(\lambda)$ for the simple
quotient of $L_{\hspace{.5mm} \fl_J}^{\max}(\lambda).$ 

\begin{pro}[Basic formulae for parabolic Verma modules]\label{para}
Let $\lambda \in P^+_J$.
\begin{enumerate}
\item $M(\lambda, J) \simeq M(\lambda)/( f_j^{(\halpha_j, \lambda) + 1}
M(\lambda)_\lambda, \forall j \in J).$
 
\item Given a Lie subalgebra $\fl_J \subset \fs \subset \g$, define
$\fs_J^+ := \fl_J \oplus (\mathfrak{u}_J^+ \cap \fs)$. Then:
\begin{equation}
M_{\fs}(\lambda, J) \simeq
\Ind^\fs_{\fs_J^+} \Res_{\fs_J^+}^{\fl_J} L^{\max}_{\fl_J}(\lambda),
\end{equation}

\noindent where we view $\fl_J$ as a quotient of $\fs_J^+$ via the short
exact sequence:
\[
0 \to \mathfrak{u}_J^+ \cap \fs \to \fs_J^+ \to \fl_J \to 0.
\]
In particular, for $\fs = \g$:
\begin{equation}\label{Everma}
\wt M(\lambda, J) = \wt L_{\hspace{.5mm} \fl_J}(\lambda) - \Z^{\geqslant
0} (\Delta^+ \setminus \Delta^+_J).
\end{equation}
\end{enumerate}
\end{pro}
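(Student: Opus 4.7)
The plan is to prove both assertions by Yoneda, identifying each side by the functor it co-represents, and reducing the proofs to $\mathfrak{sl}_2$-theory, Frobenius reciprocity, and PBW.

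For (1), let $v_\lambda \in M(\lambda)$ denote a highest weight vector, and let $N \subset M(\lambda)$ be the submodule generated by $\{f_j^{(\halpha_j,\lambda)+1} v_\lambda : j \in J\}$. For any $\g$-module $M$, maps $M(\lambda) \to M$ correspond to vectors $m \in M_\lambda$ with $\n^+ m = 0$; such a map factors through $M(\lambda)/N$ precisely when $f_j^{(\halpha_j,\lambda)+1} m = 0$ for all $j \in J$. The key observation, via the $\mathfrak{sl}_2$-triple $\langle e_j, f_j, \halpha_j\rangle$, is that given $e_j m = 0$ and $\halpha_j m = (\halpha_j,\lambda) m$ with $(\halpha_j,\lambda) \in \Z^{\geqslant 0}$ (which holds since $J \subset I_{L(\lambda)}$), the vanishing $f_j^{(\halpha_j,\lambda)+1} m = 0$ is equivalent to $f_j$ acting nilpotently on $m$; combined with Lemma \ref{wint}(1), this propagates to local nilpotence on all of $U(\g)m$. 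Hence $M(\lambda)/N$ and $M(\lambda, J)$ co-represent the same functor and are canonically isomorphic.

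For (2), apply Frobenius reciprocity: for any $\fs$-module $M$,
\[
\Hom_\fs\bigl(\Ind^\fs_{\fs_J^+} \Res_{\fs_J^+}^{\fl_J} L^{\max}_{\fl_J}(\lambda),\ M\bigr) \simeq \Hom_{\fs_J^+}\bigl(\Res_{\fs_J^+}^{\fl_J} L^{\max}_{\fl_J}(\lambda),\ M|_{\fs_J^+}\bigr).
\]
A map on the right picks out the image $m \in M_\lambda$ of the highest weight vector; the $\fs_J^+$-equivariance together with the triviality of $\fu_J^+ \cap \fs$ on the source (via the inflation $\fs_J^+ \twoheadrightarrow \fl_J$) forces $(\fu_J^+ \cap \fs)m = 0$, while the $\fl_J$-analog of (1) applied to $L^{\max}_{\fl_J}(\lambda)$ forces $(\n^+ \cap \fl_J)m = 0$ and $f_j$ locally nilpotent on $m$ for $j \in J$. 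Using the decomposition $\n^+ \cap \fs = (\n^+ \cap \fl_J) \oplus (\fu_J^+ \cap \fs)$, the first two conditions combine to $(\n^+ \cap \fs)m = 0$, which is exactly the functor \eqref{Ecorep} co-represented by $M_\fs(\lambda, J)$. This yields the asserted isomorphism.

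For the weight formula, specialize to $\fs = \g$, whence $\fs_J^+ = \fp_J$. The PBW decomposition $U(\g) \simeq U(\fu_J^-) \otimes_\C U(\fp_J)$ gives an $\h$-linear identification $M(\lambda, J) \simeq U(\fu_J^-) \otimes_\C L^{\max}_{\fl_J}(\lambda)$; since the weights of $U(\fu_J^-)$ are $-\Z^{\geqslant 0}(\Delta^+ \setminus \Delta^+_J)$, additivity under tensor product gives $\wt M(\lambda, J) = \wt L^{\max}_{\fl_J}(\lambda) - \Z^{\geqslant 0}(\Delta^+ \setminus \Delta^+_J)$. Identifying $\wt L^{\max}_{\fl_J}(\lambda) = \wt L_{\fl_J}(\lambda)$ (both are $J$-integrable highest weight $\fl_J$-modules of highest weight $\lambda \in P_J^+$, so their convex hulls agree by Theorem \ref{main1}; under the partial order $\leqslant$ their weight sets coincide) yields \eqref{Everma}. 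The only real substance is Lemma \ref{wint}(1), which underwrites the passage from pointwise $\mathfrak{sl}_2$-annihilation to global nilpotence; everything else is formal bookkeeping.
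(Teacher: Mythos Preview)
The paper states Proposition \ref{para} as a preliminary without proof, so there is no reference argument to compare against; your Yoneda/Frobenius approach is the natural one and parts (1) and (2) are fine. However, the last step of your weight-formula argument is circular: you invoke Theorem \ref{main1} to identify $\wt L^{\max}_{\fl_J}(\lambda)$ with $\wt L_{\fl_J}(\lambda)$, but Theorem \ref{main1} (equivalently Theorem \ref{intgrp}) is proved in the paper \emph{using} Proposition \ref{para}, both directly and via the Integrable Slice and Ray Decompositions. Moreover, Theorem \ref{main1} only gives equality of convex hulls, not of weights, so even ignoring circularity your deduction ``under the partial order $\leqslant$ their weight sets coincide'' does not follow from it.

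The fix is easy: replace the appeal to Theorem \ref{main1} by Proposition \ref{nonde}(3), which is quoted from \cite{Kac} and is logically prior. That result says any integrable highest weight module $V$ of highest weight $\lambda$ satisfies $\wt V = (\lambda - \Z^{\geqslant 0}\pi) \cap \conv(W\lambda)$; applied to $\fl_J$ (or $\g_J$), it gives $\wt L^{\max}_{\fl_J}(\lambda) = \wt L_{\fl_J}(\lambda)$ immediately, since both are $J$-integrable of highest weight $\lambda$. This is exactly how the paper uses Proposition \ref{nonde}(3) in the proof of Proposition \ref{slice}. A minor further comment: in (2), the converse direction (that any $m$ satisfying the conditions of \eqref{Ecorep} yields an $\fs_J^+$-map) is cleanest via the adjunction $\Hom_{\fs_J^+}(\Res^{\fl_J}_{\fs_J^+} W, M) \simeq \Hom_{\fl_J}(W, M^{\fu_J^+ \cap \fs})$, which uses that $[\fl_J, \fu_J^+ \cap \fs] \subset \fu_J^+ \cap \fs$; you implicitly use this but it is worth making explicit.
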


We remark that some authors use the term inflation (from $\fl_J$ to
$\fs_J^+$) in place of $\Res^{\fl_J}_{\fs_J^+}$.

\section{Three invariants of highest weight modules}\label{S4}

In this section we prove the main result of the paper, Theorem
\ref{maintheo}, except for the perverse sheaf formulation, which we
address in Section \ref{Sgeometry}. The two tools we use are:
(i) an ``Integrable Slice Decomposition'' of the weights of parabolic
Verma modules $M(\lambda,J)$, which extends a previous construction in
\cite{Kh1} to representations of Kac--Moody algebras; and
(ii) a ``Ray Decomposition'' of the convex hull of these weights, which
is novel in both finite and infinite type for non-integrable highest
weight modules.

\subsection{The Integrable Slice Decomposition}

Our first goal will be to prove the following statement.

\begin{pro}[Integrable Slice Decomposition]\label{slice}
\begin{equation}\label{Eslice}
\wt M(\lambda, J) = \bigsqcup_{\mu \in \Z^{\geqslant 0} (\pi \setminus
\pi_J) } \wt L_{\hspace{.5mm} \fl_J}(\lambda - \mu).
\end{equation}
In particular, $\wt M(\lambda, J)$ lies in the $J$ Tits cone (cf.\
Section~\ref{wwey}).
\end{pro}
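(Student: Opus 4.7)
The plan is to exploit the PBW identification $M(\lambda, J) \simeq U(\fu_J^-) \otimes_\C L_{\fl_J}(\lambda)$ as $\fl_J$-modules (diagonal action), inherited from Proposition~\ref{para}(2). I would slice $M(\lambda,J)$ by the non-$J$ component of its weights: for each $\mu \in \Z^{\geqslant 0}(\pi \setminus \pi_J)$, set
\[
M(\lambda,J)^\mu := \bigoplus_{\sigma \in \Z^{\geqslant 0}\pi_J} M(\lambda,J)_{\lambda - \mu - \sigma}.
\]
Since $\fl_J$ shifts $\h^*$-weights only by $\Z\pi_J$, each $M(\lambda,J)^\mu$ is $\fl_J$-stable and $M(\lambda,J) = \bigoplus_\mu M(\lambda,J)^\mu$. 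Disjointness of the proposed union is then automatic: $\wt L_{\fl_J}(\lambda - \mu) \subseteq \lambda - \mu - \Z^{\geqslant 0}\pi_J$, and distinct $\mu$'s give disjoint cosets modulo $\Z\pi_J$ since the simple roots are linearly independent. So it suffices to establish $\wt M(\lambda,J)^\mu = \wt L_{\fl_J}(\lambda - \mu)$ for each $\mu$.

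For the inclusion $\supseteq$, first observe $\lambda - \mu \in P^+_J$: for $j \in J$, $(\halpha_j, \mu) = \sum_{i \notin J} n_i a_{ji} \leqslant 0$ since $j \neq i$ forces $a_{ji} \leqslant 0$, hence $(\halpha_j, \lambda - \mu) \geqslant 0$. Writing $\mu = \sum_{i \notin J} n_i \alpha_i$, the element $u_\mu := \prod_{i \notin J} f_i^{n_i}$ satisfies $(\operatorname{ad} e_j)(u_\mu) = 0$ for every $j \in J$ by Leibniz, since $[e_j, f_i] = 0$ whenever $i \neq j$. Thus $u_\mu \cdot v_\lambda \in M(\lambda, J)^\mu$ is a nonzero $\fl_J$-highest weight vector of weight $\lambda - \mu$. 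Because $\operatorname{ad} e_j$ and $\operatorname{ad} f_j$ act locally nilpotently on $\g$ for all $j$, the $\fl_J$-module $U(\fu_J^-)$ is integrable, as is $L_{\fl_J}(\lambda)$, so $M(\lambda, J)$ is $\fl_J$-integrable. The $\fl_J$-submodule generated by $u_\mu v_\lambda$ is then forced to be the integrable simple $L_{\fl_J}(\lambda - \mu)$, yielding the desired containment.

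The main obstacle is the reverse inclusion $\wt M(\lambda, J)^\mu \subseteq \wt L_{\fl_J}(\lambda - \mu)$. By $\fl_J$-integrability, $M(\lambda, J)^\mu$ decomposes into integrable simples $L_{\fl_J}(\eta)$ with $\eta \in P^+_J \cap (\lambda - \mu - \Z^{\geqslant 0}\pi_J)$, and for each such $\eta$ I would deduce $\wt L_{\fl_J}(\eta) \subseteq \wt L_{\fl_J}(\lambda - \mu)$ from the saturation property for integrable simple highest weight modules over Kac--Moody algebras: any $\fl_J$-dominant weight $\leqslant_J \lambda - \mu$ occurs as a weight of $L_{\fl_J}(\lambda - \mu)$, and its full weight set is obtained from these by $W_J$-translation. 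The concluding ``in particular'' claim is then immediate, since each $\wt L_{\fl_J}(\lambda - \mu)$ is contained in $W_J \cdot D_J = C_J$.
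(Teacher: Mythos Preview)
Your outline is close to the paper's, and the $\supseteq$ direction is essentially fine (modulo two small imprecisions: the $\fl_J$-submodule generated by $u_\mu v_\lambda$ need not be \emph{simple} for non-symmetrizable $\fl_J$, only integrable of highest weight $\lambda-\mu$, and you should say why $u_\mu v_\lambda \neq 0$ in $M(\lambda,J)$ --- both are harmless since only weights matter and the kernel of $M(\lambda)\to M(\lambda,J)$ has weights strictly below some $s_j\cdot\lambda$, $j\in J$).

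The genuine gap is in the $\subseteq$ direction. The ``saturation property'' you invoke --- that every $J$-dominant $\eta \leqslant_J \lambda - \mu$ lies in $\wt L_{\fl_J}(\lambda-\mu)$ --- is \emph{false} for general Kac--Moody $\fl_J$. By Proposition~\ref{nonde}(1), a $J$-dominant $\eta$ lies in $\wt L_{\fl_J}(\lambda-\mu)$ iff $\eta$ is \emph{non-degenerate} with respect to $\lambda-\mu$, i.e.\ $\lambda-\mu$ is not orthogonal to any connected component of $\supp_J((\lambda-\mu)-\eta)$. Proposition~\ref{nonde}(2) upgrades this to plain $\eta \leqslant_J \lambda-\mu$ only when the subdiagram on $\{j\in J:(\halpha_j,\lambda-\mu)=0\}$ is a disjoint union of finite-type diagrams, which you have no reason to assume. (Concretely: take $\fl_J$ of indefinite type and $\lambda-\mu$ orthogonal to all of $J$; then $L_{\fl_J}(\lambda-\mu)$ is one-dimensional, yet there exist $J$-dominant $\eta \lneq_J \lambda-\mu$.)

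What the paper does instead is verify non-degeneracy \emph{for the specific weights that occur}. After using $W_J$-invariance to assume $\nu \in \wt M(\lambda,J)^\mu$ is $J$-dominant, one writes via Proposition~\ref{para}(2)
\[
\nu = \lambda - \mu_L - \sum_{k=1}^n \beta_k, \qquad \lambda - \mu_L \in \wt L_{\fl_J}(\lambda),\quad \beta_k \in \Delta^+\setminus\Delta^+_J,
\]
and then checks that $\nu$ is non-degenerate with respect to $\lambda - \mu$: this uses that $\lambda-\mu_L$ is already non-degenerate with respect to $\lambda$, together with the fact that each $\beta_k$ has connected support meeting $I\setminus J$. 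Your argument can be repaired by replacing the appeal to saturation with this non-degeneracy check.
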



In proving Proposition \ref{slice} and below, the following results will
be of use to us: 

\begin{pro}[{\cite[\S 11.2 and Proposition 11.3(a)]{Kac}}]\label{nonde}
For $\lambda \in P^+, \ \mu \in \h^*$, say $\mu$ is {\em non-degenerate
with respect to $\lambda$} if $\mu \leqslant \lambda$ and $\lambda$ is
not perpendicular to any connected component of $\supp (\lambda - \mu)$,
cf.\ Section \ref{rat} for notation.
Let $V$ be an integrable module of highest weight $\lambda$.
\begin{enumerate}
\item If $\mu \in P^+$, then $\mu \in \wt V$ if and only if $\mu$ is
non-degenerate with respect to $\lambda$. 

\item If the sub-diagram on $\{ i \in I: (\halpha_i, \lambda) = 0 \}$ is
a disjoint union of diagrams of finite type, then $\mu \in P^+$ is
non-degenerate with respect to $\lambda$ if and only if $\mu \leqslant
\lambda$.

\item $\wt V = (\lambda - \Z^{\geqslant 0} \pi) \cap \conv(W \lambda)$.
\end{enumerate}
\end{pro}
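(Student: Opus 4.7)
The plan is to establish (1) first, derive (2) as a near-immediate corollary, and deduce (3) by combining (1) with the $W$-invariance of $\wt V$ supplied by Lemma \ref{wint}(2). As a preliminary reduction, for $V$ an integrable highest weight module the weight $\lambda$ is automatically in $P^+$, and the surjection $L^{\max}(\lambda) \twoheadrightarrow V$ gives $\wt V \subseteq \wt L^{\max}(\lambda)$, while $W$-invariance plus $\lambda \in \wt V$ forces $W \cdot \lambda \subseteq \wt V$; together these pin $\wt V$ down enough that it suffices to treat $V = L^{\max}(\lambda)$.

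The necessity in (1) is the conceptual core. Suppose $\mu \in P^+$, $\mu \leqslant \lambda$, and $\lambda$ is perpendicular to a connected component $S$ of $\supp(\lambda - \mu)$. Then $(\halpha_i, \lambda) = 0$ for $i \in S$, and integrability forces $f_i V_\lambda = 0$ (while $e_i V_\lambda = 0$ by highest weight), so $V_\lambda$ generates a trivial $\fl_S$-submodule. Decomposing $\lambda - \mu = \nu_S + \nu_{S^c}$ along $S$ and $\supp(\lambda-\mu) \setminus S$, the fact that $S$ is a \emph{connected component} of $\supp(\lambda-\mu)$ means no simple root links $S$ to its complement inside $\supp(\lambda-\mu)$; the only way to reach $\mu$ from $V_\lambda$ would be to apply some $f_i$'s with $i \in S$ after first producing the complementary part $\nu_{S^c}$, and these $f_i$ commute past the $f_j$'s with $j \notin S$ (modulo operators landing outside $U(\n^-) V_\lambda$ at the relevant weight) in the sense needed to conclude $\mu \notin \wt V$.

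For sufficiency in (1), given non-degenerate $\mu \in P^+$ with $\mu \leqslant \lambda$, I would induct on the height of $\lambda - \mu$. Connectedness together with non-perpendicularity of $\lambda$ to each component of $\supp(\lambda-\mu)$ provides, at each step, a simple root $\alpha_i$ along which the integrable $\mathfrak{sl}_2$-string through an inductively constructed weight vector descends to a nonzero weight space at $\mu$; in the symmetrizable case one can alternatively read this off the Weyl--Kac formula for $\ch L(\lambda)$. Part (2) then follows combinatorially: when $\{i : (\halpha_i, \lambda) = 0\}$ is a union of finite-type diagrams, any component $S$ on which $\lambda$ vanishes sits inside a finite $W_S$, and $W_S$-invariance of $\wt V$ combined with $\mu \leqslant \lambda$ forces $\nu_S = 0$, so no such $S$ can be a component of $\supp(\lambda-\mu)$. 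Part (3) is now immediate: the inclusion $\subseteq$ holds for any highest weight module, and for the reverse, $\wt V$ is $W$-stable, every $W$-orbit in the Tits cone meets $D$ exactly once by Proposition \ref{tits}(2), and a dominant representative of any $\nu \in \conv(W\lambda) \cap (\lambda - \Z^{\geqslant 0}\pi)$ is automatically non-degenerate by a separating hyperplane argument applied to $\conv(W\lambda)$.

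The principal obstacle is the sufficiency direction of (1) outside the symmetrizable setting, where no Weyl--Kac character formula is available to serve as a black box. The natural path is the inductive construction of weight vectors that exploits connectedness of $\supp(\lambda-\mu)$ to locate a descent direction at each step, ultimately reducing the nonvanishing to the representation theory of rank-one integrable $\mathfrak{sl}_2$-subalgebras, where it is transparent.
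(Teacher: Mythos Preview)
The paper does not prove this proposition: it is cited from \cite[\S 11.2 and Proposition 11.3(a)]{Kac}, where it is stated for $\overline{\g}$. The paper's sole contribution is the one-sentence observation that the result transfers to $\g$: one has $\wt L(\lambda) \subset \wt V \subset (\lambda - \Z^{\geqslant 0}\pi) \cap \conv(W\lambda)$, and the outer terms coincide by Kac's result applied to $\overline{\g}$, from which $L(\lambda)$ is inflated. So your from-scratch argument is attempting substantially more than the paper does, and should be compared with Kac's proof rather than the paper's.

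Your sketch of (1) follows the standard line in Kac and is essentially fine. Your argument for (2), however, invokes ``$W_S$-invariance of $\wt V$,'' which is a red herring: statement (2) is purely combinatorial and does not mention $V$. The correct reasoning is that $\mu \in P^+$, $\lambda \perp S$, and $S$ disconnected from $\supp(\lambda-\mu)\setminus S$ force $(\halpha_i,\nu_S)\leqslant 0$ for all $i\in S$; since $A_S$ is of finite type, positive-definiteness of the symmetrized Cartan matrix then gives $\nu_S=0$.

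The genuine gap is in (3). First, the inclusion $\wt V \subset \conv(W\lambda)$ does \emph{not} hold ``for any highest weight module'' --- a Verma module already fails it --- so integrability must be used, and even then the containment in $\conv(W\lambda)$ is not automatic in Kac--Moody type: the naive bound $\wt V \subset \bigcap_{w\in W} (w\lambda - \R^{\geqslant 0}\pi)$ coming from $W$-invariance is in general strictly larger than $\conv(W\lambda)$ (for $\lambda = 0$ in affine type the intersection contains $-\R^{\geqslant 0}\delta$). Second, your ``separating hyperplane argument'' for the reverse inclusion is not spelled out, and this is precisely where the work lies. What you need is that for dominant $\mu$ with $\mu \leqslant \lambda$, membership in $\conv(W\lambda)$ is equivalent to non-degeneracy; this is true but not obvious, and Kac's proof of (3) handles it by a direct induction rather than by deducing it from (1). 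If you want to route (3) through (1), you must supply this convexity equivalence, and you have not indicated how.
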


Proposition \ref{nonde} is explicitly stated in \cite{Kac} for
$\overline{\g}$, but also holds for $\g$. To see this, note that
$\wt V \subset (\lambda - \Z^{\geqslant 0} \pi) \cap \conv(W \lambda)$,
and the latter are the weights of its simple quotient $L(\lambda)$, which
is inflated from $\overline{\g}$.

\begin{proof}[Proof of Proposition \ref{slice}]
The disjointness of the terms on the right-hand side is an easy
consequence of the linear independence of simple roots. 

We first show the inclusion $\supset$. Recall the isomorphism of
Proposition \ref{para}:
\[
M(\lambda, J) \simeq M(\lambda)/ (f_j^{(\halpha_j, \lambda) + 1}
M(\lambda)_{\lambda}, \forall j \in J).
\]

It follows that the weights of $\ker( M(\lambda) \to M(\lambda, J))$ are
contained in $\bigcup_{j \in J} \{ \nu \leqslant s_j \cdot \lambda \}$.
Hence $\lambda - \mu$ is a weight of $M(\lambda, J), \forall \mu \in
\Z^{\geqslant 0} \pi_{I \setminus J}$. Any nonzero element of $M(\lambda,
J)_{\lambda - \mu}$ generates an integrable highest weight
$\fl_J$-module. As the weights of all such modules coincide by
~Proposition \ref{nonde}(3), this shows the inclusion $\supset$.

We next show the inclusion $\subset$.
For any $\mu \in \Z^{\geqslant 0} \pi_{I \setminus J}$, the `integrable
slice'
\[
S_{\mu} := \bigoplus_{\nu \in \lambda - \mu + \Z \pi_J}
M(\lambda, J)_{\nu}
\]
lies in Category $\mathscr{O}$ for $\fl_J$, and is furthermore an
integrable $\fl_J$-module. It follows that the weights of $M(\lambda, J)$
lie in the $J$ Tits cone.

Let $\nu$ be a weight of $M(\lambda, J)$, and write $\lambda - \nu =
\mu_{J} + \mu_{I \setminus J}, \mu_J \in \Z^{\geqslant 0} \pi_J, \mu_{I
\setminus J} \in \Z^{\geqslant 0} \pi_{I \setminus J}$. We need to show
$\nu \in \wt L_{\hspace{.5mm} \fl_J}(\lambda - \mu_{I \setminus J})$. By
$W_J$-invariance,  we may assume $\nu$ is $J$ dominant. By Proposition
\ref{nonde}, it suffices to show that $\nu$ is non-degenerate with
respect to $(\lambda - \mu_{I \setminus J})$. To see this, using
Proposition \ref{para} write: 
\[
\nu = \lambda - \mu_L - \sum_{k=1}^n \beta_k, \quad \text{where} \quad
\lambda - \mu_L \in \wt L_{\hspace{.5mm} \fl_J}(\lambda),\ \beta_k \in
\Delta^+ \setminus \Delta^+_J,\ 1 \leqslant k \leqslant n.
\]
The claimed nondegeneracy follows from the fact that $\lambda - \mu_L$ is
nondegenerate with respect to $\lambda$ and that the support of each
$\beta_k$ is connected.
\end{proof}

As an immediate consequence of the Integrable Slice Decomposition
\ref{slice}, we present a family of decompositions of $\wt M(\lambda,J)$,
which interpolates between the two sides of Equation \eqref{Eslice}:

\begin{cor}\label{Cslice}
For subsets $J \subset J' \subset I$, we have:
\begin{equation}
\wt M(\lambda, J) = \bigsqcup_{\mu \in \Z^{\geqslant 0} (\pi \setminus
\pi_{J'})} \wt M_{\fl_{J'}}(\lambda - \mu, J),
\end{equation}

\noindent where $M_{\fl_{J'}}(\lambda, J)$ was defined in Equation
\eqref{Ecorep}.
\end{cor}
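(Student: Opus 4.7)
The plan is to deduce Corollary \ref{Cslice} as a two-step iteration of the Integrable Slice Decomposition: first slice $M(\lambda,J)$ using Proposition \ref{slice} for $\g$, then regroup the slices according to the coarser decomposition $\pi \setminus \pi_J = (\pi \setminus \pi_{J'}) \sqcup (\pi_{J'} \setminus \pi_J)$, and finally recognize each regrouped cluster as the weights of an $\fl_{J'}$-parabolic Verma module via Proposition \ref{slice} applied inside $\fl_{J'}$.

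Concretely, I would first observe that the hypothesis $J \subset I_{L(\lambda)}$ passes to $\lambda - \mu$ for any $\mu \in \Z^{\geqslant 0}(\pi \setminus \pi_{J'})$: for $j \in J \subset J'$ and a simple root $\alpha_i$ with $i \notin J'$ one has $a_{ji} \leqslant 0$, so $(\halpha_j, \lambda - \mu) \geqslant (\halpha_j, \lambda) \in \Z^{\geqslant 0}$. Hence $M_{\fl_{J'}}(\lambda - \mu, J)$ is defined, and Proposition \ref{slice} applied to the Kac--Moody algebra $\fl_{J'}$ (with ambient index set $J'$ and distinguished subset $J$) yields
\[
\wt M_{\fl_{J'}}(\lambda - \mu, J) = \bigsqcup_{\nu \in \Z^{\geqslant 0}(\pi_{J'} \setminus \pi_J)} \wt L_{\fl_J}(\lambda - \mu - \nu).
\]

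Substituting this into the proposed right-hand side of Corollary \ref{Cslice} gives a double disjoint union indexed by pairs $(\mu, \nu) \in \Z^{\geqslant 0}(\pi \setminus \pi_{J'}) \times \Z^{\geqslant 0}(\pi_{J'} \setminus \pi_J)$. Linear independence of the simple roots in $\pi$ shows that the assignment $(\mu,\nu) \mapsto \mu + \nu$ is a bijection onto $\Z^{\geqslant 0}(\pi \setminus \pi_J)$, so the double union collapses to $\bigsqcup_{\eta \in \Z^{\geqslant 0}(\pi \setminus \pi_J)} \wt L_{\fl_J}(\lambda - \eta)$, which is exactly $\wt M(\lambda, J)$ by Proposition \ref{slice} for $\g$. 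Disjointness of the outer indexing is automatic because distinct $\mu$'s in $\Z^{\geqslant 0}(\pi \setminus \pi_{J'})$ correspond to distinct cosets modulo $\Z \pi_{J'}$, while each $\wt M_{\fl_{J'}}(\lambda - \mu, J)$ lies in the single coset $\lambda - \mu + \Z \pi_{J'}$.

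There is no genuine obstacle here; the only point requiring care is the bookkeeping check that applying Proposition \ref{slice} inside $\fl_{J'}$ is legitimate, i.e., that $\lambda - \mu \in P^+_J$ and that $M_{\fl_{J'}}(\lambda - \mu, J)$ as defined by the corepresentability condition \eqref{Ecorep} agrees with the parabolic Verma module to which Proposition \ref{slice} applies. Both are immediate from the definitions and the sign of the off-diagonal Cartan matrix entries, so the whole argument amounts to a formal rearrangement of the single-step decomposition.
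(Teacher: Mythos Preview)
Your argument is correct and is exactly the ``immediate consequence'' the paper has in mind: apply Proposition \ref{slice} once for $\g$ and once for $\fl_{J'}$, then match the two via the bijection $(\mu,\nu)\mapsto \mu+\nu$ from $\Z^{\geqslant 0}(\pi\setminus\pi_{J'})\times\Z^{\geqslant 0}(\pi_{J'}\setminus\pi_J)$ onto $\Z^{\geqslant 0}(\pi\setminus\pi_J)$. The paper gives no further details beyond declaring the corollary an immediate consequence of Proposition \ref{slice}, so your write-up is in fact more explicit than the original.
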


As a second consequence, Proposition \ref{nonde}(2) and the Integrable
Slice Decomposition \ref{slice} yield the following simple description of
the weights of most parabolic Verma modules.

\begin{cor}\label{Cfinite}
Suppose $\lambda$ has finite $W_J$-isotropy. Then,
\begin{equation}
\wt M(\lambda, J) = \bigcup_{w \in W_J} w \{ \nu \in P^+_J : \nu
\leqslant \lambda \}.
\end{equation}
\end{cor}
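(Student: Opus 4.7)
The plan is to combine Proposition \ref{slice} (the Integrable Slice Decomposition) with Proposition \ref{nonde} applied to the Levi $\fl_J$ on each slice.

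First I would observe that for every $\mu \in \Z^{\geqslant 0}(\pi \setminus \pi_J)$ the shifted highest weight $\lambda - \mu$ still lies in $P^+_J$. Indeed, for $j \in J$ and $i \neq j$ one has $(\halpha_j, \alpha_i) = a_{ji} \leqslant 0$, so $(\halpha_j, \mu) \leqslant 0$ and hence $(\halpha_j, \lambda - \mu) \in \Z^{\geqslant 0}$. Consequently each module $L_{\fl_J}(\lambda - \mu)$ appearing on the right side of \eqref{Eslice} is integrable over $\fl_J$.

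Next I would verify the hypothesis of Proposition \ref{nonde}(2) for each slice, namely that the sub-diagram on
\[
J_0(\mu) := \{ j \in J : (\halpha_j, \lambda - \mu) = 0 \}
\]
is of finite type. Since $(\halpha_j, \lambda) \geqslant 0$ and $(\halpha_j, \mu) \leqslant 0$ for $j \in J$, the vanishing $(\halpha_j, \lambda - \mu) = 0$ forces both summands to vanish, so $J_0(\mu) \subseteq J_0 := \{ j \in J : (\halpha_j, \lambda) = 0 \}$. Since $\lambda \in P^+_J \subseteq D_J$, Proposition \ref{tits}(1) identifies the stabilizer of $\lambda$ in $W_J$ with $W_{J_0}$; our hypothesis that this isotropy is finite then forces $\fl_{J_0}$, and hence $\fl_{J_0(\mu)}$, to be of finite type.

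With these hypotheses in hand, parts (1) and (2) of Proposition \ref{nonde} applied to the integrable $\fl_J$-module $L_{\fl_J}(\lambda - \mu)$, combined with the $W_J$-invariance of its character (Lemma \ref{wint}(2)), yield
\[
\wt L_{\fl_J}(\lambda - \mu) \ = \ W_J \cdot \{ \nu \in P^+_J : \lambda - \mu - \nu \in \Z^{\geqslant 0} \pi_J \}.
\]
Substituting into \eqref{Eslice} and observing that every $\nu \in P^+_J$ with $\nu \leqslant \lambda$ admits a unique decomposition $\lambda - \nu = \mu + \mu_J$ with $\mu \in \Z^{\geqslant 0}(\pi \setminus \pi_J)$ and $\mu_J \in \Z^{\geqslant 0} \pi_J$, the union over slices collapses to the asserted $W_J \cdot \{ \nu \in P^+_J : \nu \leqslant \lambda \}$. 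The only mildly subtle point is extracting the finite-type condition on $J_0$ from the finite $W_J$-isotropy of $\lambda$, for which Proposition \ref{tits}(1) does the work cleanly; everything else is bookkeeping.
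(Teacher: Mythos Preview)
Your argument is correct and is exactly the approach the paper indicates: the corollary is stated as an immediate consequence of Proposition~\ref{nonde}(2) together with the Integrable Slice Decomposition~\ref{slice}, and you have simply filled in the routine details (checking $\lambda-\mu\in P^+_J$, extracting the finite-type condition on $J_0$ from finite isotropy via Proposition~\ref{tits}(1), and collapsing the slice union). There is nothing to change.
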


Equipped with the Integrable Slice Decomposition, we provide a
characterization of the weights of a parabolic Verma module that will be
helpful in Section \ref{Sbump}.

\begin{pro}\label{noholes}
\begin{equation}
\wt M(\lambda, J) = (\lambda + \Z \pi) \cap \conv M(\lambda, J).
\end{equation}
\end{pro}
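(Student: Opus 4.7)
The inclusion $\wt M(\lambda, J) \subseteq (\lambda + \Z\pi) \cap \conv M(\lambda, J)$ is immediate: every weight lies in the convex hull and in $\lambda - \Z^{\geqslant 0}\pi \subseteq \lambda + \Z\pi$. For the reverse inclusion, the plan is to fix $\nu \in (\lambda + \Z\pi) \cap \conv M(\lambda, J)$ and first observe that $\conv M(\lambda, J) \subseteq \lambda - \R^{\geqslant 0}\pi$, so by the linear independence of the simple roots $\pi$ we have $\nu \in \lambda - \Z^{\geqslant 0}\pi$. Decomposing $\lambda - \nu = \mu_J + \mu_{I\setminus J}$ along $\pi_J \sqcup (\pi \setminus \pi_J)$ with $\mu_J \in \Z^{\geqslant 0}\pi_J$ and $\mu_{I\setminus J} \in \Z^{\geqslant 0}(\pi \setminus \pi_J)$, the Integrable Slice Decomposition (Proposition \ref{slice}) reduces the claim to showing $\nu \in \wt L_{\fl_J}(\lambda - \mu_{I\setminus J})$.

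The next step is to exploit $W_J$-equivariance. Both $\conv M(\lambda, J)$ and $\lambda + \Z\pi$ are $W_J$-invariant---the first by Lemma \ref{wint}(2), the second because $\lambda \in P^+_J$ and $W_J$ preserves $\Z\pi$. Moreover $\conv M(\lambda, J) \subseteq C_J$ by Proposition \ref{slice} combined with the convexity of the $J$ Tits cone (Proposition \ref{tits}(3)). Translating $\nu$ into $D_J$ by some element of $W_J$, which preserves membership in $\wt L_{\fl_J}(\lambda - \mu_{I\setminus J})$, we may assume $\nu \in D_J \cap (\lambda + \Z\pi) = P^+_J$; similarly $\lambda - \mu_{I\setminus J} \in P^+_J$. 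Applying Proposition \ref{nonde}(1) to the integrable $\fl_J$-module $L_{\fl_J}(\lambda - \mu_{I\setminus J})$, the problem further reduces to verifying that $\nu$ is nondegenerate with respect to $\lambda - \mu_{I\setminus J}$, i.e., that no connected component of $\supp \mu_J$ is perpendicular to $\lambda - \mu_{I\setminus J}$.

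The hard part---the main obstacle---is establishing this nondegeneracy from the hypothesis $\nu \in \conv M(\lambda, J)$. My plan is to use the identification $M(\lambda, J) \cong U(\fu_J^-) \otimes L^{\max}_{\fl_J}(\lambda)$ as $\h$-modules (Proposition \ref{para}) to obtain the Minkowski decomposition $\conv M(\lambda, J) = \conv W_J\lambda - \R^{\geqslant 0}(\Delta^+ \setminus \Delta^+_J)$, write $\nu = \omega - \gamma$ with $\omega \in \conv W_J\lambda$ and $\gamma \in \R^{\geqslant 0}(\Delta^+ \setminus \Delta^+_J)$, and run a coordinate-wise analysis. Suppose for contradiction that some connected component $C \subseteq \supp \mu_J$ is perpendicular to $\lambda - \mu_{I\setminus J}$. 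Then $\lambda$ is $W_C$-fixed and $\supp \mu_{I\setminus J}$ consists only of simple roots disconnected from $C$ in the Dynkin diagram; moreover, the maximality of $C$ as a connected component of $\supp \mu_J$ ensures that the Dynkin neighbors of $C$ in $J$ lie outside $\supp \mu_J$. Balancing coordinates at such a neighbor $g$---where $(\mu_J)_g = 0$ while both $(\lambda - \omega)_g$ and $\gamma_g$ are nonnegative---forces both to vanish, and iterating this together with the connectedness of root supports shows that every $\alpha$ contributing to $\gamma$ has $\supp \alpha \cap C = \emptyset$, hence $\gamma$ makes no contribution in the $\pi_C$ direction. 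The concluding convex-geometric fact---that $\omega \in \conv W_J\lambda$ with vanishing drop along the Dynkin neighbors of $C$ in $J$ must have vanishing drop along $\pi_C$---can then be checked orbit-point by orbit-point via reduced-expression formulas, exploiting the $W_C$-invariance of $\lambda$. This contradicts $(\lambda - \omega)_C = (\mu_J)_C > 0$ and completes the argument.
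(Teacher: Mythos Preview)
Your proof is correct, but it takes a noticeably longer path than the paper's. The paper does not pass through the Minkowski decomposition $\conv M(\lambda,J) = \conv W_J\lambda - \R^{\geqslant 0}(\Delta^+\setminus\Delta^+_J)$ at all. Instead, having reduced (exactly as you do) to $\nu$ being $J$-dominant, it simply writes $\nu$ as a convex combination of \emph{actual weights} of $M(\lambda,J)$:
\[
\nu = \sum_k t_k(\lambda - \mu^k_{I\setminus J} - \mu^k_J),
\]
and observes that each summand, being a weight in the slice $L_{\fl_J}(\lambda - \mu^k_{I\setminus J})$, is already non-degenerate with respect to $\lambda - \mu^k_{I\setminus J}$ (this was established inside the proof of Proposition~\ref{slice}). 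A one-line sign argument---$(\halpha_c,\lambda)\geqslant 0$ and $(\halpha_c,\mu^k_{I\setminus J})\leqslant 0$ for $c\in J$---then shows that if a component $C$ of $\supp(\sum_k t_k\mu^k_J)$ were perpendicular to $\lambda - \sum_k t_k\mu^k_{I\setminus J}$, it would already be perpendicular to each $\lambda - \mu^k_{I\setminus J}$, contradicting non-degeneracy of the summands.

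Your route instead splits $\nu = \omega - \gamma$ and runs a separate coordinate analysis on each piece. This works, and the analysis of $\gamma$ is clean (no ``iteration'' is actually needed: once $\gamma_g = 0$ at every neighbor $g$ of $C$ in $I$, connectedness of $\supp\alpha$ for $\alpha\in\Delta^+\setminus\Delta^+_J$ forces $\supp\alpha\cap C = \emptyset$ in one step). The ``concluding convex-geometric fact'' about $\omega$ is true, but your sketch via reduced-expression formulas is the weakest link: the na\"ive formula $\lambda - w\lambda = \sum_k (\halpha_{i_k},\lambda)\beta_k$ does allow $\beta_k$ with $\supp\beta_k\subseteq C$ and $i_k\notin C$, so one cannot immediately conclude term by term. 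The cleanest justification is to note that each $w\lambda$ is a weight of $L_{\fl_J}(\lambda)$, hence non-degenerate with respect to $\lambda$; then vanishing of $(\lambda - w\lambda)_g$ at the $J$-neighbors of $C$ forces $\supp(\lambda - w\lambda)\cap C$ to be a union of connected components perpendicular to $\lambda$, which must therefore be empty. But this is essentially the paper's mechanism applied to extremal weights---so your argument, once made rigorous, ends up invoking the same idea the paper uses directly and more economically.
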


\begin{proof}
The inclusion $\subset$ is immediate. For the reverse $\supset$, note
that $\conv M(\lambda, J)$ lies in the $J$ Tits cone and is $W_J$
invariant. It then suffices to consider a point $\nu$ of the right-hand
side which is $J$ dominant. Write $\nu$ as a convex combination:
\[
\nu = \sum_{k=1}^n t_k (\lambda - \mu^k_{I \setminus J} - \mu^k_{J}),
\]
where $t_k \in \R^{> 0}, \sum_k t_k = 1, \mu^k_{I \setminus J} \in
\Z^{\geqslant 0} \pi_{I \setminus J}, \mu^k_J \in \Z^{\geqslant 0} \pi_J,
1 \leqslant k \leqslant n$. By Propositions \ref{nonde} and \ref{slice},
it remains to observe that $\nu$ is non-degenerate with respect to
$\lambda - \sum_k t_k \mu^k_{I \setminus J}$, as a similar statement
holds for each summand.
\end{proof}

\subsection{The Ray Decomposition}

Using the Integrable Slice Decomposition, we obtain the following novel
description of $\conv M(\lambda, J)$, which is of use in the proof of the
main theorem and throughout the paper.

\begin{pro}[Ray Decomposition]\label{ray}
\begin{equation}
\conv M(\lambda, J) = \conv \bigcup_{w \in W_J,\ i \in I \setminus J} w
(\lambda - \Z^{\geqslant 0} \alpha_i).
\end{equation}
When $J = I$, by the right-hand side we mean $\conv \bigcup_{w \in W} w
\lambda$. 
\end{pro}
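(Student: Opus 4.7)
The inclusion $\supset$ is the easy direction. Applying the Integrable Slice Decomposition (Proposition \ref{slice}) with $\mu = n\alpha_i$ for $i \in I \setminus J$ and $n \geqslant 0$ shows that $\lambda - n\alpha_i$ is a weight of $M(\lambda, J)$, as it is the highest weight of the summand $L_{\fl_J}(\lambda - n\alpha_i)$. Since $M(\lambda, J)$ is $J$-integrable, its set of weights is $W_J$-invariant by Lemma \ref{wint}(2), so $w(\lambda - n\alpha_i)$ is a weight for any $w \in W_J$. This gives the inclusion on the nose (even before taking convex hulls). The degenerate case $J = I$ is immediate from Proposition \ref{nonde}(3) applied to the integrable module $L^{\max}(\lambda) = M(\lambda, I)$.

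For the reverse inclusion $\subset$, the strategy is to reduce via the Integrable Slice Decomposition to an assertion about each integrable slice, and then exhibit an explicit convex combination. Concretely, by Proposition \ref{slice} it suffices to show
\[
\wt L_{\fl_J}(\lambda - \mu) \subset \conv \bigcup_{w \in W_J,\ i \in I \setminus J} w(\lambda - \Z^{\geqslant 0} \alpha_i)
\]
for every $\mu \in \Z^{\geqslant 0}(\pi \setminus \pi_J)$. First, I would verify that $\lambda - \mu$ is $J$-dominant integral: for $j \in J$ and $i \in I \setminus J$ one has $a_{ji} \leqslant 0$, so pairing with $\halpha_j$ only increases the value and preserves integrality. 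Then Proposition \ref{nonde}(3), applied to the integrable simple $\fl_J$-module $L_{\fl_J}(\lambda - \mu)$, yields $\wt L_{\fl_J}(\lambda - \mu) \subset \conv\bigl(W_J(\lambda - \mu)\bigr)$.

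The right-hand side of the proposition is manifestly $W_J$-invariant, since $W_J$ permutes the index set of the union, so its convex hull is $W_J$-stable. It therefore suffices to place $\lambda - \mu$ itself in this convex hull. Writing $\mu = \sum_{i \in I \setminus J} n_i \alpha_i$ with $n_i \in \Z^{\geqslant 0}$ and $N := \sum_i n_i$, the identity
\[
\lambda - \mu = \sum_{i \in \supp \mu} \frac{n_i}{N}\bigl(\lambda - N\alpha_i\bigr)
\]
displays $\lambda - \mu$ as the desired convex combination of points on the rays $\lambda - \Z^{\geqslant 0}\alpha_i$ for $i \in I \setminus J$ (the case $\mu = 0$ is trivial, with $\lambda$ lying on each ray as the $n=0$ endpoint).

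The main technical obstacle is really just the bookkeeping in the second paragraph: one must ensure Proposition \ref{nonde}(3) is legitimately applicable to the shifted highest weight $\lambda - \mu$, which is what the $J$-dominant integrality check accomplishes. The convex combination identity in the last paragraph is the key geometric observation, and once it is written down the proof is essentially immediate.
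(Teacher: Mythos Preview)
Your proof is correct and follows essentially the same route as the paper's, just with more detail made explicit. The paper's version is terser: for $\supset$ it cites the definition of integrability directly rather than the Integrable Slice Decomposition, and for $\subset$ it simply observes that the right-hand side contains $W_J(\lambda - \Z^{\geqslant 0}\pi_{I\setminus J})$ and invokes the Integrable Slice Decomposition, leaving your convex-combination identity and the appeal to Proposition~\ref{nonde}(3) implicit.
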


\begin{proof}
The containment $\supset$ is straightforward using the definition of
integrability and $W_J$ invariance. For the containment $\subset$, it
suffices to show every weight of $M(\lambda, J)$ lies in the right-hand
side. Note that the right-hand side contains $W_J(\lambda - \Z^{\geqslant
0} \pi_{I \setminus J})$, so we are done by the Integrable Slice
Decomposition \ref{slice}.
\end{proof}


\subsection{Proof of the main result}

With the Integrable Slice and Ray Decompositions in hand, we now turn to
our main theorem.

\begin{theo}\label{intgrp}
Given a highest weight module $V$ and a subset $J \subset I$, the
following statements are equivalent.
\begin{enumerate}
\item One has the equality of subsets of vertices of the Dynkin diagram
$J = I_V$.

\item One has the equality of convex hulls $\conv \wt V = \conv \wt
M(\lambda, J)$.

\item The stabilizer of $\conv V$ in $W$ is $W_J$.
\end{enumerate}
\end{theo}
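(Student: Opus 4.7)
The plan is to prove the cyclic chain $(1) \Rightarrow (2) \Rightarrow (3) \Rightarrow (1)$, with the Integrable Slice Decomposition (Proposition \ref{slice}) and the Ray Decomposition (Proposition \ref{ray}) as the essential inputs.

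For $(1) \Rightarrow (2)$, write $J = I_V$. The universal property recalled in Equation \eqref{Ecorep} yields a surjection $M(\lambda, J) \twoheadrightarrow V$, hence $\conv V \subset \conv M(\lambda, J)$. For the reverse inclusion, the Ray Decomposition reduces matters to showing $w(\lambda - \Z^{\geqslant 0}\alpha_i) \subset \conv V$ for each $w \in W_J$ and $i \in I \setminus J$. By the $W_J$-invariance of $\wt V$ (Lemma \ref{wint}(2)), it suffices to place the ray $\lambda - \Z^{\geqslant 0}\alpha_i$ inside $\wt V$. But $i \notin I_V$ means $f_i$ does not act nilpotently on $v_\lambda$, so the $\mathfrak{sl}_2$-submodule of $V$ generated by $v_\lambda$ under $e_i, f_i, \halpha_i$ must be a Verma module, yielding $f_i^n v_\lambda \neq 0$ for all $n \geq 0$.

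For $(3) \Rightarrow (1)$, I would establish both inclusions of $J = I_V$ separately. If $i \in I_V$, then Lemma \ref{wint}(2) gives $s_i \in \St_W(\conv V) = W_J$, so $i \in J$. For the reverse $J \subset I_V$: given $j \in J$, $s_j$ stabilizes $\conv V$, so for every $n \geq 0$ with $\lambda - n\alpha_j \in \wt V$, the image $s_j(\lambda - n\alpha_j) = \lambda + (n - (\halpha_j, \lambda))\alpha_j$ must lie in $\conv V \subset \lambda - \R^{\geqslant 0}\pi$. By linear independence of $\pi$, this forces $(\halpha_j, \lambda) \in \R^{\geqslant 0}$ and $n \leq (\halpha_j, \lambda)$, so $f_j$ acts nilpotently on $v_\lambda$; standard $\mathfrak{sl}_2$-theory then upgrades this to $(\halpha_j, \lambda) \in \Z^{\geqslant 0}$ together with $f_j^{(\halpha_j, \lambda) + 1} v_\lambda = 0$, i.e., $j \in I_V$.

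The step I expect to be the main obstacle is $(2) \Rightarrow (3)$, specifically the inclusion $\St_W(\conv M(\lambda, J)) \subset W_J$ (the reverse is immediate from the Ray Decomposition). Given $w$ in this stabilizer, I would factor $w = w_J w'$ with $w_J \in W_J$ and $w'$ a minimal-length representative of its coset in $W_J \backslash W$; such $w'$ satisfies $w'^{-1}\alpha_j > 0$ for all $j \in J$, and since $W_J$ lies in the stabilizer, so does $w'$. Supposing for contradiction $w' \neq e$, there is a simple root $\alpha_i$ with $w'^{-1}\alpha_i < 0$, and the minimality property forces $i \in I \setminus J$; writing $w'^{-1}\alpha_i = -\beta$ for some $\beta \in \Delta^+$, applying $w'^{-1}$ to the ray $\lambda - \Z^{\geqslant 0}\alpha_i \subset \conv M(\lambda, J)$ produces points $w'^{-1}\lambda + n\beta$, which must remain in $\lambda - \R^{\geqslant 0}\pi$. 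But $\beta$ has strictly positive simple-root coefficients, so this fails for large $n$, the desired contradiction; hence $w' = e$ and $w = w_J \in W_J$.
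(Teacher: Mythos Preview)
Your proof is correct, and your argument for the hard inclusion $\St_W(\conv M(\lambda,J)) \subset W_J$ is genuinely different from the paper's. The paper first observes that $\lambda$ is an exposed point of $\conv M(\lambda,J)$, uses the Ray Decomposition to identify all $0$-faces as $W_J\lambda$, and thereby reduces to $w\lambda = \lambda$; it then invokes the characterization of $W_J$ as the subgroup of $W$ preserving $\Delta^+ \setminus \Delta^+_J$, and checks this preservation using Proposition~\ref{noholes} together with an $\mathfrak{sl}_2$-integrability argument. Your route instead factors $w = w_J w'$ through a minimal coset representative and derives a contradiction directly from the unboundedness of $w'^{-1}(\lambda - \Z^{\geqslant 0}\alpha_i)$ in the $\pi$-direction; this is more elementary, in that it needs neither the face analysis nor Proposition~\ref{noholes}, only that $\conv M(\lambda,J) \subset \lambda - \R^{\geqslant 0}\pi$ and that $\pi$ is linearly independent. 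The paper's approach, on the other hand, establishes the stronger standalone statement that $\St_W(\conv V) = W_{I_V}$ for \emph{every} highest weight module $V$, from which all equivalences drop out at once; your cyclic chain recovers this only a posteriori. Two small cosmetic points: in $(1)\Rightarrow(2)$ you do not need the phrase ``must be a Verma module''---all you use is $f_i^n v_\lambda \neq 0$ for all $n$, which is exactly the statement $i \notin I_V$; and in $(3)\Rightarrow(1)$ the upgrade to $(\halpha_j,\lambda)\in\Z^{\geqslant 0}$ is exactly Lemma~\ref{wint}(1).
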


\noindent The connection to perverse sheaves promised in Section
\ref{Sintro} is proven in Proposition \ref{poles}.

\begin{proof}
To show (1) implies (2), note that $\lambda - \Z^{\geqslant 0} \alpha_i
\subset \wt V, \forall i \in \pi_{I \setminus I_V}$. The implication now
follows from the Ray Decomposition \ref{ray}.
The remaining implications follow from the assertion that the stabilizer
of $\conv V$ in $W$ is $W_{I_V}$. Thus, it remains to prove the
assertion. It is standard that $W_{I_V}$ preserves $\conv V$. For the
reverse, since (1) implies (2), we may reduce to the case of $V =
M(\lambda, J)$.
Suppose $w \in W$ stabilizes $\conv M(\lambda, J)$. It is easy to see
$\lambda$ is a face of $\conv M(\lambda,J)$, hence so is $w \lambda$.
However by the Ray Decomposition \ref{ray}, it is clear that the only
$0$-faces of $\conv M(\lambda,J)$ are $W_{J} (\lambda)$, so without loss
of generality we may assume $w$ stabilizes $\lambda$. 

Recalling that $W_{J}$ is exactly the subgroup of $W$ which preserves
$\Delta^+ \setminus \Delta^+_J$, it suffices to show $w$ preserves
$\Delta^+ \setminus \Delta^+_J$. Let $\alpha \in \Delta^+ \setminus
\Delta^+_J$; then by Proposition \ref{para}, $\lambda - \Z^{\geqslant 0}
\alpha \subset \wt M(\lambda, J)$, whence so is $\lambda - \Z^{\geqslant
0} w(\alpha)$ by Proposition \ref{noholes}. This shows that $w(\alpha) >
0$; it remains to show $w(\alpha) \notin \Delta^+_J$. It suffices to
check this for the simple roots $\alpha_i, i \in I \setminus J$. Suppose
not, i.e.~$w(\alpha_i) \in \Delta^+_J$ for some $i \in I \setminus J$. In
this case, note $w(\alpha_i)$ must be a real root of $\Delta^+_J$, e.g.
by considering $2w(\alpha_i)$. Thus the $w(\alpha_i)$ root string
$\lambda - \Z^{\geqslant 0} w(\alpha_i)$ is the set of weights of an
integrable representation of $\g_{-w(\alpha_i) } \oplus [
\g_{-w(\alpha_i)}, \g_{w(\alpha_i)}] \oplus \g_{w(\alpha_i) } \simeq
\lie{sl}_2$, which is absurd.
\end{proof}

\begin{cor}
The stabilizer of $\ch V$ in $W$ is $W_{I_V}$.
\end{cor}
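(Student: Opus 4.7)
The plan is to derive this corollary as a quick consequence of Theorem \ref{intgrp}, together with Lemma \ref{wint}(2).

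First I would establish the inclusion $W_{I_V} \subset \St_W(\ch V)$. This is immediate from Lemma \ref{wint}(2), which asserts that $\ch V$ is $W_{I_V}$-invariant whenever $V$ is $I_V$-integrable.

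For the reverse inclusion, suppose $w \in W$ fixes $\ch V = \sum_{\mu \in \h^*} (\dim V_\mu)\, e^\mu$. Then $\dim V_{w\mu} = \dim V_\mu$ for every $\mu \in \h^*$, so $w$ permutes the support $\wt V$. Consequently $w$ stabilizes the convex hull $\conv V = \conv \wt V$. By the assertion inside the proof of Theorem \ref{intgrp} that the stabilizer of $\conv V$ in $W$ is $W_{I_V}$, we conclude $w \in W_{I_V}$.

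There is really no obstacle here; the work has all been done in Theorem \ref{intgrp}. The only subtlety worth flagging is to be clear that stabilizing the formal character is \emph{a priori} a stronger condition than stabilizing $\conv V$, but that the weaker condition already forces membership in $W_{I_V}$, so the two stabilizers coincide.
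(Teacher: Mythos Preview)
Your argument is correct and is exactly the intended one: the paper states this corollary without proof immediately after Theorem \ref{intgrp}, relying on precisely the two facts you invoke, namely Lemma \ref{wint}(2) for one inclusion and the stabilizer assertion from Theorem \ref{intgrp} for the other.
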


\begin{cor}\label{Cpoly}
For any $V$, $\conv V$ is the Minkowski sum of $\conv W_{I_V}(\lambda)$
and the cone $\R^{\geqslant 0} W_{I_V}(\pi_{I \setminus I_V})$.
\end{cor}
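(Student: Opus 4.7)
The plan is to combine the two principal tools developed in this section. By Theorem \ref{intgrp}, the equality $\conv V = \conv M(\lambda, I_V)$ holds, so it suffices to establish the Minkowski decomposition for the parabolic Verma module $M(\lambda, I_V)$. For this, I would invoke the Ray Decomposition (Proposition \ref{ray}), which gives
\[
\conv M(\lambda, I_V) = \conv \bigcup_{w \in W_{I_V},\, i \in I \setminus I_V} \bigl( w\lambda - \R^{\geqslant 0} w\alpha_i \bigr),
\]
and then appeal to the elementary convex-geometric fact that the convex hull of a family of rays with specified base points and directions decomposes as the Minkowski sum of the convex hull of the base points with the convex cone generated by the direction vectors.

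For the two inclusions: the containment $\supset$ is immediate, since each point of the form $w\lambda + t(-w\alpha_i)$ with $w \in W_{I_V}$, $i \in I \setminus I_V$, and $t \geqslant 0$ is a point on one of the rays appearing in the Ray Decomposition. For the containment $\subset$, one writes a typical convex combination drawn from finitely many rays as $\sum_k t_k(w_k\lambda + s_k(-w_k\alpha_{i_k}))$ with $t_k \geqslant 0$, $\sum_k t_k = 1$, and $s_k \geqslant 0$, and regroups it as $\sum_k t_k w_k\lambda + \sum_k (t_k s_k)(-w_k\alpha_{i_k})$, exhibiting the combination as a sum of an element of $\conv W_{I_V}(\lambda)$ and an element of the cone $\R^{\geqslant 0} W_{I_V}(\pi_{I \setminus I_V})$ (the sign on $\pi_{I \setminus I_V}$ being fixed by the fact that the weights of $V$ lie weakly below $\lambda$).

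The only subtlety worth flagging is that $W_{I_V}$ may well be infinite, so that the Ray Decomposition involves an infinite union of rays. However, since every convex combination is by definition a finite sum, the elementary regrouping argument above applies verbatim, and this is really a matter of bookkeeping rather than a genuine obstacle. Hence I do not anticipate any hard step in the proof: the work has already been done in establishing Theorem \ref{intgrp} and the Ray Decomposition, and the corollary is a formal consequence.
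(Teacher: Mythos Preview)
Your reduction to $M(\lambda, I_V)$ via Theorem \ref{intgrp} and your argument for the inclusion $\conv M(\lambda, I_V) \subset \conv W_{I_V}(\lambda) - \R^{\geqslant 0} W_{I_V}(\pi_{I \setminus I_V})$ by regrouping a convex combination of ray points are both correct and match the paper.

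The gap is in your containment $\supset$. The ``elementary convex-geometric fact'' you invoke --- that the convex hull of a family of rays equals the Minkowski sum of the hull of the base points with the cone on the directions --- is false in general. In $\R^3$, take the two rays $(0,0,0) + \R^{\geqslant 0}(1,0,0)$ and $(0,1,0) + \R^{\geqslant 0}(0,0,1)$: the point $(1,0,1)$ lies in the Minkowski sum $\conv\{(0,0,0),(0,1,0)\} + \R^{\geqslant 0}\{(1,0,0),(0,0,1)\}$, but any convex combination $a(t,0,0)+(1-a)(0,1,s)$ has second coordinate $1-a$, so forcing this to be $0$ kills the third coordinate as well. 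Your justification only places the individual rays inside $\conv M(\lambda, I_V)$; since those rays do not generate the Minkowski sum under convex combinations, this does not yield $\supset$.

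The paper closes this gap by invoking Equation \eqref{Everma}, namely $\wt M(\lambda, I_V) = \wt L_{\fl_{I_V}}(\lambda) - \Z^{\geqslant 0}(\Delta^+ \setminus \Delta^+_{I_V})$. Taking convex hulls (and using $\conv \wt L_{\fl_{I_V}}(\lambda) = \conv W_{I_V}\lambda$) gives
\[
\conv M(\lambda, I_V) \;\supset\; \conv W_{I_V}\lambda - \R^{\geqslant 0}(\Delta^+ \setminus \Delta^+_{I_V}) \;\supset\; \conv W_{I_V}\lambda - \R^{\geqslant 0} W_{I_V}\pi_{I \setminus I_V}.
\]
The content here is that \eqref{Everma} makes the \emph{entire} cone of directions accessible from each vertex $w\lambda$, not merely the directions $-w\alpha_i$ indexed by that particular $w$; this is exactly what the Ray Decomposition by itself does not provide, and what fails in the counterexample above.
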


\begin{proof}
By Theorem \ref{intgrp}, we reduce to the case of $V$ a parabolic Verma
module. Now the result follows from the Ray Decomposition \ref{ray} and
Equation \eqref{Everma}.
\end{proof}

\begin{re}
For $\g$ semisimple, the main theorem \ref{intgrp} and the Ray
Decomposition \ref{ray} imply that the convex hull of weights of any
highest weight module $V$ is a $W_{I_V}$-invariant polyhedron. To our
knowledge, this was known for many but not all highest weight modules by
recent work \cite{Kh1}, and prior to that only for parabolic Verma
modules. We develop many other applications of the main theorem to the
convex geometry of $\conv V$, including the classification of its faces
and their inclusions, in the companion work \cite{DK2}.
\end{re}

For completeness, we record here the description of the extremal rays of
the convex shape $\conv V$, which is only implicit in the work
\cite{DK2}. 

\begin{cor} The extremal rays in $\conv V$ are the rays 
\begin{equation} \label{e:extrays}
       w( \lambda - \mathbb{R}^{\geqslant 0} \alpha_i), \quad \text{for all } w \in W_{I_V} \text{ and }i \in I \setminus I_V. 
\end{equation}
%
\end{cor}

See also the recent preprint \cite{Teja}, where G.K.~Teja employs a
different argument to prove the same result.

\begin{proof} By Theorem~\ref{intgrp}, we may assume $V$
is a parabolic Verma module. We will first show that any extremal ray $R$ of $\conv V$ must lie in the collection \eqref{e:extrays}. To see this, by the Ray Decomposition \ref{ray}, we may write an interior point $r$ of $R$ as a convex combination of points $r_{w, i}$ of the rays in \eqref{e:extrays}. If any $r_{w, i}$ is an interior point of the corresponding ray of \eqref{e:extrays}, we are done. If not, then $R$ would be an extremal ray of $\conv W_{I_V} (\lambda)$. However, as each $$w \lambda,\quad \quad  \text{ for } w \in W_{I_V},$$is a face of $\conv W_{I_V}(\lambda)$, this straightforwardly implies that $\conv W_{I_V}(\lambda)$  contains no extremal rays.

It remains to check that each element of \eqref{e:extrays} is extremal, i.e. is a face of $\conv V$. As $W_{I_V}$ acts by automorphisms of $\conv V$, we may assume $w = e$. For a fixed $i \in I \setminus I_V$, it suffices to show that the ray $$\lambda - \mathbb{R}^{\geqslant 0} \alpha_i$$maximizes the restriction to $\conv V$ of a real functional $\xi: \h^* \rightarrow \mathbb{R}$. To see this, one may pick any $\xi$ satisfying 
\[
 \xi(\alpha_i) =  0 \quad \text{and} \quad \xi(\alpha_j) > 0, \quad \text{for all $j \in I \setminus i$}, 
\]
which completes the proof.
\end{proof}

\section{Three positive formulas for the weights of simple
modules}\label{Sbump}

We remind the three positive formulas for the weights
of simple highest weight modules to be obtained in this section. 

\begin{pro}\label{pp1}
Write $\fl$ for the Levi subalgebra corresponding to $I_\lambda$,
and write $L_\fl(\nu)$ for the simple $\fl$ module of highest weight $\nu
\in \h^*$. Then:
\begin{equation}
\wt L(\lambda) = \bigsqcup_{\mu \in \Z^{\geqslant 0} \pi \setminus
\pi_{I_\lambda} } \wt L_{\mathfrak{l}}(\lambda - \mu).
\end{equation}
\end{pro}

\begin{pro}\label{pp3}
\begin{equation}
\wt L(\lambda) = \conv L(\lambda) \cap \{\mu \in \h^*: \mu \leqslant
\lambda \}.
\end{equation} 
\end{pro}

\begin{pro}\label{pp2}
Suppose $\lambda$ has finite stabilizer in $W_{I_\lambda}$. Then:
\begin{equation}
\wt L(\lambda) = W_{I_\lambda} \{ \mu \in P^+_{I_\lambda}: \mu
\leqslant \lambda \}.
\end{equation}
\end{pro}

\begin{re} For an extension of Proposition \ref{pp2} to arbitrary
$\lambda$, see the companion work \cite{DK2}. \end{re}

Note that the three weight formulas above are immediate consequences of
combining the following theorem with Propositions \ref{slice} and
\ref{noholes} and Corollary \ref{Cfinite} respectively. Recall from
Section \ref{Sbump-results} that for a highest weight module $V$, we
defined its {\em potential integrability} to be $I^p_V := I_\lambda
\setminus I_V$. 

\begin{theo}\label{weights}
Let $V$ be a highest weight module, $V_\lambda$ its highest weight line.
Let $\fl$ denote the Levi subalgebra corresponding to $I^p_V$. Then $\wt
V = \wt M(\lambda, I_V)$ if and only if
\[
\wt U(\mathfrak{l}) V_\lambda = \wt U(\fl) M(\lambda, I_V)_\lambda,
\]
i.e.
$\wt U(\fl) V_\lambda = \lambda - \Z^{\geqslant 0} \pi_{I^p_V}$. In
particular, if $V = L(\lambda)$, then $\wt L(\lambda) = \wt M(\lambda,
I_\lambda)$.
\end{theo}

\begin{proof}
It is clear, e.g.~by the Integrable Slice Decomposition \ref{slice} that
if $\wt V = \wt M(\lambda, I_V)$, then $\wt_{I^p_V} V = \lambda -
\Z^{\geqslant 0} \pi_{I^p_V}$. For the reverse implication, by again
using the Integrable Slice Decomposition and the action of $\fl_{I_V}$,
it suffices to show that $\lambda - \Z^{\geqslant 0} \pi_{I \setminus
I_V} \subset \wt V$.

Suppose not, i.e.~there exists $\mu \in \Z^{\geqslant 0} \pi_{I \setminus
I_V}$ with $V_{\lambda - \mu} = 0$. We decompose $I \setminus I_V = I^p_V
\sqcup I^q$, and write accordingly $\mu = \mu_p + \mu_q$, where $\mu_p
\in \Z^{\geqslant 0} \pi_{I^p_V}$ and $\mu_q \in \Z^{\geqslant 0}
\pi_{I^q}$.
By assumption $V_{\lambda - \mu_p}$ is nonzero, so by the PBW theorem,
there exists $\partial \in U(\n^-_{I^p_V})$ of weight $-\mu_p$ such that
$\partial \cdot V_{\lambda}$ is nonzero. Now choose an enumeration of
$I^q = \{i_k : 1 \leqslant k \leqslant n \},$ write $\mu_q = \sum_k m_k
\alpha_{i_k}$, and consider the monomials $E := \prod_k e_{i_k}^{m_k}, F
:= \prod_k f_{i_k}^{m_k}$. By the vanishing of $V_{\lambda - \mu_p -
\mu_q}, \partial F$ annihilates the highest weight line $V_{\lambda}$,
whence so does $E \partial F$.
Since $[e_i, f_j] = 0\ \forall i \neq j$, we may write this as $\partial
\prod_k e_{i_k}^{m_k} f_{i_k}^{m_k}$, and each factor $e_{i_k}^{m_k}
f_{i_k}^{m_k}$ acts on $V_{\lambda}$ by a nonzero scalar, a
contradiction.
\end{proof}

\begin{re}
Given the delicacy of Jantzen's criterion for the simplicity of a
parabolic Verma module (see \cite{Jantzen,Jantzen-thesis}), it is
interesting that the equality on weights of $L(\lambda)$ and $M(\lambda,
I_\lambda)$ always holds.
\end{re}

\begin{re}
{Since the writing of this paper, G.K.~Teja has obtained
in~\cite{Teja}} a `minimal description' of the set of weights of
$L(\lambda)$ for arbitrary non-integrable $\lambda$ -- and more
generally, for any parabolic Verma module as in Equation~\eqref{Everma}.
{Explicitly, Teja shows that
\[
\wt M(\lambda, J) = \wt L_{\hspace{.5mm} \fl_J}(\lambda) -
\Z^{\geqslant 0} \Delta_{J^c, 1},
\]
where given a subset $J \subset I$ of simple roots, the set
$\Delta_{J^c,1}$ comprises the positive roots $\alpha_{i_1} + \cdots +
\alpha_{i_n}$ with ``$J^c$-height $1$'', i.e., with a unique $k$ such
that $i_k \not\in J$. (In particular, for $J = I_\lambda$ we obtain $\wt
L(\lambda)$.) The `minimality' of this description is the assertion that
the set $\Delta_{J^c, 1}$ in the above formula cannot be reduced.}
\end{re}

\section{Weights and convex hulls of highest weight
modules}\label{Slepowsky}

In Section \ref{Sbump}, we applied our main result to obtain formulas for
the weights of simple modules. We now explore the extent to which this
can be done for arbitrary highest weight modules.

\begin{theo}\label{eek}
Fix $\lambda \in \h^*$ and $J \subset I_\lambda$. Every highest
weight module $V$ of highest weight $\lambda$ and integrability $J$ has
the same weights if and only if $J^p := I_\lambda \setminus J$
is complete, i.e. $(\halpha_j, \alpha_{j'}) \neq 0, \forall j, j' \in
J^p$. 
\end{theo}


We first sketch our approach. The subcategory of modules $V$ with fixed
highest weight $\lambda$ and integrability $J$ is basically a poset
modulo scaling. The source here is the parabolic Verma module
$M(\lambda,J)$, and we are concerned with the possible vanishing of
weight spaces as we move away from the source. The most interesting
ingredient of the proof is the observation that there is a sink, which we
call $L(\lambda, J)$, and hence the question reduces to whether
$M(\lambda,J)$ and $L(\lambda,J)$ have the same weights.

\begin{lem}\label{Lkitty}
For $J \subset I_\lambda$, there is a minimal quotient $L(\lambda,
J)$ of $M(\lambda)$ satisfying the equivalent conditions of Theorem
\ref{intgrp}.
\end{lem}

\begin{proof}
Consider all submodules $N$ of $M(\lambda)$ such that $N_{\lambda -
\Z^{\geqslant 0} \alpha_i} = 0, \forall i \in I \setminus J$. There is a
maximal such, namely their sum $N'$, and it is clear that $L(\lambda, J)
= M(\lambda)/N'$ by construction. 
\end{proof}

As the reader is no doubt aware, the inexplicit construction of
$L(\lambda,J)$ here is parallel to that of $L(\lambda)$ -- in fact,
$L(\lambda) = L(\lambda, I_\lambda)$. The existence of the objects
$L(\lambda, J)$ was noted by the second named author in \cite{Kh1}, but
we are unaware of other appearances in the literature. In \cite{Kh1}, the
character of $L(\lambda, J)$ was not determined. However, it turns out to
be no more difficult than $\ch L(\lambda)$ under the following sufficient
condition, which as we explain below is often satisfied:

\begin{pro}\label{Pfinite}
Fix $\lambda \in \h^*, J \subset I_\lambda$. Suppose that
$\Ext^1_{\mathscr{O}}(L(\lambda), L(s_i \cdot \lambda)) \neq 0,\
\forall i \in I_\lambda \setminus J$. Then there is a short exact
sequence:
\begin{equation}\label{lj}
0 \rightarrow \bigoplus_{i \in I_\lambda \setminus J}L(s_i \cdot
\lambda) \rightarrow L(\lambda, J) \rightarrow L(\lambda) \rightarrow
0.
\end{equation}
\end{pro}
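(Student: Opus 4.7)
The plan is to realize $L(\lambda, J)$ as a fiber product of length-$2$ extensions. For each $i \in I_{L(\lambda)} \setminus J$, the hypothesis $\Ext^1_\OO(L(\lambda), L(s_i \cdot \lambda)) \neq 0$ furnishes a nontrivial extension
\[
0 \to L(s_i \cdot \lambda) \to E_i \to L(\lambda) \to 0.
\]
Let $\widetilde{E}$ denote the fiber product of the $E_i$ over $L(\lambda)$, i.e., the submodule of $\bigoplus_{i \in I_{L(\lambda)} \setminus J} E_i$ consisting of tuples with a common image in $L(\lambda)$. Then $\widetilde{E}$ is an extension of $L(\lambda)$ by $\bigoplus_i L(s_i \cdot \lambda)$, and we argue $L(\lambda, J) \simeq \widetilde{E}$ by producing mutual surjections.

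First I would analyze the individual $E_i$. A preimage $v_\lambda$ of a generator of $L(\lambda)_\lambda$ must generate $E_i$, for otherwise $U(\g) v_\lambda$ would intersect the simple socle $L(s_i \cdot \lambda)$ trivially and split the extension. Writing $k_j := (\halpha_j, \lambda) + 1$, the integrability of $E_i$ is $I_{L(\lambda)} \setminus \{i\}$: for $j \in I_{L(\lambda)} \setminus \{i\}$, the image of $f_j^{k_j} v_\lambda$ vanishes in $L(\lambda)$ and must lie in $L(s_i \cdot \lambda)_{s_j \cdot \lambda}$, which is zero since $s_j \cdot \lambda \not\leq s_i \cdot \lambda$; whereas for $j = i$, the one-dimensionality of $M(\lambda)_{s_i \cdot \lambda}$ together with nontriviality of the extension forces $f_i^{k_i} v_\lambda \neq 0$ in $E_i$. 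By the minimality characterization of $L(\lambda, I_{L(\lambda)} \setminus \{i\})$, there is a surjection $E_i \twoheadrightarrow L(\lambda, I_{L(\lambda)} \setminus \{i\})$; since the target has length $\geq 2$ (its integrability differs from that of $L(\lambda)$) while $E_i$ has length exactly $2$, these modules coincide.

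Next, since $I_{L(\lambda)} \setminus \{i\} \supset J$, the explicit construction of $L(\lambda, J)$ gives canonical surjections $L(\lambda, J) \twoheadrightarrow L(\lambda, I_{L(\lambda)} \setminus \{i\}) = E_i$, which assemble into a $\g$-module map $L(\lambda, J) \to \widetilde{E}$. Its image is generated by the diagonal tuple $(v_\lambda)_i$; for each fixed $i$, applying $f_i^{k_i}$ to this tuple yields zero in every component $i' \neq i$ and the socle generator of $L(s_i \cdot \lambda)$ in the $i$-th component, by the integrability computation above. Thus the image contains the entire socle of $\widetilde{E}$ and surjects onto $L(\lambda)$, forcing $L(\lambda, J) \twoheadrightarrow \widetilde{E}$. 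In particular $\widetilde{E}$ is itself a highest weight module of highest weight $\lambda$, and an analogous weight-space check confirms $I_{\widetilde{E}} = J$.

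Invoking minimality once more, the fact that $\widetilde{E}$ is a highest weight quotient of $M(\lambda)$ with integrability $J$ yields the reverse surjection $\widetilde{E} \twoheadrightarrow L(\lambda, J)$. Since both modules are of finite length $|I_{L(\lambda)} \setminus J| + 1$, these mutual surjections are necessarily mutually inverse isomorphisms, producing the desired short exact sequence. The main technical obstacle is verifying that $\widetilde{E}$ is actually generated by its highest weight line; this is precisely where the $\Ext^1$ nontriviality hypothesis is essential, since nontriviality in every direction is what ensures each socle component is reached by the $U(\g)$-action on the diagonal highest weight vector.
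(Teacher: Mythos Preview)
Your proof is correct and follows essentially the same approach as the paper. Your fiber product $\widetilde{E}$ is precisely the extension the paper constructs from the direct sum of the chosen $\Ext^1$ classes; the paper then argues more tersely that $M(\lambda)\to E$ is surjective via Jordan--H\"older content and deduces $E\twoheadrightarrow L(\lambda,J)$ directly from the definition of $L(\lambda,J)$, concluding by a length count. Your route through the identification $E_i\simeq L(\lambda,I_{L(\lambda)}\setminus\{i\})$ and the assembled map $L(\lambda,J)\to\widetilde{E}$ is a pleasant elaboration of the same idea rather than a different strategy. One small quibble: the two surjections you produce need not be mutually inverse, but the length equality forces each to be an isomorphism, which is all you need.
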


\begin{proof}
The choice of a nonzero class in each $\Ext^1_{\mathscr{O}}(L(\lambda),
L(s_i \cdot \lambda)), i \in I_\lambda \setminus J$ gives an
extension $E$ as in Equation \eqref{lj}. The space $E_\lambda$ is a
highest weight line, and we obtain an associated map $M(\lambda) \simeq
M(\lambda) \otimes E_\lambda \rightarrow E$. This is surjective, as
follows from an easy argument using Jordan--H\"older content and the
nontriviality of each extension. The consequent surjection $E \rightarrow
L(\lambda, J)$ is an isomorphism, by considering Jordan--H\"older content.
\end{proof}

\begin{re}
By identifying $\Ext^1_{\mathscr{O}}(L(\lambda),
L(s_i \cdot \lambda))$ with $\Hom(N(\lambda), L(s_i
\cdot \lambda))$, where $N(\lambda)$ is the maximal submodule of
$M(\lambda)$, one sees it is at most one dimensional. 

Suppose $\mathfrak{g}$ is symmetrizable, and that $\lambda$ is a (i)
noncritical weight which (ii) lies in the dot Weyl group orbit of a
dominant or antidominant weight. If $\lambda$ and $s_i \cdot \lambda$ are
distinct and lie in the same block, then we expect that the extension
group
\[
\Ext^1_{\mathscr{O}}(L(\lambda), L(s_i \cdot \lambda))
\]
is nonzero. Indeed, by work of Fiebig \cite{Fiebig}, we may assume that
$\lambda$ is integral. Let us sketch a proof in the case where $\lambda$
is regular and lies in the orbit of an antidominant weight. By
Kashiwara--Tanisaki localization \cite{kt}, we must equivalently show the
nonvanishing of the extensions between the corresponding intersection
cohomology sheaves on $B\backslash G/N$. Using parity vanishing, the
Cousin spectral sequence converging to $$\Ext^*_{\mathscr{O}}(L(\lambda),
L(s_i \cdot \lambda))$$ collapses on its first page. This reduces the
nonvanishing assertion to only the Schubert strata indexed by $s_i \cdot
\lambda$ and $\lambda$, where it is clear. 

A similar argument applies if $\lambda$ lies in the orbit of an
antidominant weight and has a finite stabilizer. In particular, this
covers all cases in finite type (where the result is well known, cf.
\cite[Theorem 8.15(c)]{H3} for $\lambda$ regular and see \cite[Corollary
1.3.5]{Irving} for $\lambda$ singular). A similar argument should apply
for orbits containing dominant weights with finite stabilizers, although
the convergence of the spectral sequence requires justification. One can
also see this nonvanishing directly if $\lambda$ is a regular dominant
weight by using the last two terms of the BGG resolution, cf. the proof
of Proposition \ref{dogs} below.
\end{re}

Returning to Theorem \ref{eek} for $\g$ possibly non-symmetrizable,
we first prove the following weaker form of Equation
\eqref{lj}.

\begin{pro}\label{dogs}
Let $\g$ be of arbitrary type and $\lambda \in P^+$. Then:
\begin{equation}\label{dinosaurs}
\wt L(\lambda, J) = \wt L(\lambda) \cup \bigcup_{i \in I \setminus J} \wt
L(s_i \cdot \lambda).
\end{equation}
\end{pro}

\begin{proof}
The restriction of $\ch M(\lambda)$ to the root string $\lambda -
\Z^{\geqslant 0} \alpha_i, i \in I \setminus J$ is entirely determined by
the Jordan--H\"older content $[M(\lambda): L(\lambda)] = [M(\lambda):
L(s_i \cdot \lambda)] = 1$. Using this, it suffices to exhibit a module
with weights given by the right-hand side of \eqref{dinosaurs}.

Consider the end of the BGG resolution:
\begin{equation}\label{bgge}
\bigoplus_{i \in I} M(s_i \cdot \lambda) \rightarrow M(\lambda)
\rightarrow L^{\max}(\lambda) \rightarrow 0.
\end{equation}
For each $i \in I$, let $N(s_i \cdot \lambda)$ denote the maximal
submodule of $M(s_i \cdot \lambda)$.
Consider the quotient $Q$ of $M(\lambda)$ by the image of $\bigoplus_{j
\in J} M(s_j \cdot \lambda) \oplus \bigoplus_{i \in I \setminus J} N(s_i
\cdot \lambda)$, under \eqref{bgge}. We deduce:
\[
\ch Q = \ch L^{\max}(\lambda) + \sum_{i \in I \setminus J} \ch L(s_i
\cdot \lambda),
\]
which implies that $\wt Q$, whence $\wt L(\lambda,J)$, coincides with the
right-hand side of \eqref{dinosaurs}.
\end{proof}

We now are ready to prove Theorem \ref{eek}. By the above results, it
suffices to determine when $\wt L(\lambda, J) = \wt M(\lambda, J)$.

\begin{proof}[Proof of Theorem \ref{eek}]
It will be clarifying, though not strictly necessary for the argument, to
observe the following compatibility of the construction of $L(\lambda,
J)$ and restriction to a Levi.

\begin{lem}\label{compa}
Let $I' \subset I$, $\fl = \fl_{I'}$ the corresponding Levi subalgebra,
and for $J \subset I$ write $J' = J \cap I'$. Then:
\[
L_{\hspace{.5mm} \fl}(\lambda, J') \simeq U(\fl) L(\lambda, J)_{\lambda},
\]
where $L_{\hspace{.5mm} \fl}(\lambda,J')$ is the smallest $\fl$-module
with integrability $J'$, constructed as in Lemma \ref{Lkitty}.
\end{lem}

\begin{proof}
Since the integrability of $U(\fl) L(\lambda, J)_{\lambda}$ is $J'$, we
have a surjection:
\begin{equation}\label{kitty}
U(\fl) L(\lambda, J)_{\lambda} \rightarrow L_{\hspace{.5mm} \fl}(\lambda,
J') \rightarrow 0.
\end{equation}

\noindent To see that Equation \eqref{kitty} is an isomorphism, we need
to show the kernel $K$ of $M_{\fl}(\lambda, J') \rightarrow
L_{\hspace{.5mm} \fl}(\lambda, J')$ is sent into the kernel of
$M(\lambda, J) \rightarrow L(\lambda, J)$ under the inclusion
$M_{\fl}(\lambda, J') \rightarrow M(\lambda, J)$. To do so, choose a
filtration of $K$:
\[
0 \subseteq K_1 \subseteq K_2 \subseteq \cdots, \qquad K = \cup_{m
\geqslant 0} K_m,
\]

\noindent where $K_m/K_{m-1}$ is generated as a $U(\fl)$-module by a
highest weight vector $v_m$. 

It suffices to prove by induction on $m$ that the $U(\g)$-module
generated by $K_m$ lies in the kernel of $M(\lambda, J) \rightarrow
L(\lambda, J)$, $\forall m \geqslant 0$. For the inductive step, suppose
that $M(\lambda, J)/U(\g) K_{m-1}$ has integrability $J$. Then choosing a
lift $\tilde{v}_m$ of $v_m$ to $K_m$, observe that $\tilde{v}_m$ is a
highest weight vector in $M(\lambda, J)/U(\g) K_{m-1}$ for the action of
all of $U(\g)$, by the linear independence of simple roots. It is clear
that $U(\g) \tilde{v}_m$ has trivial intersection with the $(\lambda -
\Z^{\geqslant 0} \alpha_i)$-weight spaces of $M(\lambda, J)/U(\g)
K_{m-1}$, for all $i \in I \setminus J$, hence $M(\lambda, J)/U(\g) K_m$
again has integrability $J$.
\end{proof}

We are now ready to prove Theorem \ref{eek}. By Theorem \ref{weights}, we
have $\wt L(\lambda, J) = \wt M(\lambda, J)$ if and only if $\wt_{J^p}
L(\lambda, J) = \wt_{J^p} M(\lambda, J)$. By Lemma \ref{compa}, we may
therefore assume that $J = \emptyset$, and $\lambda \in P^+$, and hence
$J^p = I$.

By Proposition \ref{dogs}, we have:
\begin{align*}
\wt L(\lambda, \emptyset) = &\ \wt L(\lambda) \cup \bigcup_{i \in I} \wt
L(s_i \cdot \lambda),\\
\wt L(\lambda, I \setminus i) = &\ \wt L(\lambda) \cup \wt L(s_i \cdot
\lambda), \quad \forall i \in I.
\end{align*}

\noindent Combining the two above equalities, we have:
\[
\wt L(\lambda, \emptyset) = \bigcup_{i \in I} \wt L(\lambda, I \setminus
i).
\]

\noindent For $L(\lambda, I \setminus i)$ the set of potentially
integrable directions is $\{i\}$, so we may apply Theorem \ref{weights}
to conclude:
\begin{equation}\label{pvf}
\wt L(\lambda, \emptyset) = \bigcup_{i \in I} \wt M(\lambda, I \setminus
i).
\end{equation}
Thus, it remains to show that for $\lambda$ dominant integral,
$\wt M(\lambda) = \bigcup_{i \in I} \wt M(\lambda, I \setminus
i)$ if and only if the Dynkin diagram of $\lie{g}$ is complete.

Suppose first that the Dynkin diagram of $\g$ is not complete.
Pick $i,i' \in I$ with $(\halpha_i, \alpha_{i'}) = 0$. Then $M(\lambda)$
and $M(\lambda)/ s_i s_{i'} \cdot M(\lambda)_\lambda$ have distinct
weights, as can be seen by a calculation in type $A_1 \times A_1$. 

Suppose now that the Dynkin diagram of $\g$ is complete. By \eqref{pvf}
it suffices to show:
\[
\wt M(\lambda, I \setminus i) \supset \{ \lambda - \sum_{j \in I} k_j
\alpha_j: k_j \in \Z^{\geqslant 0}, k_i \geqslant k_j, \forall j \in I
\}.
\]

\noindent But this follows using the assumption that every node is
connected to $i$, and the representation theory of $\lie{sl}_2$.
\end{proof}

\begin{re}
Theorem \ref{eek} can be proved without working directly with the objects
$L(\lambda,J)$, but instead using only the lower bound \eqref{dinosaurs}
for the weights:
\[
\wt V \supset \wt L(\lambda, I_V) = \wt L(\lambda) \cup \bigcup_{i \in
I_\lambda \setminus I_V} \wt L(s_i \cdot \lambda).
\]
\end{re}

\section{An alternating formula for the weights of simple
modules}\label{Sweylkac}


The main goal of this section is to prove the following:

\begin{theo}\label{Tsimple}
Let $\lambda \in \h^*$ be such that the stabilizer of $\lambda$ in
$W_{I_\lambda}$ is finite. Then:
\begin{equation}\label{ewks}
\wt L(\lambda) = \sum_{w \in W_{I_\lambda} } w
\frac{e^{\lambda}}{\prod_{\alpha \in \pi} (1 - e^{-\alpha})}.
\end{equation}
\end{theo}

We remind that on the left hand side of \eqref{ewks} we mean the
`multiplicity-free' character $\sum_{\mu \in \h^*: L(\lambda)_\mu \neq 0}
e^\mu$. On the right hand side of \eqref{ewks}, in each summand we take
the `highest weight' expansion of $w(1 - e^{-\alpha_i})^{-1}$, i.e.:
\begin{equation} \label{hwex}
w \frac{1}{1 - e^{-\alpha_i}} := \begin{cases}
1 + e^{-w \alpha_i} + e^{-2 w \alpha_i} + \cdots, & w \alpha_i > 0, \\
- e^{ w\alpha_i} - e^{2w \alpha_i} - e^{3w\alpha_i} - \cdots, & w
\alpha_i < 0.
\end{cases}
\end{equation}

By Theorem \ref{weights}, we will deduce Theorem \ref{Tsimple} from the
following:

\begin{pro}\label{wkpv}
Fix $\lambda \in \h^*$, $J \subset I_\lambda$ such that the
stabilizer of $\lambda$ in $W_J$ is finite. Then:
\[
\wt M(\lambda, J) = \sum_{w \in W_J} w \frac{e^{\lambda}}{\prod_{\alpha
\in \pi} (1 - e^{-\alpha})}.
\]
\end{pro}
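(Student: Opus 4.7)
The plan is to reduce Proposition \ref{wkpv} to its $\fl_J$-integrable analogue, and then invoke the known multiplicity-free Weyl--Kac formula in that setting. The Integrable Slice Decomposition (Proposition \ref{slice}) is the engine of the reduction.

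As a preliminary, I would verify that for every $\mu \in \Z^{\geqslant 0}(\pi \setminus \pi_J)$, the weight $\lambda - \mu$ lies in $P^+_J$ and has finite $W_J$-stabilizer. This is because $(\halpha_j, \alpha_i) \leqslant 0$ for $j \in J, i \in I \setminus J$, so $(\halpha_j, \mu) \leqslant 0$, giving $(\halpha_j, \lambda - \mu) \geqslant (\halpha_j, \lambda) \geqslant 0$ for all $j \in J$; equality at $j$ forces $(\halpha_j, \lambda) = 0$, so the isotropy $\St_{W_J}(\lambda - \mu)$ is contained in $\St_{W_J}(\lambda)$.

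Next I would apply Proposition \ref{slice} to write
\[ \och M(\lambda, J) \;=\; \sum_{\mu \in \Z^{\geqslant 0}(\pi \setminus \pi_J)} \och L_{\fl_J}(\lambda - \mu), \]
and then invoke the following integrable input, which is the $\fl_J$-analogue of Theorem \ref{Tsimple} and is due (in various levels of generality) to Kass, Walton, Postnikov, and Sch\"utzer: for each $\nu \in P^+_J$ with finite $W_J$-isotropy,
\[ \och L_{\fl_J}(\nu) \;=\; \sum_{w \in W_J} w\, \frac{e^\nu}{\prod_{j \in J}(1 - e^{-\alpha_j})}. \]
In finite type this also follows from Brion's exponential sum formula applied to the Weyl polytope $\conv W_J \nu$.

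Substituting this formula into the slice decomposition and interchanging the two sums, I would then conclude by a sign-convention bookkeeping: for $w \in W_J$ and $i \in I \setminus J$ one has $w\alpha_i \in \Delta^+ \setminus \Delta^+_J$ positive, so the positive branch of the expansion rule gives $w(1-e^{-\alpha_i})^{-1} = \sum_{k \geqslant 0} e^{-k w \alpha_i}$; consequently
\[ \sum_{\mu \in \Z^{\geqslant 0}(\pi \setminus \pi_J)} w\, \frac{e^{\lambda - \mu}}{\prod_{j \in J}(1-e^{-\alpha_j})} \;=\; w\, \frac{e^\lambda}{\prod_{j \in J}(1-e^{-\alpha_j})} \cdot \prod_{i \in I \setminus J} w\, \frac{1}{1-e^{-\alpha_i}} \;=\; w\, \frac{e^\lambda}{\prod_{i \in I}(1-e^{-\alpha_i})}, \]
and summation over $w \in W_J$ yields the proposition.

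I expect the main obstacle to be the integrable input in the second step. In the Kac--Moody setting $W_J$ may be infinite, but the finite $W_J$-isotropy hypothesis is exactly what ensures that each $w \in W_J$ contributes a cone at a distinct $0$-face $w\nu$ of $\conv W_J \nu$, and that at every weight only finitely many $w$ contribute, so the formal identity makes sense and agrees with the indicator of $\wt L_{\fl_J}(\nu)$. With this input granted, the preliminary stabilizer check and the final rearrangement are formal consequences of Proposition \ref{slice} and the sign convention, with no further combinatorial input needed.
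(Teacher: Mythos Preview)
Your proposal is correct and follows essentially the same route as the paper: both arguments combine the Integrable Slice Decomposition (Proposition~\ref{slice}) with the known integrable case due to Kass, and both rely on the observation that $w\alpha_i > 0$ for $w \in W_J$, $i \in I \setminus J$ to split the product over $\pi$ into its $\pi_J$ and $\pi_{I \setminus J}$ factors. The paper runs the computation from the right-hand side toward the slice decomposition, whereas you run it from the slice decomposition toward the right-hand side, and you make explicit the check that each $\lambda - \mu$ retains finite $W_J$-isotropy; otherwise the proofs coincide.
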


Before proving Proposition \ref{wkpv}, we note the similarity between the
result and the following formulation of the Weyl--Kac character formula.
This is due to Atiyah and Bott \cite{atbo} for finite dimensional simple
modules in finite type, but appears to be new in this generality:

\begin{pro} \label{abott}
Fix $\lambda \in \h^*$, $J \subset I_\lambda$. Then:
\begin{equation}\label{abf}
\ch M(\lambda, J)  = \sum_{w \in W_J} w \frac{e^\lambda}{\prod_{\alpha >
0} (1 - e^{-\alpha})^{\dim \g_\alpha}}.
\end{equation}
\end{pro}

The right hand side of Equation \eqref{abf} is expanded in the manner
explained above. We now prove Proposition \ref{wkpv}, and then turn to
Proposition \ref{abott}. 

\begin{proof}[Proof of Proposition \ref{wkpv}]
By expanding $w  \prod_{\alpha \in \pi_{I \setminus J}}(1 -
e^{-\alpha})^{-1}$, note the right hand side equals: $$\sum_{w \in W_J}
\sum_{\mu \in \Z^{\geqslant 0} \pi_{I \setminus J}} w \frac{e^{\lambda -
\mu}}{\prod_{\alpha \in \pi_J} (1 - e^{-\alpha}) }.$$Therefore we are
done by the Integrable Slice Decomposition \ref{slice} and the existing
result for integrable highest weight modules with finite stabilizer
\cite{Kass}. 
\end{proof}

Specialized to the trivial module in finite type, we obtain the following
curious consequence, which roughly looks like a Weyl denominator formula
without a choice of positive roots:

\begin{cor}
Let $\g$ be of finite type with root system $\Delta$, and let $\Pi$
denote the set of all  $\pi$, where $\pi$ is a simple system of roots for
$\Delta$. Then we have:
\begin{equation}
\prod_{\alpha \in \Delta} (1 - e^{\alpha}) = \sum_{\pi \in \Pi}
\prod_{\beta \notin \pi} (1 - e^\beta).
\end{equation}
\end{cor}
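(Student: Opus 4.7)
The plan is to specialize Theorem \ref{Tsimple} to the trivial module $L(0) = \mathbb{C}$ and then clear denominators. Since $\g$ is of finite type, $W_{I_{L(0)}} = W$ is finite, and the stabilizer of $\lambda = 0$ in $W$ equals $W$ itself, hence is finite; the hypothesis of Theorem \ref{Tsimple} is therefore satisfied. As $\wt L(0) = 1$, the theorem specializes to
\[
1 = \sum_{w \in W} w \frac{1}{\prod_{\alpha \in \pi}(1 - e^{-\alpha})}.
\]

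Since $W$ acts on the group ring via $e^\mu \mapsto e^{w\mu}$, each summand is a specific formal expansion of the rational function $\prod_{\beta \in w\pi}(1-e^{-\beta})^{-1}$; moreover, in finite type $W$ acts simply transitively on the set $\Pi$ of simple systems via $w \mapsto w\pi$. Multiplying both sides of the identity above by the Laurent polynomial $\prod_{\gamma \in \Delta}(1-e^{-\gamma})$, which is divisible by each denominator $\prod_{\beta \in w\pi}(1-e^{-\beta})$, collapses each summand on the right to a genuine Laurent polynomial, yielding
\[
\prod_{\gamma \in \Delta}(1-e^{-\gamma}) = \sum_{\pi' \in \Pi}\prod_{\gamma \in \Delta \setminus \pi'}(1-e^{-\gamma}).
\]

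To conclude, apply the substitution $\gamma \mapsto -\gamma$: using $\Delta = -\Delta$ we have $\prod_{\gamma \in \Delta}(1-e^{-\gamma}) = \prod_{\alpha \in \Delta}(1-e^{\alpha})$, and similarly each $\prod_{\gamma \in \Delta \setminus \pi'}(1-e^{-\gamma}) = \prod_{\beta \in \Delta \setminus (-\pi')}(1-e^{\beta})$. Since $\pi' \mapsto -\pi'$ is also a bijection on $\Pi$, re-indexing the sum yields the stated identity. The only delicate point is confirming that the formal-series identity of Theorem \ref{Tsimple} legitimately upgrades to a polynomial identity after multiplication by $\prod_{\gamma \in \Delta}(1-e^{-\gamma})$; this is automatic, since each summand on the right-hand side becomes a Laurent polynomial with no remaining denominator.
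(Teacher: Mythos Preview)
Your proof is correct and follows exactly the approach the paper indicates: specialize Theorem \ref{Tsimple} (equivalently Proposition \ref{wkpv}) to the trivial module $L(0)$ in finite type, then clear denominators and use that $W$ acts simply transitively on simple systems. The only point one might elaborate is the verification that each formal expansion $w\prod_{\alpha\in\pi}(1-e^{-\alpha})^{-1}$, when multiplied by $\prod_{\beta\in w\pi}(1-e^{-\beta})$, indeed yields $1$ in the completed group ring (a one-line check in each of the two cases $w\alpha_i\gtrless 0$), but this is routine and not a gap.
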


\begin{re}\label{otherone}
When $\g$ is of finite type, Theorem \ref{Tsimple} follows from
Proposition \ref{p3} and general formulae for exponential sums over
polyhedra due to Brion \cite{Brion} for rational polytopes and Lawrence
\cite{Lawrence} and Khovanskii--Pukhlikov \cite{KP} in general,
cf.\ \cite[Chapter 13]{Barvinok-points}. For regular weights, one uses
that the tangent cones are unimodular and hence the associated polyhedra
are Delzant. For general highest weights one may apply a deformation
argument due to Postnikov \cite{Postnikov}. We thank Michel Brion for
sharing this observation with us.

Moreover, it is likely that a similar approach works in infinite type,
though a cutoff argument needs to be made owing to the fact that $\conv
V$ is in general locally, but not globally, polyhedral, cf.\ our
companion work \cite{DK2}. 
\end{re}

\subsection{Some related results}

We conclude this section with several related observations which are not
needed in the remainder of the paper, beginning with Proposition
\ref{abott}.

\subsubsection{Parabolic Atiyah--Bott formula and the
Bernstein--Gelfand--Gelfand--Lepowsky resolution}

Proposition \ref{abott} may be deduced from the usual presentation of the
parabolic Weyl--Kac character formula:

\begin{pro}\label{squeak}
Fix $\lambda \in \h^*, J \subset I_\lambda$. Then:
\begin{equation}
\ch M(\lambda, J) = \sum_{w \in W_J} \frac{ e^{w(\lambda + \rho) -
\rho}}{\prod_{\alpha> 0} (1 - e^{-\alpha})^{\dim
\g_\alpha}}.
\end{equation}
\end{pro}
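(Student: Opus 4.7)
The plan is to reduce to the classical Weyl--Kac character formula for the integrable $\fl_J$-module $L^{\max}_{\fl_J}(\lambda)$, combined with parabolic induction. The hypothesis $J \subset I_{L(\lambda)}$ ensures $\lambda \in P^+_J$, so $L^{\max}_{\fl_J}(\lambda)$ is integrable and the Weyl--Kac formula is available to it.

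First I would invoke Proposition \ref{para}(2) with $\fs = \g$ to realize
\[
M(\lambda, J) \simeq \Ind^\g_{\fp_J} \Res_{\fp_J}^{\fl_J} L^{\max}_{\fl_J}(\lambda),
\]
so that the PBW decomposition $U(\g) \simeq U(\fu^-_J) \otimes U(\fp_J)$ factors the character as
\[
\ch M(\lambda, J) = \ch L^{\max}_{\fl_J}(\lambda) \cdot \prod_{\alpha \in \Delta^+ \setminus \Delta^+_J}(1-e^{-\alpha})^{-\dim \g_\alpha}.
\]

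Next I would apply the Weyl--Kac formula for the integrable highest weight $\fl_J$-module $L^{\max}_{\fl_J}(\lambda)$:
\[
\ch L^{\max}_{\fl_J}(\lambda) = \frac{\sum_{w \in W_J}(-1)^{\ell(w)} e^{w(\lambda+\rho_J)-\rho_J}}{\prod_{\alpha \in \Delta^+_J}(1-e^{-\alpha})^{\dim \g_\alpha}},
\]
for any $\rho_J \in \h^*$ with $(\halpha_j, \rho_J) = 1$ for $j \in J$. To pass from $\rho_J$ to $\rho$, note that $\rho - \rho_J$ pairs trivially with each $\halpha_j$, $j \in J$, and is therefore fixed by $W_J$; hence $w(\lambda + \rho) - \rho = w(\lambda+\rho_J) - \rho_J$ for all $w \in W_J$. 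Multiplying the two displayed expressions then merges the denominators into $\prod_{\alpha > 0}(1-e^{-\alpha})^{\dim \g_\alpha}$ and produces the stated formula, with the signs $(-1)^{\ell(w)}$ absorbed into the expansion convention introduced for Proposition \ref{wkpv}; equivalently, these signs are produced by the Weyl denominator identity $w \cdot \prod_{\alpha > 0}(1-e^{-\alpha}) = (-1)^{\ell(w)} e^{\rho - w\rho}\prod_{\alpha > 0}(1-e^{-\alpha})$, which converts $w \cdot (e^\lambda / D)$ into $(-1)^{\ell(w)} e^{w(\lambda+\rho)-\rho} / D$.

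The only real obstacle is the availability of the Weyl--Kac character formula for the integrable $\fl_J$-module $L^{\max}_{\fl_J}(\lambda)$; this is classical \cite{Kac} for symmetrizable Levi subalgebras and extends to the general Kac--Moody setting by Kumar's geometric approach, so the hypothesis $J \subset I_{L(\lambda)}$ is exactly what is needed.
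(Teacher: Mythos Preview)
Your approach is exactly the paper's: Levi induction (Proposition \ref{para}(2)) together with the Weyl--Kac formula for the integrable $\fl_J$-module $L^{\max}_{\fl_J}(\lambda)$, the latter requiring Kumar's result in the non-symmetrizable case. The factorization of $\ch M(\lambda,J)$ and the passage from $\rho_J$ to $\rho$ are carried out correctly.

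The only misstep is at the end, where you try to make the signs $(-1)^{\ell(w)}$ disappear. The formula as printed in Proposition \ref{squeak} is simply missing these signs (a typo): the denominator there carries no $w$-action, so the expansion convention of Proposition \ref{wkpv} is irrelevant, and there is nothing to ``absorb'' them into. Your derivation correctly yields
\[
\ch M(\lambda,J) = \sum_{w \in W_J} (-1)^{\ell(w)} \frac{e^{w(\lambda+\rho)-\rho}}{\prod_{\alpha>0}(1-e^{-\alpha})^{\dim\g_\alpha}},
\]
which is the intended statement (compare Equation \eqref{Echar2} with $J'=\emptyset$). The denominator identity you quote is what converts this into the sign-free Atiyah--Bott form \eqref{abf}, not the other way around.
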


To our knowledge, even Proposition \ref{squeak} does not appear in the
literature in this generality. However, via Levi induction one may reduce
to the case of $\lambda$ dominant integral, $J = I$, where it is a famed
result of Kumar \cite{kumar-demazure,kbg}. Along these lines, we also
record the BGGL resolution for arbitrary parabolic Verma modules, which
may be of independent interest:

\begin{pro}\label{ggg}
Fix $M(\lambda, J)$ and $J' \subset J$. Then $M(\lambda, J)$ admits a
BGGL resolution, i.e., there is an exact sequence:
\begin{align}
\cdots\ \rightarrow \bigoplus_{w \in W^{J'}_J: \ell(w) = 2} M(w \cdot
\lambda, J') \rightarrow \bigoplus_{w \in W^{J'}_J: \ell(w) = 1} M(w
\cdot \lambda, J') &\ \rightarrow\\
\rightarrow M(\lambda, J') \rightarrow &\ M(\lambda, J) \rightarrow
0.\notag
\end{align}

\noindent Here $W^{J'}_J$ runs over the minimal length coset
representatives of $W_{J'}\backslash W_J$. In particular, we have the
Weyl--Kac character formula:
\begin{equation}\label{Echar2}
\ch M(\lambda, J) = \sum_{w \in W^{J'}_J} (-1)^{\ell(w) } \ch M(w \cdot
\lambda, J').
\end{equation}
\end{pro}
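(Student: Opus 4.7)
The plan is to reduce to Kumar's theorem \cite{kumar-demazure,kbg} via parabolic induction and transitivity of induction. First, by Proposition \ref{para}(2),
\[
M(\lambda, J) \;\simeq\; \Ind^{\g}_{\fp_J} L^{\max}_{\fl_J}(\lambda),
\]
where $L^{\max}_{\fl_J}(\lambda)$ is inflated from $\fl_J$ to $\fp_J$ via the quotient $\fp_J \twoheadrightarrow \fl_J$. Since $\lambda \in P^+_J$, $L^{\max}_{\fl_J}(\lambda)$ is a fully integrable simple highest weight module over the Kac--Moody algebra $\fl_J$. Kumar's theorem applied to the pair $\fl_{J'} \subset \fl_J$ (corresponding to $J' \subset J$) therefore provides a parabolic BGGL resolution
\[
\cdots \to \bigoplus_{w \in W^{J'}_J,\,\ell(w)=k} M_{\fl_J}(w \cdot \lambda, J') \to \cdots \to M_{\fl_J}(\lambda, J') \to L^{\max}_{\fl_J}(\lambda) \to 0
\]
in category $\mathcal{O}$ for $\fl_J$.

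Next I would inflate this resolution to $\fp_J$ (letting $\fu_J^+$ act trivially on each term) and apply $\Ind^{\g}_{\fp_J}$. By PBW, $U(\g)$ is free as a right $U(\fp_J)$-module, so this functor is exact and the resulting sequence of $\g$-modules is a resolution of $M(\lambda, J)$. To identify it with the claimed BGGL resolution, it remains to show that $\Ind^{\g}_{\fp_J} M_{\fl_J}(\mu, J') \simeq M(\mu, J')$ for $\mu = w \cdot \lambda$. Since $J' \subset J$, we have $\fp_{J'} \subset \fp_J$ with the PBW decomposition $\fp_{J'} = (\fp_{J'} \cap \fl_J) \oplus \fu_J^+$. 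A direct PBW argument then shows that the inflation of $M_{\fl_J}(\mu, J') = \Ind^{\fl_J}_{\fp_{J'} \cap \fl_J} L^{\max}_{\fl_{J'}}(\mu)$ from $\fl_J$ to $\fp_J$ is naturally isomorphic to $\Ind^{\fp_J}_{\fp_{J'}}$ applied to the inflation of $L^{\max}_{\fl_{J'}}(\mu)$ from $\fl_{J'}$ to $\fp_{J'}$; transitivity of induction then gives
\[
\Ind^{\g}_{\fp_J} M_{\fl_J}(\mu, J') \;\simeq\; \Ind^{\g}_{\fp_{J'}} L^{\max}_{\fl_{J'}}(\mu) \;=\; M(\mu, J').
\]

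For this identification to be sensible one needs $w \cdot \lambda \in P^+_{J'}$ for every $w \in W^{J'}_J$. This follows since $w$ is a minimal length coset representative of $W_{J'}\backslash W_J$, so $w^{-1}(\alpha_{j'}) \in \Delta^+_J$ for each $j' \in J'$; combined with $\lambda \in P^+_J$ this forces $(\halpha_{j'}, w \cdot \lambda) \geqslant 0$. The character formula \eqref{Echar2} then follows immediately by taking Euler characteristics of the resolution. The main technical point in the plan is the transitivity identification $\Ind^{\g}_{\fp_J} M_{\fl_J}(\mu, J') \simeq M(\mu, J')$; once this is verified, the rest of the argument is a formal consequence of Kumar's theorem and the exactness of parabolic induction.
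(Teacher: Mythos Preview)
Your approach is correct and is exactly what the paper does: its one-line proof reads ``This can be deduced by Levi induction from the result of Heckenberger and Kolb \cite{heko}'', and ``Levi induction'' is precisely your inflate-then-$\Ind^{\g}_{\fp_J}$ argument together with the transitivity identification $\Ind^{\g}_{\fp_J} M_{\fl_J}(\mu,J') \simeq M(\mu,J')$. The one correction is the attribution of the input: the \emph{parabolic} BGGL resolution of $L^{\max}_{\fl_J}(\lambda)$ by the modules $M_{\fl_J}(w\cdot\lambda,J')$ for arbitrary $J'\subset J$ in the Kac--Moody setting is due to Heckenberger--Kolb \cite{heko}; Kumar's theorem \cite{kbg} is the case $J'=\emptyset$ (resolution by ordinary Verma modules), so as written your citation only covers that special case.
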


This can be deduced by Levi induction from the result of Heckenberger and
Kolb \cite{heko}, which builds on \cite{lepo2,kbg,murr}.

\subsubsection{Generalization of a result of Kass}

In the paper of Kass \cite{Kass} that proves Theorem \ref{Tsimple} for
integrable modules, the main result is a recursive formula for the
character of an integrable module. After establishing notation, we will
state the analogous result for parabolic Verma modules. 

For $J \subset I$, if $\nu \in \h^*$ lies in the $J$ dot Tits cone, write
$\overline{\nu} = w_\nu \cdot \nu$ for the unique $J$ dot dominant weight
in its $W_J$ orbit. For $\lambda \in P^+_J$, we write $\chi_{\lambda} :=
\ch M(\lambda, J)$, and for $\nu$ lying in the dot $J$ Tits cone, we set:
\[
\chi_{\nu} := \begin{cases}  (-1)^{\ell(w_\nu)}
\chi_{\overline{\nu}} & \text{if $\overline{\nu}$ is in the $J$ dominant
chamber,} \\ 0 & \text{otherwise.} \end{cases}
\]

\noindent This makes sense because if $\overline{\nu}$ lies in the $J$
dominant chamber, it is regular dominant under the $J$ dot action, so
there is a unique $w_\nu \in W_J$ such that $w_\nu \cdot \nu =
\overline{\nu}$.

Define the elements $\langle \Phi \rangle \in \Z^{\geqslant 0} \pi$ and
the integers $c_{\langle \Phi \rangle}$ via the expansion,
\[
\prod_{\alpha \in \Delta^+ \setminus \pi} (1 - e^{-\alpha})^{\dim
\g_\alpha} =: \sum_{\langle \Phi \rangle} c_{\langle \Phi \rangle} e^{ -
\langle \Phi \rangle}.
\]

\noindent Informally, this expansion counts the number of ways to write
$\langle \Phi \rangle$ as sums of non-simple positive roots, with
consideration of the multiplicity of the root and the parity of the sum.

For a parabolic Verma module $M(\lambda, J)$, by abuse of notation
identify its weights with the corresponding multiplicity-free character:
\[
\wt M(\lambda, J) = \sum_{\mu: M(\lambda, J)_{\mu} \neq 0} e^{\mu}.
\]

With this notation, we are ready to state the following:

\begin{pro}\label{sc}
Let $M(\lambda, J)$ be a parabolic Verma module such that the stabilizer
of $\lambda$ in $W_J$ is finite. Then:

\begin{equation}\label{stwo}
\wt M(\lambda, J) = \sum_{\langle \Phi \rangle} c_{\langle \Phi \rangle}
\chi_{ \lambda - \langle \Phi \rangle },
\end{equation}
Furthermore, for all $\langle \Phi \rangle$, we have
(i) $\overline{\lambda - \langle \Phi \rangle} \leqslant \lambda$,
(ii) with equality if and only if $\Phi$ is empty.
\end{pro}

In Equation \eqref{stwo}, we expand denominators as `highest weight'
power series as explained in Equation \eqref{hwex}. The arguments of
\cite{Kass} or \cite{Sc} apply with suitable modification to prove
Proposition \ref{sc}.

\begin{re}
As the remainder of the paper concerns only symmetrizable $\g$, we now
explain the validity of earlier results for $\overline{\g}$ (cf.\
Section~\ref{rat}), should $\g$ and $\overline{\g}$ differ. The proofs in
Sections \ref{S4}--\ref{Sweylkac} prior to Remark
\ref{otherone} apply verbatim for $\overline{\g}$. Alternatively,
recalling that the roots of $\g$ and $\overline{\g}$ coincide \cite[\S
5.12]{Kac}, it follows from Equation~\eqref{Everma} and the remarks
following Proposition~\ref{nonde} that the weights of parabolic Verma
modules for $\g, \overline{\g}$ coincide, as do the weights of simple
highest weight modules. Hence many of the previous results can be deduced
directly for $\overline{\g}$ from the case of $\g$. These arguments for
$\overline{\g}$ apply for any intermediate Lie algebra between $\g$ and
$\overline{\g}$.
\end{re}

\section{A
geometric interpretation of integrability}\label{Sgeometry}

Throughout this section, $\g$ is of finite type. We will answer the
question of Brion \cite{Brion2} discussed in Section \ref{Sbrion2}. We
first remind notation. For $\lambda$ a dominant integral weight, consider
$\mathscr{L}_\lambda$, the line bundle on $G/B$ with
$H^0(\mathscr{L}_\lambda) \simeq L(\lambda)$ as $\g$-modules. For $w \in
W$, write $C_w := BwB/B$ for the Schubert cell, and write $X_w$ for its
closure, the Schubert variety. To affirmatively answer Brion's question
for any regular integral infinitesimal character, it suffices by
translation to consider the regular block $\mathscr{O}_0$.

Let us recall the geometric side of the localization theorem. For an
Artin stack $X$, we write $D(X)$ for its bounded derived category of
D-modules with coherent cohomology. We write $N$ for the unipotent
radical of $B$, and consider the associated category of D-modules on the
quotient stack $B \backslash G / N$, i.e. 
\begin{equation} \label{e:dmod}
     D(B \backslash G / N). 
\end{equation}
As every object of \eqref{e:dmod} has regular holonomic cohomology, the
Riemann--Hilbert correspondence gives an equivalence with the bounded
constructible derived category of sheaves 
\begin{equation} \label{e:rh}
    D(B \backslash G / N) \simeq \operatorname{Sh}(B \backslash G / N),
\end{equation}
which exchanges the standard $t$-structure on D-modules with the perverse
$t$-structure on constructible sheaves. 

Let us next recall the form of Beilinson--Bernstein localization we will
use. Consider the action of the abstract Cartan $T \simeq B/N$ on $B
\backslash G / N$ by right translation. Note that all the objects of
\eqref{e:dmod} are $T$-monodromic, i.e. lie in the full subcategory
generated under colimits by $!$-pulling back along the projection 
\[
   B \backslash G / N \rightarrow (B \backslash G / N) / T \simeq B
   \backslash G / B.
\]
In particular, they carry a canonical datum of {\em weak}
$T$-equivariance. Accordingly, one may form the $T$-invariant sections of
the underlying quasicoherent sheaf of the (left) D-module on $G/N$, i.e.
one has a functor
\[
\Gamma(G/B, -): D(B \backslash G / N) \rightarrow  \operatorname{Vect},
\]
where $\operatorname{Vect}$ denotes the bounded derived category of
vector spaces. The canonical embedding of $\g$ into global vector fields
on $G/N$ factors $\Gamma(G/B, -)$ as a composition 
\[
     D(B \backslash G / N) \rightarrow \g\operatorname{-mod}
     \xrightarrow{\operatorname{Oblv}} \operatorname{Vect},
     \]
where $\g\operatorname{-mod}$ denotes the bounded derived category of
$\g$-modules and $\operatorname{Oblv}$ denotes the forgetful functor.
Then the theorem of Beilinson--Bernstein \cite[Corollary 3.3.3]{bbj}
asserts that the obtained functor to $\g$-modules induces a $t$-exact
equivalence with the bounded derived category of $\mathscr{O}_0$, i.e. 
\begin{equation} \label{e:bbloc}
     \Gamma(G/B, -): D(B \backslash G / N) \simeq D^b(\mathscr{O}_0).
\end{equation}

Before turning the problem at hand, let us briefly comment on the
interaction of the above equivalences with duality. For \eqref{e:rh}, we
recall that the Riemann--Hilbert correspondence exchanges the standard
duality on D-modules with Verdier duality for constructible sheaves, see
for example \cite[Corollary 4.6.5]{htt}. For \eqref{e:bbloc}, it is known
that localization exchanges the standard duality on D-modules with the
contragredient duality on $D^b(\mathscr{O})$, see for example Section 1
of \cite{arkhgaits}. In particular, we use the normalization of
\cite{arkhgaits} for the contragredient duality. We refer to all three
dualities by $\mathbb{D}$ below. 

Having established our conventions, we turn to Brion's question. Let $V$
be a highest weight module with highest weight $w \cdot (-2\rho), w \in
W$. Write $\mathscr{V}$ for the corresponding regular holonomic
$D$-module on $G/B$, and $\mathscr{V}_{rh}$ for the corresponding
perverse sheaf under the Riemann--Hilbert correspondence. Then
$\mathscr{V}, \mathscr{V}_{rh}$ are supported on $X_w$. Note that for $i
\in I_V$, $X_{s_iw}$ is a Schubert divisor of $X_{w}$. 

\begin{theo}\label{brg}
Let $V$ and $\mathscr{V}$ be as above. For a subset $J \subset I_V$,
consider the corresponding union of Schubert divisors
\[
Z := \bigcup_{i \in I_V \setminus J} X_{s_iw},
\]
and denote its complement by $\tu := (G/B) \setminus Z.$ Then the
$\g$-module $\mathbb{D} H^0(\tu, \mathbb{D} \mathscr{V})$ is of highest
weight $w \cdot (-2\rho)$ and has integrability $J$.
\end{theo}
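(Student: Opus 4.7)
The plan is to translate, via Beilinson--Bernstein localization and Riemann--Hilbert, into a statement about a perverse sheaf on $G/B$, and then invoke Proposition \ref{gint} to read off the integrability from the codimension-one stalks.

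First, I would identify $\mathbb{D} H^0(\tu, \mathbb{D}\cV)$ with the $\g$-module $W := \Gamma(G/B, j_{\tu,!} j_\tu^* \cV)$, where $j_\tu \colon \tu \hookrightarrow G/B$ is the open immersion. This uses $H^0(\tu, -) = \Gamma(G/B, j_{\tu,*} j_\tu^*(-))$, the exactness of Beilinson--Bernstein's $\Gamma$ on the regular integral block and its compatibility with $\mathbb{D}$, together with the standard identities $\mathbb{D} j_{\tu,*} = j_{\tu,!} \mathbb{D}$ and $j_\tu^* \mathbb{D} = \mathbb{D} j_\tu^*$ for the open immersion $j_\tu$.

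Next, I would verify that $W$ is a highest weight module of weight $\lambda := w \cdot (-2\rho)$. Choose a surjection $M(\lambda) \twoheadrightarrow V$ and consider its $D$-module counterpart $\Delta_w \twoheadrightarrow \cV$, where $\Delta_w$ is the standard object generically supported on $C_w$ with $\Gamma(G/B, \Delta_w) = M(\lambda)$. Since $C_w \subset \tu$, the functor $j_{\tu,!} j_\tu^*$ returns $\Delta_w$ unchanged; using its right exactness on the perverse heart and then applying $\Gamma$ produces a surjection $M(\lambda) \twoheadrightarrow W$. Hence $W$ is of highest weight $\lambda$.

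Finally, I would apply Proposition \ref{gint} to $W$: its integrability is $I_W = \{i \in I : \text{the stalks of } j_{\tu,!} j_\tu^* \cV_{rh} \text{ along } C_{s_iw} \text{ are nonzero}\}$. These stalks are controlled by the adjunction triangle $j_{\tu,!} j_\tu^* \cV_{rh} \to \cV_{rh} \to (i_Z)_* i_Z^* \cV_{rh}$, where $i_Z \colon Z \hookrightarrow G/B$ is the closed complement. For $i \in J$, the cell $C_{s_iw}$ lies in $\tu$, so the stalks of $j_{\tu,!} j_\tu^* \cV_{rh}$ and $\cV_{rh}$ agree along $C_{s_iw}$, and they are nonzero by Proposition \ref{gint} applied to $V$ since $J \subset I_V$. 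For $i \in I_V \setminus J$, the cell lies in $Z$, so the $!$-extension vanishes along it. For $i \in I \setminus I_V$, either $C_{s_iw}$ is not a codimension-one stratum of $X_w$ (so the stalk vanishes for support reasons), or $i \in I_{L(\lambda)} \setminus I_V$ and the stalk of $\cV_{rh}$ already vanishes by Proposition \ref{gint}. Hence $I_W = J$.

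The main obstacle is ensuring that $j_{\tu,!} j_\tu^* \cV$ is concentrated in a single perverse degree, so that $W$ is a genuine $\g$-module rather than a complex and Proposition \ref{gint} applies directly. This ought to reduce to a formal check using that $\cV$ is $B$-equivariant and that $Z$ is a union of closed Schubert strata, but warrants careful verification.
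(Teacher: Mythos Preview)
Your outline matches the paper's proof: both identify $\mathbb{D} H^0(\tu, \mathbb{D}\cV)$ with the global sections of $\pH^0 {j_{\tu}}_! j_{\tu}^* \cV$, verify it is highest weight via the surjection from the standard object on $C_w$, and read off integrability from Proposition \ref{poles} by computing stalks along the cells $C_{s_iw}$. Regarding your flagged obstacle, the paper does not prove the complex is perverse but simply works with $\pH^0$ throughout; since ${j_{\tu}}_!$ is right $t$-exact the highest-weight surjection survives, and since restriction to the open $\cU$ is $t$-exact one has $j_{\cU}^* \pH^0(-) \simeq \pH^0 j_{\cU}^*(-)$, after which your stalk analysis on $\cU$ goes through unchanged.
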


To prove Theorem \ref{brg}, we will use the following geometric
characterization of integrability, which was promised in Theorem
\ref{maintheo}.

\begin{pro}\label{poles}
For $J \subset I_\lambda$, consider the smooth open subvariety of
$X_w$ given on complex points by:
\begin{equation}
\cU_J := C_w \sqcup \bigsqcup_{i \in J} C_{s_iw}.
\end{equation}

\noindent Let $V, \cV_{rh}$ be as above, and set $\cU :=
\cU_{I_\lambda}$. Upon restricting to $\cU$ we have:
\begin{equation}
\cV_{rh}|_{\cU} \simeq j_! \C_{\cU_{I_V}} [\ell(w)].
\end{equation}

\noindent In particular, $I_V = \{i \in I: \text{the $*$-stalks of
$\cV_{rh}$ along $C_{s_iw}$ are nonzero}\}.$
\end{pro}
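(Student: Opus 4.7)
The plan is to analyze $\cV_{rh}$ locally on $\cU$ using the fact that, for each $i \in I_{L(\lambda)}$, the smooth open subvariety $\cU_{\{i\}} = C_w \sqcup C_{s_iw}$ is a Zariski-locally trivial $\mathbb{P}^1$-bundle over $C_{s_iw}$. Indeed, via the Demazure construction $P_i \times^B C_{s_iw} \to X_w$---which is an isomorphism onto $\cU_{\{i\}}$ since $X_w$ is smooth there---the fibers are the $\mathfrak{sl}_2^{(i)}$-orbits, each meeting $C_{s_iw}$ in a single point and $C_w$ in the complementary $\mathbb{A}^1$. The $\cU_{\{i\}}$ cover $\cU$ with pairwise intersections equal to $C_w$, so it suffices to compute each $\cV_{rh}|_{\cU_{\{i\}}}$ and then glue.

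On the open cell $C_w \simeq \mathbb{A}^{\ell(w)}$, the $D$-module $\cV$ is a rank-one flat connection arising from the highest weight line $V_\lambda$, giving $\cV_{rh}|_{C_w} \simeq \C_{C_w}[\ell(w)]$. To extend across $C_{s_iw}$, fix $i \in I_{L(\lambda)}$ and restrict $\cV$ to a $\mathbb{P}^1$-fiber. By Beilinson--Bernstein for $\mathfrak{sl}_2^{(i)}$ on $\mathbb{P}^1 \simeq P_i/B$, the resulting $D$-module corresponds to a highest weight $\mathfrak{sl}_2^{(i)}$-module of weight $k := (\halpha_i, \lambda)$, namely a quotient of the Verma $M^{(i)}(k)$. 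Such a quotient equals either $M^{(i)}(k)$ or the integrable simple $L^{(i)}(k)$, and by Lemma~\ref{wint}(1) it is the integrable simple iff $f_i^{k+1} V_\lambda = 0$, iff $i \in I_V$.

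If $i \in I_V$, the fiberwise $D$-module is therefore $\mathcal{O}$-coherent with perverse sheaf $\C_{\mathbb{P}^1}[1]$, which by smooth base change along the bundle gives $\cV_{rh}|_{\cU_{\{i\}}} \simeq \C_{\cU_{\{i\}}}[\ell(w)]$. If $i \notin I_V$, the fiberwise perverse sheaf is instead $j_!\C_{\mathbb{A}^1}[1]$ for $j : \mathbb{A}^1 = \mathbb{P}^1 \cap C_w \hookrightarrow \mathbb{P}^1$, which has vanishing stalk at the $C_{s_iw}$-point; hence $\cV_{rh}|_{\cU_{\{i\}}}$ is the $!$-extension of $\C_{C_w}[\ell(w)]$ from $C_w$ to $\cU_{\{i\}}$, with vanishing stalks on $C_{s_iw}$. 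Assembling these local descriptions over all $i \in I_{L(\lambda)}$---they agree on the overlaps $C_w$ as $\C_{C_w}[\ell(w)]$---we find that $\cV_{rh}|_\cU$ is supported on $\cU_{I_V}$, restricts there to $\C_{\cU_{I_V}}[\ell(w)]$, and vanishes on $\cU \setminus \cU_{I_V}$. Such a perverse sheaf is forced to be $j_!\C_{\cU_{I_V}}[\ell(w)]$ for $j: \cU_{I_V} \hookrightarrow \cU$, and the characterization of $I_V$ by stalks follows at once.

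The main technical point is the fiberwise identification in the second paragraph: one must confirm that the $\mathfrak{sl}_2^{(i)}$-module produced from the fiberwise $D$-module under $\mathfrak{sl}_2^{(i)}$-Beilinson--Bernstein is indeed the quotient of $M^{(i)}(k)$ encoding $f_i$-nilpotency on $V_\lambda$, and that smoothness of the $\mathbb{P}^1$-bundle propagates the fiberwise dichotomy to all of $\cU_{\{i\}}$.
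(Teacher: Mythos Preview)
Your geometric setup contains an error that undermines the argument. The Demazure map $P_i \times^B C_{s_iw} \to X_w$ is indeed an isomorphism onto $\cU_{\{i\}}$, but $P_i \times^B C_{s_iw}$ is a $C_{s_iw}$-bundle over $P_i/B \simeq \mathbb{P}^1$ (via $[p,c] \mapsto pB$), \emph{not} a $\mathbb{P}^1$-bundle over $C_{s_iw}$: there is no well-defined projection $[p,c] \mapsto c$ because $[pb^{-1},bc] = [p,c]$. More concretely, the left $SL_2^{(i)}$-orbits on $\cU_{\{i\}}$ are not all $\mathbb{P}^1$'s. Already in $SL_3$ with $w = s_1s_2$, $i=1$: the stabilizer in $SL_2^{(1)}$ of $\exp(te_2)\,s_2B$ has Lie algebra $\C e_1$ for $t \neq 0$ (one checks $h_1 \notin \mathrm{Ad}(\exp(te_2))\,s_2\mathfrak{b}s_2^{-1}$), so that orbit is two-dimensional. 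Thus the picture of ``$\mathbb{P}^1$-fibers over $C_{s_iw}$, each an $\mathfrak{sl}_2^{(i)}$-orbit'' is false, and with it the fiberwise $\mathfrak{sl}_2$ Beilinson--Bernstein reduction collapses. Even had the fibration existed, you would still owe an argument identifying the restricted $D$-module with the localization of $U(\mathfrak{sl}_2^{(i)})V_\lambda$; this is not automatic.

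The paper's proof avoids all of this by working globally with Jordan--H\"older content. One knows the perverse sheaf for $M(\lambda,J)$ is $j_!\C_{P_JwB/B}[\ell(w)]$, so the surjection $M(\lambda,I_V) \twoheadrightarrow V$ gives a surjection of perverse sheaves. Restricted to $\cU$, the only simple constituents that survive are $\IC_w$ and $\IC_{s_iw}$ for $i \in I_{L(\lambda)}$; a short multiplicity count (each $[M(\lambda):L(s_i\cdot\lambda)]=1$, and the factor $L(s_i\cdot\lambda)$ persists in both $M(\lambda,I_V)$ and $V$ exactly when $i \notin I_V$) shows the kernel vanishes on $\cU$. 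Then one simply computes $j_!\C_{P_{I_V}wB/B}[\ell(w)]|_\cU = j_!\C_{\cU_{I_V}}[\ell(w)]$. This is both shorter and sidesteps the delicate local geometry you attempted.
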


\begin{proof}
For $J \subset I_\lambda$, let $P_J$ denote the corresponding
parabolic subgroup of $G$, i.e. with Lie algebra $ \fl_J + \n^+$. Then it
is well known that the perverse sheaf corresponding to $M(\lambda, J)$ is
$j_! \C_{P_JwB/B} [\ell(w)]$. 

The map $M(\lambda, I_V) \rightarrow V \rightarrow 0$ yields a surjection
on the corresponding perverse sheaves. By considering Jordan--H\"{o}lder
content, it follows this map is an isomorphism when restricted to
$\cU_{I_\lambda}$, as for $y \leqslant w$ the only intersection
cohomology sheaves $\IC_y := \IC_{X_y}$ which do not vanish upon
restriction are $\IC_w, \IC_{s_iw}, i \in I_\lambda.$ We finish by
observing:
\[
j_{\cU_{I_\lambda} }^* j_! \C_{\cU_{I_V}}[\ell(w)] \simeq j_!
\C_{\cU_{I_\lambda} \cap P_{I_V}wB/B}[\ell(w)] = j_! \C_{\cU_{I_V}}
[\ell(w)].\qedhere
\]
\end{proof}

We deduce the following $D$-module interpretation of integrability:

\begin{cor}\label{Cqbrion}
Let $\cV, \cU$ be as above. The restriction of $\mathbb{D} \cV$ to $\cU$
is $j_* \OO_{\cU_{I_V}}$. If we define $I_{\cV} := \{i \in I: \text{the
fibers of $\mathbb{D} \cV$ are nonzero along $C_{s_i w}$}\}$, then
$I_{\cV} = I_V$. Here we mean fibers in the sense of the underlying
quasi-coherent sheaf of $\mathbb{D} \cV$. 
\end{cor}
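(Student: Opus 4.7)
The plan is to deduce both assertions from Proposition \ref{poles} by applying Verdier duality (equivalently, the holonomic duality on $D$-modules, intertwined by Riemann--Hilbert) to the isomorphism
\[
\cV_{rh}|_{\cU} \simeq j_! \C_{\cU_{I_V}}[\ell(w)],
\]
and then performing a local fiber computation along each Schubert cell of codimension at most one in $X_w$.

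First I would observe that $\cU$ and $\cU_{I_V}$ are smooth of the same dimension $\ell(w)$, so $j_!\C_{\cU_{I_V}}[\ell(w)]$ is perverse and its Verdier dual is the intermediate extension's upper cousin, namely $j_* \C_{\cU_{I_V}}[\ell(w)]$. Since Riemann--Hilbert intertwines Verdier duality with the holonomic duality $\mathbb{D}$ on $D$-modules, it follows that $\mathbb{D}\cV|_\cU$ has de Rham complex $j_*\C_{\cU_{I_V}}[\ell(w)]$. On the other hand, the $D$-module direct image $j_+ \OO_{\cU_{I_V}}$ has de Rham complex $j_* \mathrm{DR}(\OO_{\cU_{I_V}}) = j_* \C_{\cU_{I_V}}[\ell(w)]$, while its underlying quasi-coherent sheaf on $\cU$ is $j_* \OO_{\cU_{I_V}}$. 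Matching these under Riemann--Hilbert yields the first claim.

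For the fiber statement, I would analyze a point $p \in C_{s_i w}$ for each $i \in I$. Since $\cV$, and hence $\mathbb{D}\cV$, is supported on $X_w$, whenever $i \notin I_{L(\lambda)}$ one has $s_i w > w$, so $C_{s_i w}$ is disjoint from $X_w$ and the fibers vanish for trivial reasons. For $i \in I_{L(\lambda)}$ the cell $C_{s_i w}$ lies inside $\cU$, and we apply the first part. If $i \in I_V$, then $C_{s_i w} \subset \cU_{I_V}$, so the sheaf is locally $\OO_\cU$ and the fibers are one-dimensional. If $i \in I_{L(\lambda)} \setminus I_V$, then $C_{s_i w}$ is a smooth divisor in $\cU$ locally cut out by a single equation $t = 0$, and $j_* \OO_{\cU_{I_V}}$ is locally the localization $\OO_\cU[t^{-1}]$; tensoring with the residue field at any point of this divisor annihilates the module because $t$ becomes invertible, so the fiber vanishes. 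Combining the three cases gives $I_\cV = I_V$.

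The main thing requiring care is the bookkeeping of shifts and normalizations in passing between perverse sheaves and $D$-modules, and the verification that $\mathbb{D}$ on Category $\mathcal O$, on $D$-modules, and on perverse sheaves are compatibly intertwined by Beilinson--Bernstein and Riemann--Hilbert. Once the identification $\mathbb{D}\cV|_\cU \simeq j_+ \OO_{\cU_{I_V}}$ is established, the determination of $I_\cV$ reduces to the elementary local computation above at the generic point of each codimension-one Schubert cell.
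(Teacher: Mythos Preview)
Your proposal is correct and is precisely the argument the paper has in mind: the corollary is stated immediately after Proposition~\ref{poles} without proof, and the intended deduction is exactly to dualize the isomorphism $\cV_{rh}|_{\cU} \simeq j_!\C_{\cU_{I_V}}[\ell(w)]$, translate through Riemann--Hilbert to the $D$-module side, and then read off the fibers along each $C_{s_iw}$. Your case analysis for $i \notin I_{L(\lambda)}$, $i \in I_V$, and $i \in I_{L(\lambda)} \setminus I_V$ (using that $j_*\OO_{\cU_{I_V}}$ is locally $\OO_\cU[t^{-1}]$ along the smooth divisor) is the expected local computation.
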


Before proving Theorem \ref{brg}, we first informally explain the idea.
Proposition \ref{poles} says that for $i \in I_{L(w \cdot - 2\rho)}$, the
action of the corresponding $\lie{sl}_2 \rightarrow \g$ on $V$ is not
integrable if and only if $\mathscr{V}_{rh}$ has a `pole' on the Schubert
divisor $X_{s_i w}$. Therefore to modify $V$ so that it loses
integrability along $X_{s_i w}$, we will  restrict $\cV_{rh}$ to the
complement and then extend by zero.

\begin{proof}[Proof of Theorem \ref{brg}]
For ease of notation, write $X := X_w$. Write $X = Z \sqcup U$, where $Z$
is as above, and $G/B = Z \sqcup \tu$. Note  $\tu \cap X_w = U$. Write
$j_U: U \rightarrow X_w, j_{\tu}: \tu \rightarrow G/B$ for the open
embeddings, and $i_Z: Z \rightarrow X_w, i'_{Z}: Z \rightarrow G/B,
i_{X_w}: X_w \rightarrow G/B$ for the closed embeddings. 

We will study $H^0(\tu, \mathscr{V})$ by studying the behavior of
$\mathscr{V}$ on $U$, i.e. before $*$-extending from $X_w$. Formally,
write $\mathscr{V}_{rh} = {i_X}_* {i_X}^* \mathscr{V}_{rh} =: {i_X}_*
\mathscr{P}$, where $\mathscr{P}$ is perverse. The distinguished
triangle:
\begin{equation}\label{ples}
{j_U}_! {j_U}^* \rightarrow \text{id} \rightarrow {i_Z}_* {i_Z}^*
\xrightarrow{+1}
\end{equation}

\noindent gives the following exact sequence in perverse cohomology:
\[
0 \rightarrow \pH^{-1} {i_Z}_* {i_Z}^* \cP \rightarrow \pH^{0} {j_U}_!
{j_U}^* \cP \rightarrow  \cP \rightarrow \pH^0 {i_Z}_* {i_Z}^* \cP
\rightarrow 0.
\]

\noindent We now show in several steps that $\pH^0 {j_U}_! {j_U}^* \cP$
is a highest weight module with the desired integrability.\medskip

\noindent \textit{Step 1: $\pH^0 {j_U}_! {j_U}^* \cP$ is a highest weight
module of highest weight $w \cdot - 2\rho$.}

By definition, we have a surjection $ {j_{C_w}}_!
\C_{C_w}[\ell(w)] \rightarrow \cP \rightarrow 0$. Since ${j_{C_w}}_!
\C_{C_w}$ is supported off of the Schubert divisors, we have
${j_U}_! {j_U}^* {j_{C_w}}_! \C_{C_w} \simeq {j_{C_w}}_! \C_{C_w}$.
By right exactness, we obtain
${j_{C_w}}_! \C_{C_w}[\ell(w)] \rightarrow \pH^0 {j_U}_! {j_U}^* \cP
\rightarrow 0,$ as desired. We remark that similarly, $\pH^0 {i_Z}_*
{i_Z}^* \cP = 0$.\medskip

\noindent \textit{Step 2: The integrability of $\pH^0 {j_U}_!
{j_U}^* \cP$ is $J$.}
In `ground to earth' terms, by Proposition \ref{poles} we need to look at
this sheaf on $\cU$, where by design it has the correct behavior. More
carefully, we have:
\begin{align*}
j_{\cU}^* \pH^0 {j_U}_! {j_U}^* \cP & \simeq \pH^0 j_{\cU}^* {j_U}_!
{j_U}^* \cP \\
& \simeq \pH^0 {j_{\cU \cap U}}_! j_{\cU \cap U}^* \cP  \\
& \simeq \pH^0 {j_{\cU \cap U}}_! j_{\cU \cap U}^* {j_{\cU_{I_V}} }_!
\C[\ell(w)] \\
& \simeq \pH^0 {j_{\cU_{I_V} \cap U}}_! \C[\ell(w)] =  {j_{\cU_{I_V} \cap
U}}_! \C[\ell(w)].
\end{align*}

\noindent As $\cU_{I_V} \cap U = \cU_J$, we are done by Proposition
\ref{poles}. 

We now push our analysis off of $X_w$. To do so, we use the standard
isomorphism of distinguished triangles:
\begin{equation}\label{ddt}
\begin{CD}
{j_{\tu}}_! {j_{\tu}}^* {i_X}_* @>>> {i_X}_* @>>> {i'_{Z}}_! {i'_{Z}}^*
{i_X}_* @>+1>>\\
@VVV @VVV @VVV\\
{i_{X}}_* {j_U}_! {j_U}^*  @>>> {i_X}_* @>>> {i_{X}}_* {i_Z}_* {i_Z}^*
@>+1>>
\end{CD}
\end{equation}

Here the middle vertical map is the identity, and the left and right
vertical isomorphisms are induced by adjunction. Plainly, the isomorphism
\eqref{ddt} comes from an isomorphism of short exact sequences of
functors for the abelian categories of sheaves of abelian groups.

Translating our analysis of $\pH^0 {j_U}_! {j_U}^* \cP$ into the
corresponding statement for $\g$-modules and using Equation \eqref{ddt},
we obtain a surjection of highest weight modules:
\[
\Gamma(G/B, H^0 {j_{\tu}}_! {j_{\tu}}^* \cV) \rightarrow V \rightarrow
0,
\]

\noindent where the former module has integrability $J$. To finish the
proof of Theorem \ref{brg}, it remains only to identify the dual of
$\Gamma(G/B, H^0 {j_{\tu}}_! {j_{\tu}}^* \cV)$ with sections of
$\mathbb{D} \cV$ on $\tu$. But using standard compatibilities of
$\mathbb{D}$, and of composition of derived functors, we obtain: 
\begin{align*}
\mathbb{D} \Gamma(G/B, H^0 {j_{\tu}}_! {j_{\tu}}^* \cV) &\simeq
\Gamma(G/B, \mathbb{D} H^0 {j_{\tu}}_! {j_{\tu}}^* \cV)\\
&\simeq \Gamma(G/B, H^0 \mathbb{D} {j_{\tu}}_! j_{\tu}^* \cV)  \\
&\simeq \Gamma(G/B, H^0 R{j_{\tu}}_* {j_{\tu}}^* \mathbb{D} \cV)\\
&\simeq R^0 \Gamma(G/B, R {j_{\tu}}_* j_{\tu}^* \mathbb{D} \cV) \\
&\simeq \Gamma(\tu, \mathbb{D} \cV).\qedhere
\end{align*}
\end{proof}

If one thinks about the above proof, in fact all we used about $U$ (and
$\tilde{U}$) was the intersection of $U$ with $\cU$. The following
proposition shows our $U$ has the correct components in codimension
$\geqslant 2$ to mimic another feature of Brion's example: 

\begin{pro}
The construction of Theorem \ref{brg} sends parabolic Verma modules to
parabolic Verma modules.
\end{pro}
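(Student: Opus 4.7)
Let $V = M(\lambda, J')$ with $\lambda = w \cdot (-2\rho)$, so that $I_V = J'$, and fix $J \subset J'$. Write $V'$ for the output of the construction; by Theorem \ref{brg}, $V'$ has highest weight $\lambda$ and integrability $J$. The plan is to compute the perverse sheaf associated to $V'$ and match it with the known perverse sheaf of $M(\lambda, J)$, then invoke Beilinson--Bernstein and Riemann--Hilbert to conclude $V' \simeq M(\lambda, J)$.

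Working on $X_w$ as in the proof of Theorem \ref{brg}, with $\cP = j_! \C_{P_{J'} wB/B}[\ell(w)]$ the perverse sheaf of $V$ on $X_w$ and $U = \tu \cap X_w$, open-pullback compatibility yields
\begin{equation*}
\pH^0 {j_U}_! j_U^* \cP \simeq j_! \C_{P_{J'} wB/B \cap U}[\ell(w)].
\end{equation*}
It thus remains to verify the set-theoretic identity $P_{J'} wB/B \cap U = P_J wB/B$, after which the result follows from the corresponding perverse sheaf description of $M(\lambda, J)$ recalled in the proof of Proposition \ref{poles}, by pushing forward to $G/B$ and applying Beilinson--Bernstein.

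For the identity, since $\lambda$ has integrability $J'$, $w$ is the longest element of its $W_{J'}$-coset, giving $P_{J'} wB/B = \bigsqcup_{\sigma \in W_{J'}} C_{\sigma w}$. A standard consequence of $w$ being the longest coset representative is that $\sigma \mapsto \sigma w$ reverses Bruhat order on $W_{J'}$, whence $C_{\sigma w} \subset X_{s_i w}$ iff $s_i \leqslant \sigma$. Therefore $P_{J'} wB/B \cap U$ consists precisely of the cells $C_{\sigma w}$ for those $\sigma \in W_{J'}$ whose reduced expressions omit all $s_i$, $i \in J' \setminus J$, which is exactly $\sigma \in W_J$. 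The main obstacle is precisely this combinatorial step, which matches the excision of the Schubert divisors $X_{s_i w}$, $i \in J' \setminus J$, to the parabolic orbit stratification on $X_w$ via the Bruhat-reversal lemma.
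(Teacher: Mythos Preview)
Your proposal is correct and follows essentially the same route as the paper: both identify the perverse sheaf of $M(\lambda,J')$ as $j_!\C_{P_{J'}wB/B}[\ell(w)]$, reduce the construction to computing $P_{J'}wB/B \cap U$, and verify $P_{J'}wB/B \cap U = P_JwB/B$ via the order-reversing bijection $W_{J'} \to W_{J'}w$, $\sigma \mapsto \sigma w$ (the paper phrases this as a poset isomorphism $W_{J'}w \simeq W_{J'}$ via $\sigma w \mapsto \sigma w_\circ$, citing \cite{Bjorner-Brenti}, which is the same Bruhat-reversal fact you invoke). Your proof even makes explicit the reason $w$ is maximal in $W_{J'}w$, which the paper leaves implicit.
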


\begin{proof}
For $J \subset K \subset I_{L(w \cdot - 2\rho)},$ we know that $M(w
\cdot - 2 \rho, K)$ corresponds to $j_! \C_{P_KwB/B}$, where $P_K$ is the
parabolic subgroup corresponding to $K \subset I$. Applying the
construction (before taking perverse cohomology), we obtain ${j_{U}}_!
j_{U}^* j_! \C_{P_KwB/B} \simeq j_! \C_{U \cap P_KwB/B}$. The claim
follows from the identity $P_KwB/B \setminus Z = P_JwB/B$, i.e. the
identity
\[
W_Kw \setminus \cup_{k \in K \setminus J} \{y \in W: y \leqslant s_kw\} =
W_Jw.
\]

To see this identity, recall by \cite[Exercise 2.26 and proof of
Proposition 2.4.4]{Bjorner-Brenti} that the assignment $w_k w \mapsto w_k
w_\circ$ is an isomorphism of posets $W_Kw \simeq W_K,$ where $w_\circ$
is the longest element of $W_K$. Multiplying the claimed identity on the
right by $w^{-1}$, it is therefore equivalent to:
\[
W_K \setminus \cup_{k \in K \setminus J} \{y \in W_K: y \geqslant s_k \}
= W_J,
\]

\noindent which is clear. 
\end{proof}

While the results in this section concern $\g$ of finite type, we expect
and would be interested to see that similar results hold for $\g$
symmetrizable.

\section{Highest weight modules over symmetrizable quantum
groups}\label{Squantum}

We now extend many results of the previous sections to highest weight
modules over quantum groups $U_q(\lie{g})$, for $\lie{g}$ a Kac--Moody
algebra. Given a generalized Cartan matrix $A$, as for $\g = \lie{g}(A)$,
to write down a presentation for the algebra $U_q(\lie{g})$ via
generators and explicit relations, one uses the symmetrizability of $A$.
When $\g$ is non-symmetrizable, even the formulation of $U_q(\g)$ is
subtle and is the subject of recent research \cite{Fa}. In light of
this, we restrict to $U_q(\g)$ where $\lie{g}$ is symmetrizable.

\subsection{Notation and preliminaries}

We begin by reminding standard definitions and notation. Fix $\lie{g} =
\lie{g}(A)$ for $A$ a symmetrizable generalized Cartan matrix. Fix a
diagonal matrix $D = {\rm diag}(d_i)_{i \in I}$ such that $DA$ is
symmetric and $d_i \in \mathbb{Z}^{>0},\ \forall i \in I$. Let
$(\h, \pi, \hpi)$ be a realization of $A$ as before; further fix a
lattice $P^\vee \subset \h$, with $\Z$-basis $\halpha_i, \check\beta_l,\
i \in I, \ 1 \leqslant l \leqslant |I| - \rk(A)$, such that $P^\vee
\otimes_\Z \C \simeq \h$ and $(\check\beta_l, \alpha_i) \in \Z,\ \forall
i \in I,\ 1 \leqslant l \leqslant |I|-\rk(A)$.
Set $P := \{ \lambda \in \lie{h}^* : (P^\vee, \lambda) \subset \Z \}$ to
be the weight lattice.
We further retain the notations $\rho, \lie{b}, \lie{h}, \lie{l}_J,
P^+_J, M(\lambda,J), I_\lambda$ from previous sections; note we may
and do choose $\rho \in P$. 
We normalize the Killing form $(\cdot,\cdot)$ on $\lie{h}^*$ to satisfy:
$(\alpha_i, \alpha_j) = d_i a_{ij}$ for all $i,j \in I$.

Let $q$ be an indeterminate. Then the corresponding quantum Kac--Moody
algebra $U_q(\lie{g})$ is a $\C(q)$-algebra, generated by elements $f_i,
q^h, e_i,\ i \in I,\ h \in P^\vee$, with relations given in
e.g.~\cite[Definition 3.1.1]{HK}. Among these generators are
distinguished elements $K_i = q^{d_i \halpha_i} \in q^{P^\vee}$. Also
define $U_q^\pm$ to be the subalgebras generated by the $e_i$ and the
$f_i$, respectively.

A \textit{weight} of the quantum torus $\mathbb{T}_q :=
\C(q)[q^{P^\vee}]$ is a $\C(q)$-algebra homomorphism $\chi : \mathbb{T}_q
\to \C(q)$, which we identify with an element $\mu_q \in
(\C(q)^\times)^{2|I| - \rk(A)}$ given an enumeration of $\halpha_i,\ i
\in I$. We will abuse notation and write $\mu_q(q^h)$ for
$\chi_{\mu_q}(q^h)$. There is a partial ordering on the set of weights,
given by: $q^{-\nu} \mu_q \leqslant \mu_q$, for all weights $\nu \in
\Z^{\geqslant 0} \pi$. We will mostly be concerned with \textit{integral}
weights $\mu_q = q^\mu$ for $\mu \in P$, which are defined via:
$q^\mu(q^h) = q^{(h,\mu)},\ h \in P^\vee$.

Given a $U_q(\lie{g})$-module $V$ and a weight $\mu_q$, the
corresponding \textit{weight space} of $V$ is:
\[
V_{\mu_q} := \{ v \in V : q^h v = \mu_q(q^h) v\ \forall h \in P^\vee \}.
\]

\noindent Denote by $\wt V$ the set of weights $\{ \mu_q : V_{\mu_q} \neq
0 \}$.
A $U_q(\g)$-module is \textit{highest weight} if there exists a
nonzero weight vector which generates $V$ and is killed by $e_i,\ \forall
i \in I$.
For a weight $\mu_q$, let $M(\mu_q), L(\mu_q)$ denote the Verma and
simple $U_q(\g)$-modules of highest weight $\mu_q$, respectively.

Writing $\lambda_q$ for the highest weight of $V$, the
\textit{integrability} of $V$ equals:
\begin{equation}
I_V := \{ i \in I : \dim \C(q)[f_i] V_{\lambda_q} < \infty \}.
\end{equation}

\noindent In this case, the parabolic subgroup $W_{I_V}$ acts on $\wt V$
by
\[
s_i(\lambda_q)(q^h) := \lambda_q(q^{\halpha_i})^{-\alpha_i(h)}
\lambda_q(q^h), \qquad h \in P^\vee.
\]

\noindent The braid relations can be checked using by specializing $q$ to
$1$, cf.\ \cite{Lus88}. In particular, $w(q^\lambda) = q^{w \lambda}$ for
$w \in W$ and $\lambda \in P$.

Given a weight $\lambda_q$ and $J \subset I_{L(\lambda_q)}$, the
parabolic Verma module $M(\lambda_q, J)$ co-represents the functor:
\begin{equation}
M \rightsquigarrow \{ m \in M_{\lambda_q} : e_i m = 0\ \forall i \in I,
\quad f_j \text{ acts nilpotently on } m, \forall j \in J \}.
\end{equation}

Note $\lambda_q(q^{\halpha_j}) = \pm q^{n_j}$, $n_j \geqslant 0,\ \forall
j \in J$ cf.\ \cite[Proposition 2.3]{Jantzen-book}. Therefore:
\begin{equation}\label{Eqgvm}
M(\lambda_q,J) \simeq M(\lambda_q)/( f_j^{n_j + 1}
M(\lambda_q)_{\lambda_q}, \forall j \in J).
\end{equation}

In what follows, we will specialize highest weight modules at $q=1$, as
pioneered by Lusztig \cite{Lus88}. Let $A_1$ denote the local ring of
rational functions $f \in \C(q)$ that are regular at $q=1$. Fix an
integral weight $\lambda_q = q^\lambda \in q^P$ and a highest weight
module $V$ with highest weight vector $v_{\lambda_q} \in V_{\lambda_q}$.
Then the classical limit of $V$ at $q=1$, defined to be $V^1 := A_1 \cdot
v_{\lambda_q} / (q-1) A_1 \cdot v_{\lambda_q}$, is a highest weight
module over $U (\g)$ with highest weight $\lambda$. Moreover the
characters of $V$ and $V^1$ are ``equal'', i.e.~upon identifying $q^P$
with $P$.

\subsection{Three invariants of highest weight modules}

We begin by extending Theorem \ref{maintheo} to quantum groups.

\begin{theo}\label{Tqequiv}
Let $V$ be a highest weight module with integral highest weight
$\lambda_q$. The following data are equivalent:
\begin{enumerate}
\item $I_V$, the integrability of $V$.
\item $\conv V^1$, the convex hull of the specialization of $V$.
\item The stabilizer of $\ch V$ in $W$.
\end{enumerate}
\end{theo}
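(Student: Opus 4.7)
\textbf{Proof plan for Theorem \ref{Tqequiv}.}

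The plan is to reduce everything to the classical Theorem \ref{main1} by specializing at $q=1$. The paper's setup notes that $\ch V = \ch V^1$ after identifying $q^P$ with $P$, so the Weyl-group stabilizer of $\ch V$ equals that of $\ch V^1$; by the corollary following Theorem \ref{intgrp}, this stabilizer equals the stabilizer of $\conv V^1$ in $W$, which by Theorem \ref{main1} is $W_{I_{V^1}}$. Thus data (2) and (3) of Theorem \ref{Tqequiv} are, via the classical theorem applied to $V^1$, already equivalent to each other and to $I_{V^1}$. The only substantive point to prove is therefore the equality $I_V = I_{V^1}$.

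The containment $I_V \subset I_{V^1}$ is immediate from specialization: a relation $f_i^{n+1} v_{\lambda_q} = 0$ in $V$ descends to $f_i^{n+1} \bar v_{\lambda} = 0$ in $V^1$, so $i \in I_{V^1}$. For the converse I would first establish a small strengthening of Lemma \ref{wint}(2), valid uniformly in both the classical and quantum settings: for any highest weight module $V$ with highest weight $\lambda_q$, the character $\ch V$ is $s_i$-invariant if and only if $i \in I_V$. The forward direction is Lemma \ref{wint}(2). For the reverse, $s_i$-invariance forces $(\halpha_i, \lambda) =: n \in \Z^{\geqslant 0}$, since otherwise $s_i \lambda_q > \lambda_q$ yet $\dim V_{\lambda_q} = \dim V_{s_i \lambda_q} = 1$, contradicting the highest weight assumption on $V$. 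Then for every $k \geqslant 1$, we have $s_i(\lambda_q - (n+k)\alpha_i) = \lambda_q + k \alpha_i > \lambda_q$, so $V_{\lambda_q + k\alpha_i} = 0$; by $s_i$-invariance of $\ch V$, also $V_{\lambda_q - (n+k)\alpha_i} = 0$. In particular $f_i^{n+1} v_{\lambda_q} = 0$, so $i \in I_V$.

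Applying this characterization both to $V$ and to $V^1$ and invoking $\ch V = \ch V^1$ yields $I_V = I_{V^1}$, which together with the classical Theorem \ref{main1} for $V^1$ closes the equivalence. No step is especially difficult; the main point requiring care is running the converse of the $s_i$-invariance characterization in the quantum setting, but this goes through as in the classical case because $U_q(\g)$ shares with $U(\g)$ the weight grading, the triangular decomposition, and the annihilation of the highest weight vector by the $e_i$. A secondary care point is that integrality of $\lambda_q = q^\lambda$ ensures $\lambda_q(q^{\halpha_i}) = q^{(\halpha_i,\lambda)}$ has a positive sign, so the integrability threshold $n_i$ agrees with $(\halpha_i, \lambda)$ as in the classical case.
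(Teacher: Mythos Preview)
Your proposal is correct and follows exactly the paper's strategy: reduce to the classical Theorem~\ref{main1} via the equality $\ch V = \ch V^1$ under specialization, the only substantive point being $I_V = I_{V^1}$. Your route to $I_V = I_{V^1}$ through the $s_i$-invariance characterization works, but a quicker argument is available: since the simple roots are linearly independent, the weight space $V_{q^{-k\alpha_i}\lambda_q}$ is spanned by $f_i^k v_{\lambda_q}$ (and likewise $V^1_{\lambda - k\alpha_i}$ by $f_i^k \bar v_\lambda$), so character equality directly gives $f_i^k v_{\lambda_q} = 0 \iff f_i^k \bar v_\lambda = 0$, whence $I_V = I_{V^1}$.
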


\begin{proof}
The equivalence of the three statements follows from their classical
counterparts, using the equality of characters under specialization.
\end{proof}

\subsection{Three positive formulas for the weights of simple modules}

We now extend the results of Section \ref{Sbump} to quantum groups.

\begin{theo}\label{Tqwts}
Let $\lambda_q$ be integral.
The following are equivalent:
\begin{enumerate}
\item $\wt V = \wt M(\lambda_q, I_V)$.
\item $\wt_{I^p_V} V = q^{-\Z^{\geqslant 0} \pi_{I^p_V}} \lambda_q$,
where $I^p_V = I_{L(\lambda_q)} \setminus I_V$.
\end{enumerate} 

\noindent In particular, if $V$ is simple, or more generally $|I^p_V|
\leqslant 1$, then $\wt V = \wt M(\lambda_q, I_V)$.
\end{theo}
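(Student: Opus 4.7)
I will mirror the proof of the classical Theorem \ref{weights}, substituting quantum analogues of its two main ingredients: the Integrable Slice Decomposition and the PBW-plus-commutator argument. The only place where the quantum deformation requires genuine care will be a direct computation with the quantum commutator $[e_i, f_i] = (K_i - K_i^{-1})/(q^{d_i} - q^{-d_i})$ replacing the classical $[e_i, f_i] = \halpha_i$; I expect this to be the main obstacle.

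First I would record a quantum Integrable Slice Decomposition
\[
\wt M(\lambda_q, J) = \bigsqcup_{\mu \in \Z^{\geqslant 0}(\pi \setminus \pi_J)} \wt L_{\fl_J}(q^{-\mu} \lambda_q),
\]
which transfers from Proposition \ref{slice} via the presentation \eqref{Eqgvm} together with the equality of characters under $q=1$ specialization (using $q^P \simeq P$). The forward implication (1) $\Rightarrow$ (2) then drops out by reading off the $\mu \in \Z^{\geqslant 0} \pi_{I^p_V}$ piece, whose highest weights are exactly $q^{-\mu} \lambda_q$.

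For the reverse, assume (2) and suppose, for contradiction, that some $q^{-\mu} \lambda_q$ with $\mu \in \Z^{\geqslant 0} \pi_{I \setminus I_V}$ does not lie in $\wt V$. Decomposing $I \setminus I_V = I^p_V \sqcup I^q$ with $I^q := I \setminus I_{L(\lambda_q)}$, split $\mu = \mu_p + \mu_q$ accordingly; note $\mu_q \neq 0$, since otherwise $q^{-\mu} \lambda_q \in \wt_{I^p_V} V$ by hypothesis. Since $V_{q^{-\mu_p}\lambda_q} \neq 0$ (again by (2)), quantum PBW for $U_q^-(\fl_{I^p_V})$ supplies $\partial$ of weight $-\mu_p$ with $\partial v_{\lambda_q} \neq 0$. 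Enumerating $I^q = \{i_1, \ldots, i_n\}$ and writing $\mu_q = \sum_k m_k \alpha_{i_k}$, form $E := \prod_k e_{i_k}^{m_k}$ and $F := \prod_k f_{i_k}^{m_k}$; our assumption gives $\partial F v_{\lambda_q} = 0$, hence also $E \partial F v_{\lambda_q} = 0$. Using $[e_i, f_j] = 0$ for $i \neq j$ to commute $E$ past $\partial$ and through $F$, this rearranges to $\partial \cdot \prod_k \bigl( e_{i_k}^{m_k} f_{i_k}^{m_k} \bigr) v_{\lambda_q} = 0$.

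The crux is to verify that each $e_{i_k}^{m_k} f_{i_k}^{m_k}$ acts on $v_{\lambda_q}$ as a nonzero scalar. An induction using the quantum commutator expresses this scalar as a product of terms $\bigl( \lambda_q(K_{i_k}) q^{d_{i_k}(1-s)} - \lambda_q(K_{i_k})^{-1} q^{d_{i_k}(s-1)} \bigr)/\bigl( q^{d_{i_k}} - q^{-d_{i_k}} \bigr)$ for $1 \leqslant s \leqslant m_k$; the condition $i_k \in I^q$, namely $\lambda_q(K_{i_k}) \notin \{\pm q^{d_{i_k} n} : n \in \Z^{\geqslant 0}\}$ (cf.~\cite[Proposition 2.3]{Jantzen-book}), forces every factor to be nonzero. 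This yields $\partial v_{\lambda_q} = 0$, contradicting our choice of $\partial$. The ``in particular'' consequence for simple modules, or more generally $|I^p_V| \leqslant 1$, follows as in the classical proof: in these cases (2) is either vacuous (when $I^p_V = \emptyset$) or reduces to a single $\lie{sl}_2$-root-string computation, since $f_i$ fails to act nilpotently on $v_{\lambda_q}$ for the unique $i \in I^p_V$.
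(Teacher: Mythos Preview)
Your proposal is correct and follows essentially the same route as the paper. The paper first records the quantum Integrable Slice Decomposition (your first displayed formula) as a separate proposition proved via specialization, uses it for $(1)\Rightarrow(2)$, and then for $(2)\Rightarrow(1)$ simply writes ``this follows by mimicking the proof of Theorem \ref{weights}''; you have carried out that mimicking explicitly, including the quantum $\mathfrak{sl}_2$ scalar computation that the paper leaves to the reader.
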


To prove Theorem \ref{Tqwts}, we will need the Integrable Slice
Decomposition for quantum parabolic Verma modules.

\begin{pro}\label{Pqint}
Let $\lambda_q$ be an integral weight and $M(\lambda_q,J) =: V$ a
parabolic Verma module. Then:
\begin{equation}\label{Eqint}
\wt M(\lambda_q, J) = \bigsqcup_{\mu \in \Z^{\geqslant 0} (\pi \setminus
\pi_J)} \wt L_{\hspace{.5mm} \fl_J}(q^{-\mu} \lambda_q),
\end{equation}

\noindent where $L_{\hspace{.5mm} \fl_J}(\nu)$ denotes the simple
$U_q(\fl_J)$-module of highest weight $\nu$. In particular, $\wt V^1 =
\wt M(\lambda_1,J)$.
\end{pro}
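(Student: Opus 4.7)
The plan is to mirror the proof of the classical Integrable Slice Decomposition (Proposition \ref{slice}) step by step, substituting its ingredients with their quantum counterparts, and then to deduce the ``in particular'' specialization statement $\wt V^1 = \wt M(\lambda_1, J)$ as an immediate consequence of the character equality between integrable highest weight modules over symmetrizable quantum Kac--Moody algebras and their classical limits. The relevant quantum analogues are all available in this setting: the presentation \eqref{Eqgvm} for $M(\lambda_q, J)$, Lusztig's theorem that integrable simple modules over $U_q(\fl_J)$ have the same characters as their classical counterparts, and the equivalence in Theorem \ref{Tqequiv} relating integrability to the symmetry of $\ch V$.

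For disjointness of the right-hand side of \eqref{Eqint}, note that $\wt L_{\fl_J}(q^{-\mu} \lambda_q) \subset q^{-\mu + \Z\pi_J}\lambda_q$, so distinct choices of $\mu \in \Z^{\geqslant 0}(\pi \setminus \pi_J)$ land in disjoint cosets by the linear independence of simple roots. For the inclusion $\supset$, the presentation \eqref{Eqgvm} shows that the kernel of $M(\lambda_q) \twoheadrightarrow M(\lambda_q, J)$ has weights bounded above by some $s_j \cdot \lambda_q$, $j \in J$, so $q^{-\mu}\lambda_q \in \wt M(\lambda_q, J)$ for every $\mu \in \Z^{\geqslant 0}\pi_{I \setminus J}$; since $\lambda_q - \mu + \alpha_j \notin \lambda_q - \Z^{\geqslant 0}\pi$ for $j \in J$, every nonzero vector there is automatically a highest weight vector for $U_q(\fl_J)$, generating an integrable highest weight module whose weights coincide with those of $L_{\fl_J}(q^{-\mu}\lambda_q)$ by the character comparison. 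For the reverse inclusion $\subset$, I would form the integrable slice
\[
S_\mu := \bigoplus_{\nu \in q^{-\mu + \Z \pi_J}\lambda_q} M(\lambda_q, J)_\nu, \qquad \mu \in \Z^{\geqslant 0}\pi_{I \setminus J},
\]
which lies in Category $\mathcal{O}$ for $U_q(\fl_J)$ and is $J$-integrable, placing $\wt M(\lambda_q, J)$ inside the $J$ Tits cone; writing an arbitrary weight as $\nu = q^{-\mu_J - \mu_{I \setminus J}}\lambda_q$ and reducing to $J$-dominant $\nu$ by $W_J$-invariance, one invokes the non-degeneracy criterion (the quantum analogue of Proposition \ref{nonde}(1)) applied to $\nu$ with respect to $q^{-\mu_{I \setminus J}}\lambda_q$, whose validity again reduces to the classical case via character comparison.

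The specialization statement is then immediate: both $\wt V^1$ and $\wt M(\lambda_1, J)$ decompose as $\bigsqcup_\mu \wt L_{\fl_J}(\lambda_1 - \mu)$ under the identification $q^P \leftrightarrow P$, by the main formula \eqref{Eqint} and its classical counterpart Proposition \ref{slice}. I expect the main obstacle to be the careful bookkeeping of the non-degeneracy criterion in the multiplicative quantum-weight notation, together with confirming the character equality of integrable simple modules in the generality we need for $U_q(\fl_J)$ with $\fl_J$ an arbitrary standard Levi of a symmetrizable Kac--Moody algebra; both are controlled by Lusztig's quantization theory, so no new representation-theoretic input should be required, but the translation between additive and multiplicative weight notation merits care.
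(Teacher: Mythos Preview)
Your approach differs from the paper's in a meaningful way. The paper does not rerun the classical argument in the quantum setting; it argues entirely by specialization. Since $\ch V = \ch V^1$ and hence $I_{V^1} = I_V = J$, there is a surjection $M(\lambda_1, J) \twoheadrightarrow V^1$, which immediately yields $\wt V^1 \subset \wt M(\lambda_1, J)$. After identifying both sides of \eqref{Eqint} with their classical counterparts (via Lusztig's character equality for integrable simples and the classical Proposition~\ref{slice}), this single observation disposes of the inclusion $\subset$ with no further work. The only remaining content is then $\supset$, for which --- as you correctly isolate --- one only needs $q^{-\Z^{\geqslant 0}\pi_{I\setminus J}}\lambda_q \subset \wt M(\lambda_q, J)$, and this follows from \eqref{Eqgvm} by considering weights.

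Your direct approach to $\subset$ has a gap. In the classical proof of Proposition~\ref{slice}, applying the non-degeneracy criterion to a $J$-dominant weight $\nu$ with respect to $\lambda - \mu_{I\setminus J}$ requires writing $\nu = \lambda - \mu_L - \sum_k \beta_k$ via the induction formula of Proposition~\ref{para}(2); non-degeneracy then follows from the connectedness of each $\supp \beta_k$. That decomposition rests on a PBW-type splitting $M(\lambda, J) \simeq U(\fu_J^-) \otimes L^{\max}_{\fl_J}(\lambda)$, whose quantum analogue (defining $U_q(\fu_J^-)$ and the corresponding tensor decomposition) is exactly what Remark~\ref{Rqpbw} flags as delicate beyond finite type. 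Your claim that this step ``reduces to the classical case via character comparison'' conflates two things: the non-degeneracy \emph{criterion} itself transfers (it is a statement about weights of integrable simples), but \emph{verifying its hypothesis} for a given $\nu \in \wt M(\lambda_q, J)$ needs structural input about $M(\lambda_q, J)$ that character comparison alone does not supply. The fix is to observe $\wt V = \wt V^1 \subset \wt M(\lambda_1, J)$ --- but that is precisely the paper's shortcut.
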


\begin{proof}
It suffices to check the formula after specialization. Since the
characters of $V$ and $V^1$ are ``equal'', $I_V = I_{V^1}$. It therefore
suffices to check the surjection $M(\lambda_1,J) \to V^1$ induces an
equality of weights.
By the Integrable Slice Decomposition \ref{slice}, it suffices to show
that $q^{-\Z^{\geqslant 0} \pi_{I \setminus J}} \lambda_q \subset \wt
M(\lambda_q,J)$.
But this is clear from \eqref{Eqgvm} by considering weights and using
that $f_j^{n_j+1} M(\lambda_q)_{\lambda_q}$ is a highest weight line, for
all $j \in J$.
\end{proof}

\begin{re}\label{Rqpbw}
If $\g$ is of finite type, then in fact $V^1 \simeq M(\lambda_1,J)$. The
proof uses \cite[Chapter 2]{BNPP} and the PBW theorem, as well as
arguments similar to the construction of Lusztig's canonical basis
\cite{Lus90} to define $U_q(\lie{u}_J^-)$.
It would be interesting to know if the parabolic Verma module
$M(\lambda_q,J)$ specializes to $M(\lambda_1,J)$ for all symmetrizable
$\g$.
\end{re}

\begin{proof}[Proof of Theorem \ref{Tqwts}]
By Proposition \ref{Pqint}, (1) implies (2).
For the converse, by the Integrable Slice Decomposition \eqref{Eqint}, it
suffices to show that $q^{-\Z^{\geqslant 0} (\pi \setminus \pi_{I_V})}
\lambda_q \subset \wt V$. This follows by mimicking the proof of Theorem
\ref{weights}.
\end{proof}

As an application of Theorem \ref{Tqwts}, we obtain the following
positive formulas for weights of simple modules $\wt L(\lambda_q)$.

\begin{pro}
Suppose $\lambda_q$ is integral.
Write $\fl$ for the Levi subalgebra corresponding to $I_{L(\lambda_q)}$,
and write $L_\fl(\nu_q)$ for the simple $U_q(\fl)$ module with highest
weight $\nu_q$. Then:
\begin{equation}\label{Epositive1}
\wt L(\lambda_q) = \bigsqcup_{\mu \in \Z^{\geqslant 0} \pi \setminus
\pi_{I_{L(\lambda_q)}} } \wt L_{\mathfrak{l}}(q^{-\mu} \lambda_q).
\end{equation}
\end{pro}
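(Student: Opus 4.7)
The plan is to deduce \eqref{Epositive1} by chaining two results already established in this section. First I would invoke Theorem \ref{Tqwts}: since $V = L(\lambda_q)$ is simple, the ``in particular'' clause yields
\[
\wt L(\lambda_q) = \wt M(\lambda_q, I_{L(\lambda_q)}).
\]
This applies because, by definition, the integrability of $L(\lambda_q)$ is $I_{L(\lambda_q)}$, so the potentially integrable set $I^p_V = I_{L(\lambda_q)} \setminus I_V$ is empty and condition (2) of Theorem \ref{Tqwts} is vacuous.

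Next I would apply the quantum Integrable Slice Decomposition (Proposition \ref{Pqint}) with $J = I_{L(\lambda_q)}$, which gives
\[
\wt M(\lambda_q, I_{L(\lambda_q)}) = \bigsqcup_{\mu \in \Z^{\geqslant 0}(\pi \setminus \pi_{I_{L(\lambda_q)}})} \wt L_{\fl}(q^{-\mu} \lambda_q),
\]
where $\fl = \fl_{I_{L(\lambda_q)}}$ is exactly the Levi in the statement and $L_{\fl}(\nu_q)$ is the simple $U_q(\fl)$-module of highest weight $\nu_q$. Concatenating the two displays gives \eqref{Epositive1}, with the disjointness assertion inherited directly from Proposition \ref{Pqint}.

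There is no real obstacle here: the substantive content is absorbed into Theorem \ref{Tqwts} and Proposition \ref{Pqint}, both of which are established by specialization at $q=1$ together with the classical Integrable Slice Decomposition \ref{slice}. The argument exactly mirrors the derivation of Proposition \ref{p1} from Theorem \ref{weights} and Proposition \ref{slice} in the classical setting, so no new techniques are needed.
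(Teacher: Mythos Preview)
Your proposal is correct and matches the paper's approach exactly: the paper presents this proposition as an immediate application of Theorem \ref{Tqwts} combined with the quantum Integrable Slice Decomposition (Proposition \ref{Pqint}), precisely as you outline. The parallel you draw with the classical derivation of Proposition \ref{p1} from Theorem \ref{weights} and Proposition \ref{slice} is also explicitly the template the paper follows.
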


\begin{pro}\label{Pqbump}
Let $\lambda_q$ be integral and $V := L(\lambda_q)$. Then:
\begin{equation}\label{Eqsimple}
\wt V^1 = (\lambda + \Z \pi) \cap \conv V^1.
\end{equation}

\noindent By specialization, this determines the weights of
$L(\lambda_q)$.
\end{pro}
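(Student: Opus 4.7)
The plan is to reduce the statement to its classical analogue, Proposition \ref{p3} (equivalently, Proposition \ref{noholes} for the appropriate parabolic Verma), via the $q \to 1$ specialization, using results already established in the paper. Since $V = L(\lambda_q)$ is simple, its potential integrability $I^p_V = I_{L(\lambda_q)} \setminus I_V$ is empty, so Theorem \ref{Tqwts} applies unconditionally and yields $\wt V = \wt M(\lambda_q, I_V)$.

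To pass to the classical setting, I would use that $\ch V = \ch V^1$ under the identification $q^P \cong P$ recorded in the preliminaries, so that $\wt V^1 = \wt V$ as subsets of $P$. Applying Proposition \ref{Pqint} to the quantum parabolic Verma module $M(\lambda_q, I_V)$, whose specialization has weight set $\wt M(\lambda, I_V)$, this identifies $\wt V^1 = \wt M(\lambda, I_V)$. The classical Proposition \ref{noholes} then supplies
\[
\wt M(\lambda, I_V) = (\lambda + \Z\pi) \cap \conv M(\lambda, I_V),
\]
and since $\wt V^1 = \wt M(\lambda, I_V)$ as sets, their convex hulls coincide, giving $\wt V^1 = (\lambda + \Z\pi) \cap \conv V^1$. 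The concluding remark that this determines the weights of $L(\lambda_q)$ is immediate, as these correspond to those of $V^1$ under $q^P \cong P$.

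Because every ingredient is already in place, there is no substantive obstacle; the argument amounts to careful bookkeeping of the identifications $q^P \cong P$ and of the fact that the \emph{quantum} integrability $I_V$ is the appropriate parameter for both the quantum and classical parabolic Verma modules appearing in the chain of identifications. If one wanted a more self-contained approach avoiding Proposition \ref{Pqint}, one could instead replicate the classical proof of Proposition \ref{noholes} directly on the quantum side, using the Integrable Slice Decomposition \eqref{Eqint} together with the observation that each $J$-dominant convex combination of weights of $M(\lambda_q,I_V)$ is non-degenerate with respect to a weight in the slice containing it; but the specialization route is shorter.
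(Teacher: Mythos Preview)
Your proposal is correct and matches the paper's approach: the paper presents this proposition as an immediate consequence of Theorem \ref{Tqwts} together with specialization to $q=1$ and the classical Proposition \ref{noholes}, which is precisely the chain of identifications you assemble. One small simplification: rather than invoking Proposition \ref{Pqint}, you could note directly that $V^1$ is a classical highest weight module with $I_{V^1} = I_V = I_{L(\lambda)}$ (by equality of characters), hence $I^p_{V^1} = \emptyset$, and apply the classical Theorem \ref{weights} to $V^1$; but your route via Proposition \ref{Pqint} is equally valid.
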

\begin{pro}
Suppose $\lambda_q$ is integral and has finite
$W_{I_{L(\lambda_q)}}$-isotropy. Then:
\begin{equation}\label{Epositive2}
\wt L(\lambda_q) = \bigcup_{w \in W_{I_{L(\lambda_q)} }} w \{ q^\nu : \nu
\in P^+_{I_{L(\lambda_q)}},\ q^\nu \leqslant \lambda_q \}.
\end{equation}
\end{pro}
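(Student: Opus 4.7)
The plan is to reduce the quantum statement to its classical counterpart, Proposition \ref{fs}, using the specialization machinery and the results already established in Section \ref{Squantum}. The strategy closely parallels the classical deduction, where Proposition \ref{fs} followed from combining Theorem \ref{weights} with Corollary \ref{Cfinite}.

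First I would invoke Theorem \ref{Tqwts}: since $V = L(\lambda_q)$ is simple, we have $\wt L(\lambda_q) = \wt M(\lambda_q, I_{L(\lambda_q)})$, reducing the problem to computing weights of a quantum parabolic Verma module. Then by the quantum Integrable Slice Decomposition (Proposition \ref{Pqint}), the weights of $M(\lambda_q, I_{L(\lambda_q)})$ are in natural bijection with $\wt M(\lambda_1, I_{L(\lambda_q)})$ under the identification $q^\nu \leftrightarrow \nu$ on integral weights.

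Next I would verify the hypothesis transfers across specialization. Since $w(q^\lambda) = q^{w\lambda}$ for $\lambda \in P$ and $w \in W$, the stabilizer of $\lambda_q$ in $W_{I_{L(\lambda_q)}}$ coincides with the stabilizer of $\lambda$. Hence finiteness of the former yields finiteness of the latter, so Corollary \ref{Cfinite} applies at $q=1$ and gives
\[
\wt M(\lambda_1, I_{L(\lambda_q)}) = \bigcup_{w \in W_{I_{L(\lambda_q)}}} w \{\nu \in P^+_{I_{L(\lambda_q)}} : \nu \leqslant \lambda\}.
\]

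Finally, I would translate back via $q^{(\cdot)}$. Observing that $q^\nu \leqslant \lambda_q$ iff $\nu \leqslant \lambda$ (from the definition of the partial order on integral weights), that $w(q^\nu) = q^{w\nu}$, and that $\nu \in P^+_{I_{L(\lambda_q)}}$ corresponds precisely to the $J$-dominance condition on $q^\nu$, the classical formula immediately yields \eqref{Epositive2}. The only genuinely substantive ingredients are Theorem \ref{Tqwts} and Proposition \ref{Pqint}, both already proved; the present step is essentially a bookkeeping exercise in translating between the classical and quantum labelings, with the main thing to watch being that the Weyl group action, partial order, and dominance conditions are all compatible with the specialization map $q^\nu \leftrightarrow \nu$ on the integral weight lattice.
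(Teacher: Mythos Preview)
Your proposal is correct and follows essentially the same route the paper takes: the proposition is stated as an application of Theorem \ref{Tqwts}, and the intended argument is precisely to reduce via specialization (Proposition \ref{Pqint}) to the classical Corollary \ref{Cfinite}/Proposition \ref{fs}, exactly as you outline. The bookkeeping checks you list (compatibility of the $W$-action, the partial order, and $J$-dominance under $q^{(\cdot)}$) are the right ones, and nothing further is needed.
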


\subsection{Weights and convex hulls of highest weight modules}

The result and arguments of Section \ref{Slepowsky} apply without change
to quantum groups.

\begin{theo}\label{TqLepowsky}
Fix $J \subset I_{L(\lambda_q)}$ and define $J_q^p := I_{L(\lambda_q)}
\setminus J$. The following are equivalent:
\begin{enumerate}
\item $\wt V = \wt M(\lambda_q,J)$ for every $V$ with $I_V = J$.

\item The Dynkin diagram for $\lie{g}_{J_q^p}$ is complete.
\end{enumerate}
\end{theo}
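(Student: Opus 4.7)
The plan is to adapt the proof of Theorem \ref{eek} to $U_q(\g)$, leveraging Theorem \ref{Tqwts} (the quantum analogue of Theorem \ref{weights}), the Integrable Slice Decomposition (Proposition \ref{Pqint}), and specialization at $q=1$. The key observation is that for any highest weight $U_q(\g)$-module $V$ of integral highest weight $\lambda_q$, the specialization $V^1$ is a classical highest weight $U(\g)$-module of highest weight $\lambda$ with the same integrability $I_V$, and $\wt V$ matches $\wt V^1$ under the identification $q^P \leftrightarrow P$. This lets us transport $U_q(\lie{sl}_2)$-theoretic inputs to and from the classical setting.

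For the direction (2) $\Rightarrow$ (1), we assume $\lie{g}_{J_q^p}$ is connected. By Theorem \ref{Tqwts}, we would reduce to the case $I_V = \emptyset$ and $I_{L(\lambda_q)} = I$, so that $V$ has trivial integrability and every simple direction is potentially integrable. For each $i \in I$, form the quotient $V_i := V / U_q(\g) \cdot f_i^{n_i + 1} v_{\lambda_q}$, where $n_i \geqslant 0$ comes from \eqref{Eqgvm}; this is a highest weight module of integrability $I \setminus \{i\}$ with $|I^p_{V_i}| = 1$, so Theorem \ref{Tqwts} yields $\wt V_i = \wt M(\lambda_q, I \setminus i)$, whence $\wt M(\lambda_q, I \setminus i) \subseteq \wt V$. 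It then remains to verify
\[
\wt M(\lambda_q) \;=\; \bigcup_{i \in I} \wt M(\lambda_q, I \setminus i),
\]
which by Proposition \ref{Pqint} reduces to the classical identity established in the proof of Theorem \ref{eek}.

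For the direction (not 2) $\Rightarrow$ (not 1), we would choose $j, j' \in J_q^p$ with $(\halpha_j, \alpha_{j'}) = 0$, and compare $M(\lambda_q, J)$ with its further quotient by the submodule generated by $f_j^{n_j+1} f_{j'}^{n_{j'}+1} v_{\lambda_q}$, where $n_j, n_{j'}$ are as in \eqref{Eqgvm}. Both modules have integrability $J$, since the quotient cannot gain integrability at $j$ or $j'$: the weight vectors $f_j^{n_j+1} v_{\lambda_q}$ and $f_{j'}^{n_{j'}+1} v_{\lambda_q}$ lie outside the weight support of the ideal quotient. The assertion that their weights differ precisely at $s_j s_{j'} \cdot \lambda_q$ localizes, by orthogonality of $\alpha_j$ and $\alpha_{j'}$, to a computation inside the subalgebra generated by $\{e_j, f_j, e_{j'}, f_{j'}\}$ together with the quantum torus, which factors as $U_q(\lie{sl}_2) \otimes U_q(\lie{sl}_2)$; specialization to $q=1$ then recovers the classical $A_1 \times A_1$ calculation used in the proof of Theorem \ref{eek}.

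The main obstacle will be verifying the lower bound $\wt M(\lambda_q, I \setminus i) \supset \{ q^{-\sum_j k_j \alpha_j} \lambda_q : k_j \in \Z^{\geqslant 0},\ k_i \geqslant k_j,\ \forall j \in I \}$ underlying the (2)$\Rightarrow$(1) step, as this is the quantum avatar of the $\lie{sl}_2$-representation-theoretic input in the classical argument. Happily, Proposition \ref{Pqint} guarantees that $\wt M(\lambda_q, I \setminus i)$ and $\wt M(\lambda, I \setminus i)$ correspond under $q^P \leftrightarrow P$, so the inclusion is inherited from the classical case. Everything else is a direct transcription of the arguments in Section \ref{Slepowsky}.
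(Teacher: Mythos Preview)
Your approach is precisely the paper's: it simply states that the arguments of Section~\ref{Slepowsky} go through without change in the quantum setting, and you have spelled out that transcription. The overall architecture---reduce via Theorem~\ref{Tqwts} to the case $J=\emptyset$, $I_{L(\lambda_q)}=I$, then exhibit for each $i$ a quotient with integrability $I\setminus\{i\}$ and invoke the union identity $\wt M(\lambda_q)=\bigcup_i \wt M(\lambda_q,I\setminus i)$ via Proposition~\ref{Pqint} and the classical computation---is correct.

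There is, however, a concrete error in your construction of $V_i$. You set $V_i := V / U_q(\g)\cdot f_i^{n_i+1} v_{\lambda_q}$, but this kills the $i$th direction, giving $I_{V_i}\ni i$ and hence $I_{V_i}^p = I\setminus\{i\}$, which has size $|I|-1$, not~$1$. You have the indices reversed: to obtain a quotient with integrability $I\setminus\{i\}$ and $|I^p_{V_i}|=1$, you must instead quotient by the submodule generated by $\{f_j^{n_j+1} v_{\lambda_q} : j\neq i\}$. A weight argument (the weight $s_i\cdot\lambda$ does not lie below any $s_j\cdot\lambda$ for $j\neq i$) then confirms that $i\notin I_{V_i}$, so $I_{V_i}=I\setminus\{i\}$ exactly, and Theorem~\ref{Tqwts} applies as you intend. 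With this correction the argument goes through.

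One small terminological point: you write ``$\lie{g}_{J_q^p}$ is connected'' where you mean \emph{complete}; the classical lower bound you invoke at the end requires that every node be adjacent to $i$, not merely that the diagram be connected.
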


\subsection{An alternating formula for the weights of simple modules}

The main result of Section \ref{Sweylkac} follows from combining Theorems
\ref{Tsimple}, \ref{Tqwts}, Proposition \ref{Pqint}, and specializing.

\begin{theo}\label{qwwks}
For all integral $\lambda_q$ with finite stabilizer in
$W_{I_{L(\lambda_q)}}$, we have:
\begin{equation}
\wt L(\lambda_q) = \sum_{w \in W_{I_{L(\lambda_q)}} } w
\frac{\lambda_q}{\prod_{\alpha \in \pi} (1 - q^{-\alpha})}.
\end{equation}
\end{theo}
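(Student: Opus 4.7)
The plan is to mirror the classical argument for Theorem \ref{Tsimple} (i.e., Proposition \ref{wkpv}) in the quantum setting, leveraging the tools now available for $U_q(\g)$-modules. The overall strategy reduces the simple-module statement to a parabolic Verma statement, reduces that to an integrable Levi statement, and then transfers the latter from the classical case via specialization.

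\textbf{Step 1: Reduce from $L(\lambda_q)$ to $M(\lambda_q, I_{L(\lambda_q)})$.} Since $L(\lambda_q)$ is simple, $I_V^p = \emptyset$, so Theorem \ref{Tqwts} gives $\wt L(\lambda_q) = \wt M(\lambda_q, I_{L(\lambda_q)})$. Thus it suffices to establish the quantum analogue of Proposition \ref{wkpv}: for any $J \subset I_{L(\lambda_q)}$ such that $\lambda_q$ has finite stabilizer in $W_J$,
\begin{equation*}
\wt M(\lambda_q, J) = \sum_{w \in W_J} w \frac{\lambda_q}{\prod_{\alpha \in \pi}(1 - q^{-\alpha})}.
\end{equation*}

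\textbf{Step 2: Expand the right-hand side along $\pi \setminus \pi_J$.} As in the proof of Proposition \ref{wkpv}, the geometric series for the simple roots not in $J$ commutes with the $W_J$-action (since $W_J$ permutes such roots among themselves positively -- by definition $W_J$ preserves $\Delta^+ \setminus \Delta^+_J$). Hence the right-hand side rearranges to
\begin{equation*}
\sum_{\mu \in \Z^{\geqslant 0}(\pi \setminus \pi_J)}\ \sum_{w \in W_J} w \frac{q^{-\mu}\lambda_q}{\prod_{\alpha \in \pi_J}(1 - q^{-\alpha})}.
\end{equation*}

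\textbf{Step 3: Identify the inner sum with an integrable Levi character.} By the quantum Integrable Slice Decomposition (Proposition \ref{Pqint}), the left-hand side decomposes as $\bigsqcup_{\mu} \wt L_{\fl_J}(q^{-\mu}\lambda_q)$ over the same index set. It therefore remains to show that, for each $\mu$,
\begin{equation*}
\wt L_{\fl_J}(q^{-\mu}\lambda_q) = \sum_{w \in W_J} w \frac{q^{-\mu}\lambda_q}{\prod_{\alpha \in \pi_J}(1 - q^{-\alpha})},
\end{equation*}
an equality of multiplicity-free characters of an integrable highest weight $U_q(\fl_J)$-module whose highest weight retains finite $W_J$-stabilizer (inherited from $\lambda_q$, since $\mu$ is orthogonal to none of the constraints that could enlarge the stabilizer in a problematic way -- one checks $q^{-\mu}\lambda_q$ has finite $W_J$-isotropy because its weight pairings with $\halpha_j$, $j \in J$, differ from those of $\lambda_q$ only by integer shifts and the finiteness of the isotropy is preserved).

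\textbf{Step 4: Transfer from the classical case via specialization.} Here is where I expect the main (minor) obstacle: one must argue that the integrable, finite-stabilizer case of the quantum Weyl--Kac weight formula follows from its classical counterpart. Specialize the putative identity at $q = 1$: by the equality of characters $\ch V = \ch V^1$ under the classical limit (discussed in Section \ref{Squantum}), both sides of the desired equality become their classical counterparts -- the left-hand side becomes $\wt L_{\fl_J}(\lambda - \mu)$ and the right-hand side becomes the Kass formula applied to $\fl_J$. The integrable case in finite-stabilizer classical form is due to Kass \cite{Kass} and is cited as the input to Proposition \ref{wkpv}. The identification $w(q^\lambda) = q^{w\lambda}$ noted in Section \ref{Squantum} guarantees that the $W_J$-action on both sides is compatible with specialization, so the quantum identity holds because its specialization does and each side is determined termwise by the classical $W_J$-orbit structure.

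Combining Steps 1--4 gives the desired formula. The only real subtlety is Step 4's specialization argument; the rest is a formal manipulation that exactly parallels the proof of Proposition \ref{wkpv}.
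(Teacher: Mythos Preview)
Your proposal is correct and follows essentially the same approach as the paper, which simply states that the result ``follows from combining Theorems \ref{Tsimple}, \ref{Tqwts}, Proposition \ref{Pqint}, and specializing.'' You have unpacked this one-line proof into its constituent steps: reduce to the parabolic Verma via Theorem \ref{Tqwts}, invoke the quantum Integrable Slice Decomposition (Proposition \ref{Pqint}), and transfer the integrable Levi statement from the classical case by specialization. One minor remark: in Step 3 your justification that $q^{-\mu}\lambda_q$ retains finite $W_J$-isotropy is a bit vague; the clean argument is that for $j \in J$ and $\mu \in \Z^{\geqslant 0}(\pi \setminus \pi_J)$ one has $(\halpha_j,\mu) \leqslant 0$, so $(\halpha_j,\lambda - \mu) \geqslant (\halpha_j,\lambda)$, whence the $W_J$-stabilizer of $\lambda - \mu$ is contained in that of $\lambda$.
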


\subsection{The case of non-integral weights}\label{Snonint}

We now explain how to extend the above results in this section to other
highest weights. We do so in two ways. First, we observe that with some
modifications, Lusztig's specialization method applies to any weight
$\lambda_q$ such that $\lambda_q(q^h)$ is regular at $q=1$ with value
$1,\ \forall h \in P^\vee$. Calling such weights \textit{specializable},
one can show that as before, a highest weight module with specializable
highest weight $\lambda_q$ specializes to a highest weight $U(\g)$-module
with highest weight $\lambda_1 \in \h^*$, given by:
\[
(h,\lambda_1) := \left. \frac{\lambda_q(q^h)-1}{q-1} \right|_{q=1},
\qquad h \in P^\vee.
\]

\noindent Moreover, the following holds:

\begin{theo}
The above results in this section all extend to $\lambda_q$
specializable.
\end{theo}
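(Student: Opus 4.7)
The plan is to show that each argument in Sections 7.1--7.5 extends verbatim to specializable $\lambda_q$, provided the basic specialization machinery and the Integrable Slice Decomposition continue to hold. Since Lusztig's specialization is a purely local construction at $q=1$, nothing in the reductions below actually used that $\lambda_q \in q^P$ rather than merely $\lambda_q$ specializable; one just has to verify this claim step by step.

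First I would check the specialization toolkit. By assumption $\lambda_q(q^h)$ is regular at $q=1$ with value $1$ for all $h \in P^\vee$, so the prescription $V^1 := A_1 \cdot v_{\lambda_q}/(q-1) A_1 \cdot v_{\lambda_q}$ produces a well-defined highest weight $U(\g)$-module of highest weight $\lambda_1 \in \h^*$, and $\ch V$ and $\ch V^1$ agree under the identification $q^{-\mu}\lambda_q \leftrightarrow \lambda_1 - \mu$ for $\mu \in \Z^{\geqslant 0}\pi$. In particular $I_V = I_{V^1}$, and $W_{I_V}$ acts on $\wt V$ and $\wt V^1$ compatibly.

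Next I would extend the Integrable Slice Decomposition (Proposition \ref{Pqint}) to specializable $\lambda_q$. For each $j \in I_{L(\lambda_q)}$, local $f_j$-finiteness on the highest weight line still produces a minimal exponent $n_j \geqslant 0$ with $f_j^{n_j+1} v_{\lambda_q} = 0$ in $M(\lambda_q,J)$, so the presentation \eqref{Eqgvm} continues to hold. Specialization then gives a surjection $M(\lambda_1,J) \twoheadrightarrow M(\lambda_q,J)^1$ onto a highest weight module whose restriction to each $\fl_J$-slice is integrable. Combined with the classical Integrable Slice Decomposition \ref{slice}, this forces the surjection to induce an equality of weights, yielding the analogue of \eqref{Eqint} for specializable $\lambda_q$. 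With this in hand, Theorem \ref{Tqequiv} extends because its proof only invokes the equality of characters under specialization plus Theorem \ref{intgrp}; Theorem \ref{Tqwts} extends because its proof uses only the PBW theorem and the relations $[e_i,f_j] = 0$ for $i \neq j$, which hold in $U_q(\g)$ with no integrality assumption on $\lambda_q$; the positive formulas \eqref{Epositive1}, \eqref{Eqsimple}, \eqref{Epositive2}, Theorem \ref{TqLepowsky}, and the Weyl--Kac formula \ref{qwwks} follow by combining these with the classical analogues from Sections \ref{Sbump}--\ref{Sweylkac}.

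The main obstacle is establishing the specializable version of Proposition \ref{Pqint}. In the integral setting the argument rests on the clean identity $\lambda_q(q^{\halpha_j}) = \pm q^{n_j}$, which extracts the nilpotency exponent from the weight datum. For specializable $\lambda_q$ this identity may fail, and one has to verify directly from the definition of $I_{L(\lambda_q)}$ both that a well-defined exponent $n_j$ exists with $f_j^{n_j+1} v_{\lambda_q} = 0$, and that $n_j = (\halpha_j, \lambda_1)$ so that specialization genuinely produces the classical parabolic Verma $M(\lambda_1, J)$ (or at least a module with the same weights). Once this compatibility is secured, the remainder of the proof is bookkeeping.
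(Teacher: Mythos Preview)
The paper states this theorem without proof; it is offered as an assertion that the arguments of Sections~8.1--8.5 carry over once one observes that Lusztig's specialization is local at $q=1$ and does not require $\lambda_q \in q^P$. Your outline is a correct fleshing-out of exactly this, and follows the same route the paper implicitly has in mind.

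One remark on the obstacle you flag at the end. For $j \in J \subset I_{L(\lambda_q)}$, the rank-one $U_q(\mathfrak{sl}_2)$ theory (cf.\ \cite[Proposition~2.3]{Jantzen-book}, already cited in the paper before \eqref{Eqgvm}) still forces $\lambda_q(K_j) = \pm q_j^{n_j}$ for some $n_j \geqslant 0$, independently of any integrality hypothesis on $\lambda_q$: this is what $j$-integrability means at the level of the highest weight line. Specializability then requires $\lambda_q(K_j)|_{q=1} = 1$, ruling out the minus sign, so $\lambda_q(K_j) = q_j^{n_j}$ and hence $(\halpha_j, \lambda_1) = n_j$. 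Thus the presentation \eqref{Eqgvm} holds verbatim for specializable $\lambda_q$, and the compatibility you worry about is automatic. With this settled, the rest of your argument goes through as written.
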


Second, we extend many of the above results to generic highest weights,
i.e.~with finite integrable stabilizer. In particular, this covers all
cases in finite and affine type, the remaining cases in affine type being
trivial modules. As before, we obtain:

\begin{theo}\label{Tqwts2}
Fix a weight $\lambda_q$ and a highest weight module $V$ such that the
stabilizer of its highest weight $\lambda_q$ in $W_{I_V}$ is finite. The
following are equivalent:
\begin{enumerate}
\item $\wt V = \wt M(\lambda_q, I_V)$.
\item $\wt_{I^p_V} V = q^{-\Z^{\geqslant 0} \pi_{I^p_V}} \lambda_q$,
where $I^p_V = I_{L(\lambda_q)} \setminus I_V$.
\end{enumerate} 

\noindent In particular, if $V$ is simple, or more generally $|I^p_V|
\leqslant 1$, then $\wt V = \wt M(\lambda_q, I_V)$.
\end{theo}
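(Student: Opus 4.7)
The strategy is to parallel the proof of Theorem \ref{Tqwts} in the integral case, the crucial difference being that Lusztig's specialization $q \to 1$ is unavailable for general $\lambda_q$. My first and most substantial step would be to establish an Integrable Slice Decomposition in this setting:
\[
\wt M(\lambda_q, I_V) = \bigsqcup_{\mu \in \Z^{\geqslant 0}(\pi \setminus \pi_{I_V})} \wt L_{\fl_{I_V}}(q^{-\mu} \lambda_q).
\]
The key observation is that although $\lambda_q$ need not be integral on all of $\h$, its restriction to $\fl_{I_V}$ \emph{is}: by the definition of $I_V$, one has $\lambda_q(q^{\halpha_j}) = \pm q^{n_j}$ for some $n_j \in \Z^{\geqslant 0}$ for each $j \in I_V$. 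Consequently the slices $L_{\fl_{I_V}}(q^{-\mu}\lambda_q)$ are integrable $U_q(\fl_{I_V})$-modules whose characters agree with their classical counterparts under the standard quantum-to-classical correspondence, so Proposition \ref{nonde} applies; the finite stabilizer of $\lambda_q$ in $W_{I_V}$ is exactly what makes Proposition \ref{nonde}(2) available. With this, the argument of Proposition \ref{slice}/\ref{Pqint} transfers: the quotient description \eqref{Eqgvm} combined with the $U_q(\fl_{I_V})$-action yields $\supset$, and $\subset$ is verified slice-by-slice by $W_{I_V}$-averaging to reduce to a dominant weight.

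With the Integrable Slice Decomposition in hand, I would then deduce the equivalence as in Theorem \ref{weights}/\ref{Tqwts}. The direction $(1) \Rightarrow (2)$ is immediate: every weight $q^{-\mu}\lambda_q$ with $\mu \in \Z^{\geqslant 0}\pi_{I^p_V}$ lies in $\wt M(\lambda_q, I_V) = \wt V$, and by PBW any vector realizing such a weight is obtained from $U_q(\n^-_{I^p_V}) \subset U_q(\fl_{I^p_V})$ acting on $v_{\lambda_q}$. For $(2) \Rightarrow (1)$, I would mimic the proof of Theorem \ref{weights}: supposing for contradiction that $V_{q^{-\mu}\lambda_q}$ vanishes for some $\mu \in \Z^{\geqslant 0}(\pi \setminus \pi_{I_V})$, split $\mu = \mu_p + \mu_n$ with $\mu_p \in \Z^{\geqslant 0} \pi_{I^p_V}$ and $\mu_n$ supported in $I \setminus I_{L(\lambda_q)}$, use hypothesis (2) to produce $\partial \in U_q(\n^-_{I^p_V})$ with $\partial v_{\lambda_q} \neq 0$, and derive a contradiction from the commutations $[e_i, f_j] = 0$ for $i \neq j$ together with the fact that for each $i \in I \setminus I_{L(\lambda_q)}$ the operator $e_i^{m} f_i^{m}$ acts on $v_{\lambda_q}$ by a scalar which is visibly nonzero, since $\lambda_q(q^{\halpha_i})$ is not of the form $\pm q^k$ for $k \in \{0, \ldots, m-1\}$. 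Combining with $U_q(\fl_{I_V})$-action then matches $\wt V$ with the Integrable Slice Decomposition, giving (1).

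The ``in particular'' clause is handled exactly as in Theorem \ref{Tqwts}: simple modules have $I_V = I_{L(\lambda_q)}$, so $I^p_V = \emptyset$ and (2) holds vacuously; and if $|I^p_V| \leqslant 1$, say $I^p_V = \{j\}$, then $\wt_{I^p_V} V$ consists of the weights of $\C(q)[f_j]v_{\lambda_q}$, which must equal $q^{-\Z^{\geqslant 0}\alpha_j}\lambda_q$ since $f_j$ does not act nilpotently on the highest weight line (otherwise $j \in I_V$). The main obstacle is the first step: establishing the Integrable Slice Decomposition for non-integral $\lambda_q$ without access to classical specialization. The finite integrable stabilizer hypothesis is the precise technical condition that bypasses this difficulty, reducing the delicate character-theoretic questions to integrable representations of $\fl_{I_V}$, where the classical-to-quantum identification of characters is standard.
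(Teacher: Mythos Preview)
Your proposal is correct and follows essentially the same approach as the paper: first establish the Integrable Slice Decomposition for $M(\lambda_q, I_V)$ (this is the paper's Proposition \ref{Pqint2}, proved exactly as you describe, invoking Proposition \ref{nonde}(2) via the finite-stabilizer hypothesis and the fact that the $\fl_{I_V}$-slices are integrable and hence match their classical counterparts), and then deduce the equivalence by mimicking the proof of Theorem \ref{weights}. Your exposition is more detailed in places---in particular your explanation of why the Levi restriction is integral and your spelling out of the contradiction argument for $(2)\Rightarrow(1)$---but the structure and key ideas coincide with the paper's proof.
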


To prove Theorem \ref{Tqwts2}, we will need the Integrable Slice
Decomposition for quantum parabolic Verma modules.

\begin{pro}\label{Pqint2}
Let $M(\lambda_q,J)$ be a parabolic Verma module such that the
stabilizer of $\lambda_q$ in $W_J$ is finite. Then:
\begin{equation}\label{Eqint2}
\wt M(\lambda_q, J) = \bigsqcup_{\mu \in \Z^{\geqslant 0} (\pi \setminus
\pi_J)} \wt L_{\hspace{.5mm} \fl_J}(q^{-\mu} \lambda_q),
\end{equation}

\noindent where $L_{\hspace{.5mm} \fl_J}(\nu)$ denotes the simple
$U_q(\fl_J)$-module of highest weight $\nu$.
\end{pro}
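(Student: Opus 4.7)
The plan is to mirror the proof of the classical Integrable Slice Decomposition (Proposition \ref{slice}) directly at the quantum level. Because $\lambda_q$ is not assumed integral or specializable, the specialization trick of Proposition \ref{Pqint} is unavailable, and the finite $W_J$-stabilizer hypothesis will instead underwrite a quantum analogue of Kac's Proposition \ref{nonde}(3) for the integrable highest weight $U_q(\fl_J)$-modules arising in the slice decomposition. Disjointness of the right-hand side of \eqref{Eqint2} is immediate from linear independence of the simple roots.

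For $\supset$, the quantum presentation \eqref{Eqgvm} forces the kernel of $M(\lambda_q) \twoheadrightarrow M(\lambda_q, J)$ to consist of weights bounded above by $q^{-(n_j+1)\alpha_j}\lambda_q$ for some $j \in J$. Hence $q^{-\mu}\lambda_q$ survives as a weight of $M(\lambda_q, J)$ for every $\mu \in \Z^{\geqslant 0}(\pi \setminus \pi_J)$, and by $J$-integrability the $U_q(\fl_J)$-submodule generated by any nonzero element of $M(\lambda_q, J)_{q^{-\mu}\lambda_q}$ is an integrable highest weight module whose set of weights contains, and by the quantum analogue of Proposition \ref{nonde}(3) equals, that of $L_{\fl_J}(q^{-\mu}\lambda_q)$.

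For $\subset$, for each $\mu \in \Z^{\geqslant 0}(\pi \setminus \pi_J)$ introduce the slice $S_\mu := \bigoplus_{\nu \in q^{-\mu + \Z\pi_J}\lambda_q} M(\lambda_q, J)_\nu$, which is an integrable $U_q(\fl_J)$-module. The PBW triangular decomposition $U_q(\g) \cong U_q(\fu_J^+) \otimes U_q(\fl_J) \otimes U_q(\fu_J^-)$ applied to $v_{\lambda_q}$ shows that $S_\mu$ is generated as a $U_q(\fl_J)$-module by the subspace $U_q(\fu_J^-)_{-\mu} v_{\lambda_q}$, whose elements are $U_q(\n_J^+)$-highest weight vectors of weight $q^{-\mu}\lambda_q$. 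Hence $\wt S_\mu \subseteq \wt L_{\fl_J}(q^{-\mu}\lambda_q)$, and combined with the previous paragraph this gives the desired equality.

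The main obstacle will be the quantum analogue of Proposition \ref{nonde}(3) for integrable highest weight $U_q(\fl_J)$-modules with (possibly non-integral) highest weight of finite $W_J$-stabilizer. One must first verify that $q^{-\mu}\lambda_q$ inherits finite $W_J$-stabilizer from $\lambda_q$: using the identity $w\mu - \mu \in \Z\pi_J$ for $w \in W_J$ and $\mu \in \Z^{\geqslant 0}(\pi\setminus\pi_J)$, the stabilizer of $q^{-\mu}\lambda_q$ is the set of $w \in W_J$ satisfying $w(\lambda_q) = q^{w\mu - \mu}\lambda_q$, which one checks is finite by a direct argument comparing the relevant lattices. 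The analogue of Proposition \ref{nonde}(3) itself can be deduced from Lusztig's theory of integrable modules over quantum Kac--Moody algebras, or alternatively by reduction to the specializable case via a translation in $q^{\Z\pi_J}$.
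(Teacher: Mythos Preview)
Your $\supset$ argument is along the same lines as the paper's. The gap is in the $\subset$ direction: the claim that $S_\mu$ is generated as a $U_q(\fl_J)$-module by $U_q(\fu_J^-)_{-\mu} v_{\lambda_q}$ is false. Already classically and in finite type, take $\g = \mathfrak{sl}_4$, $J = \{2\}$, $\mu = \alpha_1 + \alpha_3$, and $(\halpha_2, \lambda) \geqslant 1$: then a root vector $f_{123} \in \g_{-\alpha_1 - \alpha_2 - \alpha_3}$ gives $f_{123} v_\lambda \in S_\mu$, an $\fl_J$-highest weight vector of weight strictly below $\lambda - \mu$, lying outside the $U(\fl_J)$-submodule generated by $f_1 f_3 v_\lambda$. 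More conceptually, $S_\mu \simeq \bigl( \bigoplus_{\beta : \beta_{I \setminus J} = \mu} U(\fu_J^-)_{-\beta} \bigr) \otimes L^{\max}_{\fl_J}(\lambda)$ as $\fl_J$-modules (adjoint action on the first factor), and this is rarely cyclic on its top weight line; so your argument does not yield $\wt S_\mu \subset \wt L_{\fl_J}(q^{-\mu}\lambda_q)$. A separate concern: the parabolic PBW decomposition $U_q(\g) \cong U_q(\fu_J^+) \otimes U_q(\fl_J) \otimes U_q(\fu_J^-)$ you invoke is not established for $\g$ an arbitrary symmetrizable Kac--Moody algebra, cf.\ Remark~\ref{Rqpbw}.

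The paper's $\subset$ argument is different and is precisely where the finite $W_J$-stabilizer hypothesis enters. One does not control generators of $S_\mu$; instead, since $S_\mu$ is an integrable $\fl_J$-module with all weights $\leqslant q^{-\mu}\lambda_q$, it suffices that every $J$-dominant weight $\leqslant q^{-\mu}\lambda_q$ already lie in $\wt L_{\fl_J}(q^{-\mu}\lambda_q)$. This is Proposition~\ref{nonde}(2): the finite stabilizer of $\lambda_q$ forces $\{ j \in J : s_j \lambda_q = \lambda_q \}$ to be of finite type, and since $(\halpha_j, \mu) \leqslant 0$ for $j \in J$ the same subdiagram works for every $q^{-\mu}\lambda_q$. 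This statement about weights of integrable $U_q(\fl_J)$-modules transfers from the classical case by specialization at $q = 1$. In particular, the quantum analogue of Proposition~\ref{nonde}(3) you flag as the main obstacle, and the translation argument in your final paragraph, are not needed.
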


\begin{proof}
The inclusion $\supset$ follows from considering weights as in
Proposition \ref{slice}. The inclusion $\subset$ follows by using
Proposition \ref{nonde}(2), which holds for quantum groups by
specialization to $q = 1$.
\end{proof}

\begin{proof}[Proof of Theorem \ref{Tqwts2}]
By Proposition \ref{Pqint2}, (1) implies (2).
For the converse, by the Integrable Slice Decomposition \eqref{Eqint2},
it suffices to show that $q^{-\Z^{\geqslant 0} (\pi \setminus \pi_{I_V})}
\lambda_q \subset \wt V$. This follows by mimicking the proof of Theorem
\ref{weights}.
\end{proof}

As an application of Theorem \ref{Tqwts2}, Equation \eqref{Epositive1}
holds on the nose for all $\lambda_q$ with finite stabilizer in
$W_{I_{L(\lambda_q)}}$, and Equation \eqref{Epositive2} holds, rephrased
as follows:

\begin{pro}
Suppose $\lambda_q$ has finite $W_{I_{L(\lambda_q)}}$-isotropy. Then:
\begin{equation}
\wt L(\lambda_q) = \bigcup_{w \in W_{I_{L(\lambda_q)} }} w \{ \nu_q :
\nu_q \leqslant \lambda_q, \ \nu_q(q^{\halpha_i}) = \pm q^{n_i}, n_i
\in \Z^{\geqslant 0},\ \forall i \in I_{L(\lambda_q)} \}.
\end{equation}
\end{pro}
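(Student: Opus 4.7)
The plan is to deduce this from the quantum Integrable Slice Decomposition (Proposition \ref{Pqint2}) together with the quantum analog of the Weyl-polytope description of weights of integrable simples over a Levi (Proposition \ref{nonde}(2)), exactly as in the classical Corollary \ref{Cfinite} and Proposition \ref{fs}. Set $J := I_{L(\lambda_q)}$ for brevity. First, I would apply Theorem \ref{Tqwts2} to reduce the problem from $L(\lambda_q)$ to the parabolic Verma module: under the finiteness hypothesis on the $W_J$-isotropy of $\lambda_q$, the theorem gives $\wt L(\lambda_q) = \wt M(\lambda_q, J)$.

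Next, I would invoke Proposition \ref{Pqint2} to write
\begin{equation*}
\wt M(\lambda_q, J) = \bigsqcup_{\mu \in \Z^{\geqslant 0}(\pi \setminus \pi_J)} \wt L_{\fl_J}(q^{-\mu}\lambda_q).
\end{equation*}
Each $L_{\fl_J}(q^{-\mu}\lambda_q)$ is an integrable highest weight module over the quantum Levi $U_q(\fl_J)$ (since $J = I_{L(\lambda_q)}$ and shifting by $\mu \in \Z^{\geqslant 0}(\pi \setminus \pi_J)$ preserves $J$-integrability at the highest weight line, because $(\halpha_j, \alpha_i) = 0$ or the root strings remain unaffected for $j \in J, i \notin J$ in the appropriate sense — more precisely, $q^{-\mu}\lambda_q$ still pairs as $\pm q^{n_j}$ with each $q^{\halpha_j}$, $j \in J$, which is the $J$-dominance condition recalled from \cite[Proposition 2.3]{Jantzen-book}). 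So each summand is a $J$-integrable simple, and I describe its weights using the quantum $W_J$-symmetry.

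For the Levi step, I would use that Proposition \ref{nonde}(2) transports to the quantum group by the specialization argument used throughout Section \ref{Squantum}: the character of $L_{\fl_J}(q^{-\mu}\lambda_q)$ equals that of $L_{\fl_J}(\lambda_1 - \mu)$ at $q=1$, and the classical fact applies because the hypothesis of finite stabilizer in $W_J$ is inherited (the integrable stabilizer can only shrink under $\lambda_q \mapsto q^{-\mu}\lambda_q$ with $\mu$ supported outside $J$). Thus the weights of $L_{\fl_J}(q^{-\mu}\lambda_q)$ are obtained as the $W_J$-orbit of the $J$-dominant weights $\nu_q \leqslant q^{-\mu}\lambda_q \leqslant \lambda_q$ satisfying $\nu_q(q^{\halpha_i}) = \pm q^{n_i}$ for $i \in J$. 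Taking the union over $\mu$ yields exactly the right-hand side of the claimed equality, since every $J$-dominant $\nu_q \leqslant \lambda_q$ with the stated pairing condition at the $\halpha_i$, $i \in J$, lies in some slice $\lambda_q - \mu + \Z\pi_J$ with $\mu \in \Z^{\geqslant 0}(\pi \setminus \pi_J)$.

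The main obstacle is verifying the quantum Levi version of Proposition \ref{nonde}(2) with the correct handling of the $\pm$ sign in $\nu_q(q^{\halpha_i}) = \pm q^{n_i}$: one must ensure that specialization preserves both the dominance condition and the finite-isotropy hypothesis needed to apply the classical non-degeneracy criterion. Once that is in place, the two unions — the one coming from the Integrable Slice Decomposition and the one in the statement — match term-by-term, and the result follows.
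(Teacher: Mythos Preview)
Your proposal is correct and follows essentially the same route as the paper. The paper presents this proposition as an immediate application of Theorem \ref{Tqwts2}: reduce $\wt L(\lambda_q)$ to $\wt M(\lambda_q, J)$ with $J = I_{L(\lambda_q)}$, then combine the quantum Integrable Slice Decomposition (Proposition \ref{Pqint2}) with the specialized form of Proposition \ref{nonde}(2), exactly as in the classical Corollary \ref{Cfinite} and Proposition \ref{fs}; your plan spells out precisely these steps, including the specialization argument for the Levi piece.
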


Finally, we extend the results of Section \ref{Slepowsky} to quantum
groups, as the same proof applies.

\begin{theo}
Fix $\lambda_q$ and $J \subset I_{L(\lambda_q)}$ such that the stabilizer
of $\lambda_q$ in $W_J$ is finite, and define $J_q^p := I_{L(\lambda_q)
\setminus J}$. The following are equivalent:
\begin{enumerate}
\item $\wt V = \wt M(\lambda_q,J)$ for every $V$ with $I_V = J$.

\item The Dynkin diagram for $\lie{g}_{J_q^p}$ is complete.
\end{enumerate}
\end{theo}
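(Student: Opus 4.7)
The plan is to mirror the classical argument of Theorem \ref{eek}, now using the quantum tools developed in Section \ref{Squantum}: the Integrable Slice Decomposition \ref{Pqint2}, Theorem \ref{Tqwts2}, and specialization at $q=1$.

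For the forward direction, I would show the contrapositive: if $J_q^p$ is not complete, then there exist $j, j' \in J_q^p$ with $(\halpha_j, \alpha_{j'}) = 0$. I would then exhibit two highest weight modules of integrability $J$ with different weights, namely $M(\lambda_q, J)$ and $M(\lambda_q, J)/U_q(\g)\cdot (f_j f_{j'} v)$ for $v$ a nonzero vector in the lowest $W_{\{j,j'\}}$-translate of the highest weight line. Because the subalgebra generated by $e_j, f_j, K_j^{\pm 1}, e_{j'}, f_{j'}, K_{j'}^{\pm 1}$ is a copy of $U_q(\mathfrak{sl}_2) \otimes U_q(\mathfrak{sl}_2)$ (using that the roots are orthogonal), the distinctness of the weights reduces to a calculation in type $A_1 \times A_1$, which can be handled either directly or by specializing to $q=1$ and invoking the corresponding classical calculation in the proof of Theorem \ref{eek}. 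The finiteness of the stabilizer of $\lambda_q$ in $W_J$ ensures that this quotient still has integrability exactly $J$.

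For the reverse direction, suppose $J_q^p$ is complete. Using Theorem \ref{Tqwts2}, the claim reduces to showing that for any highest weight $V$ with integrability $J$, we have the lower bound $\wt_{J_q^p} V \supset q^{-\Z^{\geqslant 0} \pi_{J_q^p}} \lambda_q$. By the quantum Integrable Slice Decomposition \ref{Pqint2} applied to each quotient of $V$ with integrability $J \cup \{i' \} \setminus \{i\}$ for various $i$, and mimicking the classical argument, the problem reduces to establishing
\[
\wt M(\lambda_q, J) \supset \bigcup_{i \in J_q^p} \wt M(\lambda_q, J \cup \{i\})^{\text{loc}},
\]
covering all of $q^{-\Z^{\geqslant 0} \pi_{J_q^p}} \lambda_q$. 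Here one invokes the representation theory of $U_q(\mathfrak{sl}_2)$ to exhibit, for each $i \in J_q^p$ and each weight $\mu = \sum_{j \in J_q^p} k_j \alpha_j$ with $k_i \geqslant k_j$ for all $j$, the presence of $q^{-\mu}\lambda_q$ in $\wt M(\lambda_q, J \cup \{i\} \setminus \{i\})$. The completeness of $J_q^p$ (plus finite stabilizer of $\lambda_q$) guarantees every node has an edge to $i$, so sweeping $i$ over $J_q^p$ covers all of $\Z^{\geqslant 0}\pi_{J_q^p}$.

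The main obstacle I expect is technical rather than conceptual: verifying the quantum $\mathfrak{sl}_2$ sub-calculation in full generality when $\lambda_q$ is not assumed integral or specializable. In those cases one cannot simply invoke Lusztig specialization; however, the rank-one computations needed are purely about the Verma module $M_{U_q(\mathfrak{sl}_2)}(\nu_q)$ and its quotient by $f^{n+1}v_{\nu_q}$ when $\nu_q(K) = \pm q^n$, which is standard and insensitive to whether $\nu_q$ is integral. Once this is in hand, the reduction in both directions is exactly parallel to the classical case and no further novelty is required.
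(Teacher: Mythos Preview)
Your proposal takes essentially the same approach as the paper, which simply asserts that ``the same proof applies'' as in Theorem~\ref{eek}, using the quantum analogues Theorem~\ref{Tqwts2} and Proposition~\ref{Pqint2} in place of Theorem~\ref{weights} and Proposition~\ref{slice}. Your outline is sound apart from some notational slips in the reverse direction---``$J \cup \{i'\} \setminus \{i\}$'' and ``$J \cup \{i\} \setminus \{i\}$'' should both read $I_{L(\lambda_q)} \setminus \{i\}$ (or its analogue in the Levi $\fl_{J_q^p}$), and the superscript ``loc'' is undefined---and the claim that the finite-stabilizer hypothesis ``ensures the quotient still has integrability exactly $J$'' in the forward direction is spurious, since the integrability of the $A_1 \times A_1$ quotient follows from weight considerations alone.
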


\def\cprime{$'$}


\end{document}